%
%


%

\documentclass[10pt,reqno]{amsart}
\usepackage{hyperref}\hypersetup{colorlinks=true, citecolor=blue}
\usepackage{graphicx}
\usepackage{xfrac}
\usepackage{esint,amsfonts,amsmath,amssymb,epsfig,mathrsfs,bm}
\usepackage{ifthen,color,mathtools}
%

%
%

\newtheorem{theorem}{Theorem}[section]
\newtheorem{lemma}[theorem]{Lemma}
\newtheorem{proposition}[theorem]{Proposition}
\newtheorem{corollary}[theorem]{Corollary}

\theoremstyle{definition}
\newtheorem{definition}[theorem]{Definition}

\theoremstyle{remark}
\newtheorem{remark}[theorem]{Remark}
\theoremstyle{remark}

\numberwithin{equation}{section}

%
%

\newcommand{\R}{\mathbb{R}}
\newcommand{\eps}{{\varepsilon}}

\newcommand{\dist}{{\textup {dist}}}

\newcommand{\de}{\partial}

\renewcommand{\d}{{\rm d}}
\newcommand{\Id}{{\rm Id}}
\newcommand{\spt}{{\rm spt}\,}

\newcommand{\loc}{\textup{loc}}

\renewcommand{\and}{\quad \text{and} \quad}

\renewcommand{\div}{\textup{div}}

\renewcommand{\top}{\textup{top}}
\newcommand{\low}{\textup{low}}

\newcommand{\cL}{{\mathcal{L}}}
\newcommand{\cH}{{\mathcal{H}}}

\newcommand{\LL}{\mathbb{A}^{\sfrac12}}

\newcommand\N{{\mathbb N}}

\newcommand{\ie}{\textit{i.e.}}

\newcommand{\mA}{\mathbb{A}}
\newcommand{\mB}{\mathbb{B}}
\newcommand{\PPhi}{\Phi_{x_0}}

\newcommand{\Ico}{\mathscr{I}_u}
\newcommand{\Hco}{\mathscr{H}_u}
\newcommand{\Eco}{\mathscr{E}_u}
\newcommand{\Tr}{\operatorname{Tr}}
\newcommand\MF[1]{{\color{blue} #1}}

\title[Thin obstacles problems with Sobolev variable coefficients]
{On the free boundary for thin obstacle problems with Sobolev variable coefficients}

\author[G.~Andreucci]{Giovanna Andreucci}
\address{Dipartimento di Matematica, Sapienza Universit\`a di Roma}
\curraddr{P.le Aldo Moro 5, I-00185 Roma}
\email{giovanna.andreucci@uniroma1.it}
\author[M.~Focardi]{Matteo Focardi}
\address{DiMaI, Universit\`a degli Studi di Firenze}
\curraddr{Viale Morgagni 67/A, I-50134 Firenze}
\email{matteo.focardi@unifi.it}
\author[E.~Spadaro]{Emanuele Spadaro}
\address{Dipartimento di Matematica, Sapienza Universit\`a di Roma}
\curraddr{P.le Aldo Moro 5, I-00185 Roma}
\email{spadaro@mat.uniroma1.it}
\thanks{E.~S.~has been supported the ERC-STG Grant n. 759229 HiCoS “Higher Co-dimension Singularities: Minimal Surfaces and the Thin Obstacle Problem”. M.~F. is a member of GNAMPA.}


\keywords{Thin obstacle, Sobolev coefficients, free boundary, rectifiability}

\date{}
\begin{document}
\begin{abstract}
We establish a quasi-monotonicity formula {for an intrinsic frequency function related to solutions to} thin obstacle problems with zero obstacle driven by quadratic energies with Sobolev $W^{1,p}$ coefficients, with $p$ bigger than the space dimension.
From this we deduce several regularity and structural properties of the corresponding free boundaries at those distinguished points with finite order of contact with the obstacle.
In particular, we prove the rectifiability {and the local finiteness of the Minkowski content} of the whole free boundary in the case of Lipschitz coefficients.
\end{abstract}

\maketitle

%
%
\section{Introduction}

In this article we consider a class of
lower dimensional obstacle problems with variable coefficients which has been extensively considered in the literature.
In order to state the results, we introduce the following notation:
for any subset $E \subset \R^{n+1}$ we set
\[
E^+ := E\cap\big\{x\in\R^{n+1}:\,x_{n+1}>0\big\}
\and
E' := E \cap \big\{x_{n+1}=0\big\}.
\]
For any point $x \in \R^{n+1}$ we will write $x = (x', x_{n+1})\in \R^{n} 
\times \R$.
Moreover, $B_r(x) \subset \R^{n+1}$ denotes the open ball centered
at $x\in \R^{n+1}$ with radius $r>0$, and $\overline{B_r}(x)$ its closure
(we omit to write the point $x$ if the origin).

\medskip

We consider the problem of minimizing a variable coefficient quadratic (\textit{Dirichlet}) energy with an unilateral constraint:
\begin{equation}\label{e:minimization}
\min_{v\in\mathscr{A}
} \int_{B_1^+} \langle\mA(x)\nabla v,\nabla v\rangle \d x,
\end{equation}
where the class of competing functions is given by 
$$
\mathscr{A}:=\Big\{v\in H^1(B_1^+):\,v = g \;\textup{in }\,(\de B_1)^+
\and v\geq 0 \;\textup{in }\,B_1'\Big\},
$$
with $g\in H^{\frac12}((\de B_1)^+)$ such that $\mathcal{A}$ is not empty (the boundary conditions are meant in the sense of traces).
We assume the following hypotheses: 
\begin{itemize}
 \item[(H1)] $\mA\in W^{1,p}(B_1,\R^{(n+1)\times (n+1)})$, $p\in (n+1,\infty]$ ({in particular, by Morrey's embedding, we have $\mA\in C^{0,\alpha}(B_1,\R^{(n+1)\times (n+1)})$, 
$\alpha:=1-\frac{n+1}p$});
 \item[(H2)]  {$\mA(x)=(a_{ij}(x))_{i,j=1}^{n+1}$ is a symmetric, bounded and coercive matrix, 
i.e. for every $x\in B_1^+$, $i,j\in\{1,\ldots,n+1\}$, and $\xi\in\R^{n+1}$
$a_{ij}(x)=a_{ji}(x)$, and}
 \[
 \lambda|\xi|^2\leq  \langle\mA(x)\xi,\xi\rangle\leq\Lambda|\xi|^2\,,
 \]
for some $0<\lambda\leq\Lambda$,
\end{itemize}

\medskip

By a well-known result due to Ural'tseva \cite{Ural87}, the minimizers of \eqref{e:minimization} are $C^{1,\beta}$ regular for some $\beta>0$ and satisfy an elliptic partial differential equation in the half ball $B_1^+$, whereas on the flat part of the boundary $B_1'$ the unilateral constraint $u\geq 0$ leads to a free boundary problem. The Euler--Lagrange equation of \eqref{e:minimization} is
\begin{equation}\label{e:ob-pb local intro}
\begin{cases}
\div\big(\mA(x)\nabla u(x)\big) = 0  & \text{in }\; 
B_1^+,\\
u \geq 0, \quad \mA\nabla u\cdot e_{n+1}\leq 0,\quad
u(\mA\nabla u\cdot e_{n+1})=0   & \text{in }\; B_1'.
\end{cases}
\end{equation}
The condition $u(\mA\nabla u\cdot e_{n+1})=0$ in $B_1'$ is the so called \textit{Signorini complementary} boundary condition.
The behaviour of $u$ on $B_1'$ is not prescribed and is characterized by the so called \textit{free boundary} $\Gamma(u)$,
which is the relative boundary in $B_1'$ of the contact set ${B_1'\cap}\{u=0\}$ where the solution saturates the constraint.

\medskip

This problem has been widely studied in the last decades and it has become
a very active field of research after the seminal papers
by Athanasopoulos and Caffarelli~\cite{AtCa04} and Athanasopoulos, Caffarelli and Salsa~\cite{ACS08}.
The key idea introduced in \cite{ACS08} is the use of \textit{Almgren's frequency function}
in the study of both the regularity of the solution $u$ and the properties of the free boundary
$\Gamma(u)$.
This has been the turning point for a long series of results for the constant coefficient case,
{leading to a detailed analysis of the free boundary (see, e.g.,  \cite{ACS08,CSV20,DS15,FoSp16,FS18-1,FS18-1bis,GP09,JP,KPS15} and the references therein).}
The regularity of the solutions of the variable coefficients case has been
considered since the works of Caffarelli~\cite{C79} and Kinderlehrer~\cite{K81}
in the case of smooth coefficients, whereas the problem \eqref{e:minimization} with Sobolev coefficients has been considered by Ural'tseva in \cite{Ural85,Ural86,Ural87}.
The optimal regularity of the solutions and the regularity of a subset of the free boundary (points with selected orders of contact)
have been proven more recently by  Garofalo and Smit Vega Garcia \cite{GS14}, and Garofalo, Petrosyan and Smit Vega Garcia in \cite{GPS16}, respectively, for Lipschitz coefficients
using a generalization of the frequency function {(see also \cite{GPS18} for further results in the Lipschitz setting)}. In case of Sobolev coefficients both topics have been addressed by Koch, R\"uland and Shi~\cite{KRS16,KRS17} by means of Carleman inequalities,
the optimal regularity of solutions is established by R\"uland and Shi~\cite{RS17} for H\"older coefficients, and for a more general notion of quasi-minimizers by Jeon, Petrosyan and Smit Vega Garcia~\cite{JPSm}.
\medskip

In this paper we continue the analysis for quadratic energies with matrix field with Sobolev regularity. In particular, we address the question of the global structure of the free boundary.
In details, we need to consider only the points with finite order of contact: to this aim we write
\[
\Gamma(u):=\Gamma^{\textup{finite}}\cup\Gamma^\infty,
\]
with $\Gamma^{\textup{finite}}$ and $\Gamma^\infty$, to be properly defined in the next sections, representing the points with finite and infinite order of contact.
For $\Gamma^\infty$ no structure at all is expected, in analogy with the case of non zero obstacles studied in \cite{FRRos21,FS22}, and the results on the lack of unique continuation by Pli\v s \cite{Plis}, Miller \cite{Miller}, Filonov \cite{Filonov}, and Mandache \cite{Mandache} for solutions to second order elliptic partial differential equations with H\"older coefficients.
We show that the set of free boundary points with finite order of contact is \textit{rectifiable}, i.e., can be stratified along submanifolds of dimension $n-1$ and class $C^{1}$.

\begin{theorem}\label{t:main}
Let $u\in \mathscr{A}$ be a solution to \eqref{e:minimization} under the hypotheses (H1) and (H2).
The subset of points of the free boundary with finite order of contact $\Gamma^{\textup{finite}}(u)$ is $(n-1)$-rectifiable, i.e., there exists a countable family of $C^1$-submanifolds $M_i\subset B_1'$ of dimension $n-1$ such that
\[
\mathcal{H}^{n-1}\left(\Gamma^{\textup{finite}}(u) \setminus \bigcup_{i\in \N} M_i\right) = 0.
\]
{Furthermore, there exists a set $\Sigma(u)\subset \Gamma^{\textup{finite}}(u)$
with Hausdorff dimension at most $n-2$ such that for every
$x\in\Gamma^{\textup{finite}}(u)\setminus\Sigma(u)$
\[
N_u(x,0^+)\in\{2m,2m-\sfrac12,2m+1\}_{m\in\mathbb N\setminus\{0\}}\,.
\]}
\end{theorem}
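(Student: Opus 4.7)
The plan is to follow a frequency-stratification strategy, adapting the constant coefficient scheme to the Sobolev setting via the quasi-monotonicity formula announced in the abstract. First I would use the quasi-monotonicity to guarantee that $N_u(x,0^+):=\lim_{r\downarrow 0}N_u(x,r)$ exists at every $x\in\Gamma^{\textup{finite}}(u)$, depends upper semicontinuously on the base point, and hence provides a well-defined stratification
\[
\Gamma^{\textup{finite}}(u)=\bigcup_\lambda \Gamma_\lambda(u),
\qquad \Gamma_\lambda(u):=\{x\in \Gamma^{\textup{finite}}(u):\,N_u(x,0^+)=\lambda\}.
\]
This reduces the rectifiability claim to the $(n-1)$-rectifiability of each $\Gamma_\lambda(u)$, to be summed over the countably many admissible frequencies $\lambda$.

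\medskip

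For each fixed $\lambda$ I would analyse blow-ups $u_{x,r}$ at points of $\Gamma_\lambda(u)$. The quasi-monotonicity, combined with the H\"older regularity of $\mA$ coming from (H1) and Morrey's embedding, should force every blow-up limit to be a $\lambda$-homogeneous solution of the thin obstacle problem with frozen matrix $\mA(x)$; after a linear change diagonalising $\mA(x)$ one is reduced to the flat case, where the admissible homogeneities form a discrete set. To promote the existence of well-behaved blow-ups to actual rectifiability I would invoke the Naber--Valtorta rectifiable Reifenberg theorem, whose three key inputs are: a cone-splitting lemma asserting that blow-ups at points of $\Gamma_\lambda(u)$ cannot be translation invariant along $n$ independent directions (so the maximal symmetry stratum has dimension at most $n-1$); an $L^2$ best-plane estimate controlling the mean square deviation of $\Gamma_\lambda(u)$ from its regression $(n-1)$-plane by the frequency drop $N_u(x,r)-N_u(x,0^+)$; and dyadic summability of this drop along scales, which is exactly the content of the quasi-monotonicity inequality. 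The same covering would moreover yield local finiteness of the $(n-1)$-Minkowski content and hence $\mathcal{H}^{n-1}$ bounds.

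\medskip

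For the second claim one would split the admissible frequencies $\{2m,2m-\sfrac12,2m+1\}_{m\geq 1}$ from the anomalous ones: at admissible frequencies the constant-coefficient classification does admit homogeneous blow-ups with $(n-1)$-dimensional translation invariance, consistent with the sharp dimension of the stratum, whereas at anomalous frequencies such $(n-1)$-symmetry is forbidden and a further application of cone-splitting drops the corresponding stratum by one full dimension, giving $\dim_{\mathcal H}\Sigma(u)\leq n-2$. The main technical obstacle throughout is the propagation of the error terms generated by the mere $W^{1,p}$ regularity of $\mA$ through every step of the cone-splitting/Reifenberg apparatus; in particular, matching the $\ell^1$ Dini condition demanded by Naber--Valtorta with the frequency drop controlled by the quasi-monotonicity is the delicate point, and this is where the hypothesis $p>n+1$ has to be exploited in full.
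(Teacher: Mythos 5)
Your overall blueprint (quasi-monotone intrinsic frequency, blow-up classification, cone-splitting, $L^2$ best-plane estimate, and Naber--Valtorta rectifiable Reifenberg) is essentially the strategy of the paper, which follows \cite[Section~6]{FS18-1} applied to the intrinsic frequency $N_u$. However, there is a genuine gap in the way you set up the stratification, and it is not cosmetic.

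You decompose $\Gamma^{\textup{finite}}(u)=\bigcup_\lambda\Gamma_\lambda(u)$ with $\Gamma_\lambda(u)=\{x:\,N_u(x,0^+)=\lambda\}$ and then propose ``to sum over the countably many admissible frequencies $\lambda$.'' This requires knowing in advance that the set of achievable values of $N_u(x,0^+)$ is countable, and nothing in the quasi-monotonicity or in the blow-up analysis gives this. You justify it by saying ``the admissible homogeneities form a discrete set'' for the constant-coefficient Signorini problem, but that classification is open: only $3/2$, $2m$, $2m+1$, and $2m-\sfrac12$ are known to occur, and the possibility of other homogeneities is precisely why the theorem you are proving reserves an exceptional set $\Sigma(u)$ of Hausdorff dimension $\leq n-2$. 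Your argument would be circular (it uses the conclusion of the second claim to set up the proof of the first) and even then would fail, because the second claim does not assert discreteness of the achievable frequencies, only that the non-standard ones live on a low-dimensional set. Moreover, for the Naber--Valtorta machinery one does not need a constant frequency on the stratum: what one needs is a uniform \emph{upper bound} on the frequency over all scales, so that the total dyadic frequency drop is summable, together with the lower bound $N_u\geq\sfrac32$.

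The correct stratification, and the one the paper uses, is by frequency bound rather than frequency value:
\begin{equation*}
\Gamma^{\textup{finite}}(u)\cap B_{\sfrac12}'=\bigcup_{\mathfrak m_0\in\N}\Gamma^{\mathfrak m_0}(u),
\qquad
\Gamma^{\mathfrak m_0}(u):=\big\{x\in\Gamma(u)\cap B_{\sfrac12}':\,\sup_{r\in(0,\sfrac12)}N_u(x,r)\leq\mathfrak m_0\big\}.
\end{equation*}
This is automatically a countable union of nested sets, and on each $\Gamma^{\mathfrak m_0}(u)$ the three inputs to Naber--Valtorta that you correctly identified (lower bound on the frequency, mean-flatness controlled by the frequency drop, cone-splitting/rigidity in the sense of Proposition~\ref{p:rigidity}) hold with constants depending only on $\mathfrak m_0$ and $[\mA]_{0,\alpha}$. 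You should replace $\Gamma_\lambda(u)$ by $\Gamma^{\mathfrak m_0}(u)$ throughout. Once this is fixed, your sketch of the second claim --- cone-splitting forcing the stratum of anomalous frequencies to drop to dimension $n-2$ --- is in line with the argument in \cite[Section~6]{FS18-1} and the paper, applied here to the intrinsic frequency via the change of variables $\PPhi(x)=x_0+\mA^{\sfrac12}(x_0)(x-x_0)$ which brings $\mA(x_0)$ to the identity (not just to diagonal form). One small side remark: Theorem~\ref{t:main} in the $W^{1,p}$ case does not assert finiteness of the Minkowski content of the whole free boundary (only of each $\Gamma^{\mathfrak m_0}(u)$ locally); that upgrade needs the global frequency bound available only for $p=\infty$ in Theorem~\ref{t:lip}.
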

{In the statement above, $N_u(x,0^+)$ represents the intrinsic frequency of $u$ at the free boundary point  $x$ (cp. Section~\ref{s.intrinsic frequency} for its definition).}

In addition, if $\mA$ is Lipschitz continuous, a more complete result holds for the whole free boundary $\Gamma(u)$, completely analogous to the case of the classical scalar Signorini problem for the Dirichlet energy as shown in \cite{FS18-1,FS18-1bis}.

\begin{theorem}\label{t:lip}
Let $u\in \mathscr{A}$ be a solution to \eqref{e:minimization} under the hypotheses (H1) with $p=\infty$ and (H2).
Then, the free boundary $\Gamma(u)$ is $(n-1)$-rectifiable with locally finite the Minkowski content: for every $K\subset\subset B_1'$, there exists a constant $C(K)>0$ such that
\[
\mathcal{L}^{n+1}\big(\mathcal{T}_r(\Gamma(u)\cap K)\big) \leq C(K) r^2, \qquad\forall\; r\in (0,1),
\]
where $\mathcal{T}_r(\Gamma(u)\cap K)\subset\R^{n+1}$ is the $r$-tubular neighbourhood of $\Gamma(u)\cap K$.

{Furthermore, there exists a set $\Sigma(u)\subset \Gamma^{\textup{finite}}(u)$
with Hausdorff dimension at most $n-2$ such that for every
$x\in\Gamma^{\textup{finite}}(u)\setminus\Sigma(u)$
\[
N_u(x,0^+)\in\{2m,2m-\sfrac12,2m+1\}_{m\in\mathbb N\setminus\{0\}}\,.
\]}
\end{theorem}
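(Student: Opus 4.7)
My plan is to reduce Theorem~\ref{t:lip} to Theorem~\ref{t:main} plus two extra ingredients available in the Lipschitz regime: (a) the absence of free boundary points with infinite order of contact, so that $\Gamma(u)=\Gamma^{\text{finite}}(u)$; and (b) a locally uniform upper bound on the intrinsic frequency $N_u(\cdot,0^+)$ on compact subsets of $B_1'$, which together with a Naber--Valtorta type quantitative stratification yields the Minkowski estimate. Granted (a) and (b), the rectifiability of $\Gamma(u)$ and the statement on $\Sigma(u)$ are immediate from Theorem~\ref{t:main}, and only the Minkowski content estimate requires new work.

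\textbf{Absence of infinite order points.} For $p=\infty$ the matrix field $\mA$ is Lipschitz, so the operator $\operatorname{div}(\mA\nabla \cdot)$ enjoys strong unique continuation in $B_1^+$. I would exploit this through the intrinsic frequency: if $x_0\in \Gamma^\infty(u)$, then the quasi-monotonicity of $N_u(x_0,\cdot)$ established in the previous sections, combined with the Lipschitz-coefficient monotonicity of Garofalo--Smit Vega Garcia~\cite{GS14}, would force every blow-up at $x_0$ to vanish identically, which contradicts the non-degeneracy of solutions at free boundary points. This gives $\Gamma^\infty(u)=\emptyset$.

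\textbf{Minkowski content via quantitative stratification.} Fix $K\subset\subset B_1'$. The first step is to prove a uniform upper bound $N_u(x,0^+)\leq N_K$ for all $x\in \Gamma(u)\cap K$; this follows from the quasi-monotonicity formula and standard a priori energy estimates on a slightly larger compact set. Given this cap, I would mimic the strategy of~\cite{FS18-1,FS18-1bis} for the classical Signorini problem: stratify $\Gamma(u)\cap K$ by the dimension of the spine of the blow-ups, and feed into the Naber--Valtorta rectifiable Reifenberg scheme the following three inputs: (i) the quasi-monotonicity of $N_u(x,\cdot)$ uniformly in $x$; (ii) upper semicontinuity and non-degeneracy of $x\mapsto N_u(x,0^+)$; (iii) a ``cone-splitting'' lemma turning frequency drops into effective symmetry of blow-ups. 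The output is the bound $\mathcal{L}^{n+1}\bigl(\mathcal{T}_r(\Gamma(u)\cap K)\bigr)\leq C(K)\,r^2$, together with a second proof of $(n-1)$-rectifiability.

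\textbf{Main obstacle.} The crux will be verifying that the Naber--Valtorta machinery, originally developed in the constant-coefficient setting, can be run with the \emph{intrinsic} frequency tied to $\mA$: concretely, one must show that the variable-coefficient errors in the quasi-monotonicity formula can be summed against a Dini-type scale so as to control the $L^2$-distance of the rescalings from their homogeneous symmetric limits by the total frequency drop. Lipschitz regularity of $\mA$ is exactly what makes this perturbative control fit inside the tolerances required by the quantitative Reifenberg theorem, and adapting the arguments of~\cite{FS18-1,FS18-1bis} to the intrinsic frequency is expected to be the technically demanding step.
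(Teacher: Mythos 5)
Your high-level decomposition (show $\Gamma^\infty(u)=\emptyset$, then get a locally uniform bound on $N_u$, then invoke Theorem~\ref{t:main}) matches the paper's structure, but the first two steps as you describe them hide a circularity. The quasi-monotonicity of $N_u(x_0,\cdot)$ established in Sections~\ref{s:frequency}--\ref{s.intrinsic frequency} is proved \emph{under hypothesis (H4)}, i.e.\ assuming $\sup_r N_u(x_0,r)<\infty$ to begin with; you cannot invoke it at a hypothetical point $x_0\in\Gamma^\infty(u)$ to derive a contradiction, nor use it to obtain the locally uniform cap $N_K$ on compacts, since both of those are precisely what you are trying to prove. What is needed — and what the paper supplies in Section~\ref{s:A lipschitz} — is a separate, \emph{unconditional} monotonicity formula valid for all free boundary points with Lipschitz $\mA$: the Almgren-original, $\mu$-weighted frequency $\Ico(r)=r\Eco(r)/\Hco(r)$ of \eqref{def EH}--\eqref{def I}, for which Propositions~\ref{E'}, \ref{H'}, \ref{I'} establish $e^{Cr}\Ico(r)$ nondecreasing on $(0,1]$ with no standing finiteness hypothesis. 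Proposition~\ref{I limitata} then compares the two frequencies directly via \eqref{H>Hco} and \eqref{D<Eco} to get $N_u(x_0,r)\le C\,\Ico(x_0,r)\le C\,\Ico(x_0,1)<\infty$; this at once shows $\Gamma(u)=\Gamma^{\textup{finite}}(u)$ and, by the continuity of $x\mapsto \Ico(x,\sfrac12)$ on $\overline B_{\sfrac12}'\cap\Gamma(u)$, gives the uniform cap $\Gamma(u)\cap B_{\sfrac12}'\subset\Gamma^{\mathfrak m_0}(u)$ for some $\mathfrak m_0$. Your reference to \cite{GS14} points at the right idea, but you never isolate this unconditional formula as the missing ingredient, and your blow-up/unique-continuation argument for $\Gamma^\infty(u)=\emptyset$ would require extracting a blow-up limit at a point where no compactness is available.

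Once the uniform frequency bound is in hand, re-running the full Naber--Valtorta scheme on $\Gamma(u)\cap K$ is unnecessary duplication: the paper simply observes (end of the proof of Theorem~\ref{t:main}) that the rectifiability argument there already delivers local finiteness of the Minkowski content of each $\Gamma^{\mathfrak m_0}(u)$, and then closes by covering and scaling. The statement on $\Sigma(u)$ comes along for free from Theorem~\ref{t:main}. So the structure of your argument is correct, the rectifiability and $\Sigma(u)$ claims genuinely reduce to Theorem~\ref{t:main} as you say, but the heart of the Lipschitz case is exactly the unconditional $\Ico$-monotonicity that your sketch elides.
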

Theorems~\ref{t:main} and \ref{t:lip} are a natural development of a common trend of recent results, which are directed to the understanding the robustness of the techniques conceived for the Laplacian (i.e., costant coefficients operators) in more general
cases (and {more realistic} from the point of view of applications).
In the setting of the thin obstacle problem, we recall the recent contributions on the structure of the regular set for Lipschitz coefficients \cite{GPS16}, for Sobolev coefficients \cite{KRS17} and for H\"older continuous coefficients \cite{JPSm}; for the structure of the singular set with Lipschitz coefficients \cite{GPS18}, and \cite{JPSm} for H\"older coefficients. The rectifiability of the whole free boundary {has been addressed in} \cite{FS20} for the nonlinear case of the area functional.

{One motivation for this study is related to the standard thin obstacle problem, provided the obstacle condition is assigned on a $C^{1,1}$ manifold rather than on a hyperplane. Indeed, a rectification of the manifold leads to a thin obstacle problem as the one stated in \eqref{e:minimization}.
A further motivation is contained in Section~\ref{ss:nonlinear}. Our results allow to
deduce the global structure of the free boundary for solutions to some nonlinear thin
obstacle problems, following the approach used for the area functional in \cite{FS20},
(cf. \cite{DFS22, AAS24} for preliminary results on the regularity of the solutions).}

\subsection{New insights and main difficulties}
The main ideas for this work stem from \cite{FS18-1}. Starting from the groundbreaking papers by Naber and Valtorta~\cite{NaVa1,NaVa2}, it is well-known that a monotone quantity of the type of the energy ratio for harmonic maps can be used to describe the structure of singularities: indeed, if the monotone quantity is able to detect homogeneous blowups at singular points and satisfies a suitable rigidigy property, then general covering and rectifiability arguments lead to the estimate of the measure (actually the Minkowski content) and the rectifiability of the singular set.

This principle has been exploited in \cite{FS18-1} in the case of thin and fractional obstacle problems with constant coefficients, using suitable variants of Almgren's frequency functions, which revelaed itself to be a key tools for this class of problems since \cite{ACS08}.

The main difficulties here are due to extension of such approach to the case of variable coefficients. Indeed, the monotonicity of the frequency is closely related to the linearity of the equations and is valid only for harmonic functions, while in the general case one should find a suitable linear approximation of the local geometry prescribed by the matrix $\mA$.
This is clearly an issue for general nonlinear problems and understanding this question for low regularity matrix fields $\mA$ is a first step towards such program.

We circumvent this difficulty by introducing an intrinsic frequency adapted to the  coefficients matrix $\mA$ ({as opposed to} the natural frequency for variable coefficients, see Section~\ref{s.intrinsic frequency} for more details).
Actually, we need to use three different {forms} of  the frequency, which although different can be suitably compared at the right scales.
In particular, we show a quasi-monotonicity formula for a Dirichlet type frequency for solutions to variable coefficient thin obstacle problems. This idea has been used for the analysis of the classical obstacle problem in \cite{FoGerSp18,AF20}. In the current setting, we couple it with the fundamental insight provided by Simon and Wickramasekera in the framework of $2$-valued minimal graphs, that quasi-monotonicity of the frequency is actually equivalent to a doubling condition on the relevant quantities provided Schauder estimates hold (cf. \cite[Lemma~6.1]{SW16}).

A comment deserves the restriction to Sobolev coefficients as opposite to the more general H\"older setting, for which the analysis of regular and singular points is contained in \cite{JP,JPSm}. In the derivation of the basic estimate on the oscillation of the frequency, as well as for the monotonicity, we differentiate the matrix of coefficients and the gradient of the solutions, and therefore we need enough regularity for $\mA$.

%
%

\section{Preliminaries on the thin obstacle problem}\label{s:preliminari}

Here we recall the hypotheses on the thin obstacle problem we address.
We consider quadratic energies of the form
\[
\int_{B_1^+} \langle\mA(x)\nabla v(x),\nabla v(x)\rangle\, dx,
\]
where the matrix field $\mA$ satisfies the hypotheses:
\begin{itemize}\label{e:A}
 \item[(H1)] $\mA\in W^{1,p}(B_1,\R^{(n+1)\times (n+1)})$, $p\in (n+1,\infty]$ ({in particular, by Morrey's embedding, we have $\mA\in C^{0,\alpha}(B_1,\R^{(n+1)\times (n+1)})$, 
$\alpha:=1-\frac{n+1}p$});
 \item[(H2)]  {$\mA(x)=(a_{ij}(x))_{i,j=1}^{n+1}$ is a symmetric, bounded and coercive matrix, 
i.e. for every $x\in B_1^+$, $i,j\in\{1,\ldots,n+1\}$, and $\xi\in\R^{n+1}$
$a_{i,j}(x)=a_{j,i}(x)$, and}
 \[
 \lambda|\xi|^2\leq  \langle\mA(x)\xi,\xi\rangle\leq\Lambda|\xi|^2\,.
 \]
for some $0<\lambda\leq\Lambda$.
\end{itemize}
Following \cite[Remark 1]{Ural86}, by a means of a change of variables, it is not restrictive to additionally assume that
\begin{itemize}
\item[(H3)] $a_{i,n+1}(x',0)=0$ for all $i=1, \ldots, n$ for every $x\in B_1'$.
\end{itemize}

Under these hypotheses, we can extend all the functions on $B_1$ by even symmetry: with a slight abuse of notation, set
\[
\mA(x',x_{n+1}) = \mA(x',-x_{n+1}), \quad
u(x',x_{n+1})=u(x',-x_{n+1}) \qquad\forall\; x\in B_1^+.
\]
In this way it is equivalent to formulate the problem on $B_1$ with the symmetry condition:
\begin{equation}\label{e:prob}
\min_{v\in \mathcal{A}} \int_{B_1} \langle\mA(x)\nabla v(x),\nabla v(x)\rangle\, dx,
\end{equation}
in the class of functions
\[
\mathcal{A} := \left\{ v\in g+H^1_0(B_1) : v(x',x_{n+1})=v(x',-x_{n+1}),\quad v(x',0)\geq 0\right\},
\]
with $g\in H^{1}(B_1)$ even symmetric and such that  $g\geq 0$ on $B_1'$.

\medskip

By a result due to Ural'tseva \cite[Theorem~3.1]{Ural87} the solution of the thin obstacle problem under assumptions (H1), (H2) and (H3) have H\"older continuous first derivative up to $B_1'$ (for the optimal H\"older exponent see \cite{KRS16,KRS17}).

\begin{theorem}[\cite{Ural87}]\label{t:reg}
For every $g\in H^{1}(B_1)$, even symmetric with $g\geq 0$ on $B_1'$, there exists a unique
solution $u\in \mathcal{A}$ to the thin obstacle problem \eqref{e:prob}.
Moreover, $u \in H_{\textup{loc}}^{2}\cap C_{\textup{loc}}^{1,\beta}(B_1^+\cup B_1')$ for some $\beta\in(0,1)$,
and
\begin{equation}\label{e:C1_alpha reg}
\|u\|_{H^2(B_{\sfrac{1}{2}}^+\cup B_{\sfrac{1}{2}}')} +
\|u\|_{C^{1,\beta}(B_{\sfrac{1}{2}}^+\cup B_{\sfrac{1}{2}}')}
\leq C\, \|u\|_{L^2(B_1)}.
\end{equation}
where $C=C(p,n,\beta,\|\mathbb{A}\|_{W^{1,p}})>0$.
\end{theorem}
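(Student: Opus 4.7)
The plan splits into three stages, of which the third is the main difficulty.

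\textbf{Existence and uniqueness.} The functional $F(v) := \int_{B_1}\langle \mA\nabla v,\nabla v\rangle\,dx$ is, by (H2), equivalent to the squared $H^1$-seminorm, hence continuous, coercive, and strictly convex on the affine space $g+H^1_0(B_1)$. The class $\mathcal{A}$ is convex, non-empty (since $g\in\mathcal{A}$), and closed under weak $H^1$-convergence (the traces on $B_1'$ converge in $L^2$ up to subsequences, so $v\geq 0$ on $B_1'$ passes to weak limits). The direct method of the calculus of variations then yields a unique minimizer $u\in \mathcal{A}$.

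\textbf{$H^2$ regularity up to $B_1'$.} I would use tangential difference quotients. Testing the variational inequality $\int_{B_1}\langle\mA\nabla u,\nabla(\varphi-u)\rangle\,dx\geq 0$ with $\varphi=u\pm\eta^2 D_{-h}^\tau D_h^\tau u$ for a tangential direction $\tau\in\{e_1,\ldots,e_n\}$ (which preserves membership in $\mathcal{A}$, since the obstacle constraint is invariant under translations along $B_1'$), standard computations yield $\partial_\tau \nabla u\in L^2_{\loc}$. The remaining entry $\partial^2_{n+1,n+1}u$ is then recovered from $\dv(\mA\nabla u)=0$, which, thanks to (H3), holds distributionally on all of $B_1$ after even extension (without singular terms concentrated on $\{x_{n+1}=0\}$); since $a_{n+1,n+1}$ is bounded below, one can solve algebraically for $\partial^2_{n+1,n+1}u$. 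A covering argument produces the quantitative estimate.

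\textbf{$C^{1,\beta}$ regularity.} Away from $B_1'$ and at non-coincidence points on $B_1'$ this is classical Schauder, noting that (H1) implies $\mA\in C^{0,\alpha}$ by Morrey. The hard step is regularity up to arbitrary points of $B_1'$ in the presence of the Signorini complementary condition. I would follow Ural'tseva's scheme by penalization: minimize $F(v)+\eps^{-1}\int_{B_1'}(v^-)^2\,d\mathcal{H}^n$ without sign constraint on $B_1'$, derive $\eps$-uniform $C^{1,\beta}$ bounds, and pass to the limit $\eps\to 0^+$. The uniform bound at a free boundary point $x_0$ would come from a Campanato-type iteration: at each scale $\rho$, compare $u_\eps$ in $B_\rho(x_0)$ with the minimizer $w_\rho$ of the constant coefficient thin obstacle problem with frozen matrix $\mA(x_0)$, for which the $C^{1,\sfrac12}$ theory of Athanasopoulos--Caffarelli applies. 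The comparison error is controlled by $[\mA]_{C^{0,\alpha}}\rho^\alpha$, and choosing $\beta<\alpha$ makes the iteration contractive on the decay of a $\rho^{-n-1}\|\nabla u_\eps -(\nabla u_\eps)_{x_0,\rho}\|_{L^2}^2$-type excess. The main technical obstacle, and the reason hypothesis (H3) is imposed, is precisely to make the even extension compatible with the thin obstacle structure at every free boundary point, so that the frozen-coefficient comparison problem retains the Signorini form and the reflection arguments go through cleanly.
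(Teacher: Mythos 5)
The paper does not give its own proof of this theorem: it is stated as a citation of Ural'tseva's work \cite{Ural87} (building on \cite{Ural85,Ural86}), so there is no in-paper argument for your proposal to be compared against. That said, a few remarks on your blind proof.

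Your step on existence and uniqueness is standard and correct; the only point worth making explicit is that the trace operator $H^1(B_1)\to L^2(B_1')$ is compact, which is what lets the sign constraint and the even symmetry pass to weak limits, as you indicate.

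In the $H^2$ step there is a factual error in the way you justify the recovery of $\partial^2_{n+1,n+1}u$. It is not true that, after even reflection and using (H3), $\div(\mA\nabla u)=0$ holds distributionally on all of $B_1$ without singular parts: by \eqref{e:ob-pb local} one only has $\div(\mA\nabla u)=0$ away from the contact set $\Lambda(u)=\{(x',0):u(x',0)=0\}$, and in general $\div(\mA\nabla u)$ is a nontrivial nonpositive measure concentrated on $\Lambda(u)$, with density proportional to $a_{n+1,n+1}\,\partial_{n+1}u$. The role of (H3) is only to reduce the conormal derivative on $B_1'$ to $a_{n+1,n+1}\partial_{n+1}u$, not to remove that measure. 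The statement $u\in H^2_{\loc}(B_1^+\cup B_1')$ is one-sided: you must solve for $\partial^2_{n+1,n+1}u$ from the equation in the open half-ball $B_1^+$ (where it holds classically), obtaining the estimate up to $B_1'$ from the $+$ side; you cannot claim $H^2$ regularity of the even reflection across $B_1'$ through the contact set. It would also be worth checking admissibility of the test function $\varphi=u\pm\eta^2 D^\tau_{-h}D^\tau_h u$ in $\mathcal{A}$ (the constraint $\varphi\geq 0$ on $B_1'$ is not automatic); the cleaner device is to test against the tangential translate $u(\cdot+h\tau)$, which is manifestly admissible by invariance of the zero obstacle under tangential shifts. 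Finally, the estimate \eqref{e:C1_alpha reg} controls the $H^2$ norm by $\|u\|_{L^2(B_1)}$ rather than by $\|u\|_{H^1}$; closing this requires a Caccioppoli inequality in addition to the covering argument you invoke.

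For $C^{1,\beta}$ your penalization-plus-Campanato strategy (compare at each scale with the frozen-coefficient Signorini problem, control the error by $[\mA]_{0,\alpha}\rho^\alpha$, iterate with $\beta<\alpha$) is a plausible route in the style of the more recent variable-coefficient literature (e.g.\ \cite{GS14,RS17,JPSm}), but it is not the route Ural'tseva takes in \cite{Ural87}. Her argument is based on direct De Giorgi/DiBenedetto-type gradient estimates obtained from the variational inequality, showing H\"older continuity of the tangential derivatives $\partial_\tau u$ and then of $\partial_{n+1}u$, rather than on a penalization followed by an excess-decay iteration with comparison functions. Both approaches are viable; yours is closer in spirit to the proofs that give the optimal exponent $\beta=\sfrac12$, while Ural'tseva's is more elementary and yields some unspecified $\beta\in(0,1)$, which is all that the cited Theorem claims.
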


\medskip

The Euler--Lagrange equation satisfied by the solution $u$ to the thin obstacle problem are then the following:
\begin{equation}\label{e:ob-pb local}
\begin{cases}
\div\big(\mA(x)\nabla u(x)\big) = 0  & \text{for }\;
x \in B_1 \setminus \big\{(x',0)\,:\, u(x',0) = 0 \big\},\\
\div\big(\mA(x)\nabla u(x)\big) \leq 0 & \text{in the sense of distribution in }\,B_1.
\end{cases}
\end{equation}
Moreover, in view of the assumptions (H2) and (H3), the Signorini complementary condition
in \eqref{e:ob-pb local intro} then reads as
\[
u\, \de_{n+1} u= 0 \qquad \textup{on }\; B_1' .                                                                                                                                                          \]
In the sequel $u$ will always denote a solution to the thin obstacle problem \eqref{e:prob}, unless otherwise stated.

\section{The frequency function}\label{s:frequency}

In this section we introduce a suitable version of Almgren's frequency function.
Let $\phi:[0,\infty) \to [0,\infty)$ be a decreasing $C^{1,1}$ function such that $\phi'(t)<0$ if $\frac{1}{2} < t <1$ and
\[
\phi(t) = 
\begin{cases}
1 & \text{for } \; 0\leq t \leq \frac{1}{2},\\
>0 & \text{for } \; \frac{1}{2} < t <1,\\
0 & \text{for } \; 1 \leq t .\\
\end{cases}
\]
For the sake of simplicity we assume that
\begin{equation}\label{e:struttura phi}
\phi(t) =2(1-t) \qquad \forall\;t\in \left[\frac58,\frac 78\right].
\end{equation}
We define the frequency function of a function $u$ at a point $x_0 \in B_1'$ by
\[
I_u(x_0,r) := \frac{r\, D_u(x_0,r)}{H_u(x_0,r)}
\quad  \forall\; r<1-|x_0|,
\]
where the Dirichlet energy is given by
\[
D_u(x_0,r) := \int \phi\Big(\textstyle{\frac{|x-x_0|}{r}}\Big)\,|\nabla u(x)|^2\d x,
\]
and the ``boundary'' $L^2$ norm of $u$ is given by
\[
H_u(x_0,r) := - \int \phi'\Big(\textstyle{\frac{|x-x_0|}{r}}\Big)
\,\frac{u^2(x)}{|x-x_0|}\,\d x.
\]
Note that the frequency function is well-defined as long as $H_u(x_0,r)>0$.
In what follows, when $u$ is a solution to \eqref{e:prob}, we shall tacitly assume that the latter condition is satisfied when writing $I_u(x_0,r)$.
Indeed, $H_u(x_0,r)>0$ if $x_0\in\Gamma(u)$ by minimality of $u$ and the uniqueness of minimizers.
Otherwise $u\equiv0$ on $B_r(x_0)\setminus B_{\sfrac r2}(x_0)$, in turn implying
$u\equiv0$ on $B_r(x_0)$. This contradicts the fact that $x_0$
is a free boundary point. Analogously, if $x_0\in\Gamma(u)$ then 
$D_u(x_0,r) > 0$. 

Additionally, for later convenience we introduce the following quantities
\[
 G_u(x_0,r):=-r^{-1} \int\phi'\Big({\textstyle{\frac{|x-x_0|}r}\Big)u(x)\nabla u(x)\cdot \frac{x-x_0}{|x-x_0|}}\d x\,,
\]
and
\begin{equation}\label{e:def E}
E_u(x_0,r):= -\int \phi'\Big(\textstyle{\frac{|x-x_0|}{r}}\Big)
\frac{|x-x_0|}{r^2}\Big(\nabla u(x) \cdot\frac{x-x_0}{|x-x_0|}\Big)^2\, \d x.
\end{equation}
In particular, note that $E_u(x_0,r)H_u(x_0,r)-G_u^2(x_0,r)\geq 0$ by Cauchy-Schwartz inequality.

Finally, for every $x_0 \in \Gamma(u)$ and $r>0$, the rescalings
of a solution $u$ are given by
\begin{gather}\label{e:rescaling-1}
u_{x_0,r}(y) :=
\frac{r^{\sfrac n2}}{H_u^{\sfrac12}(x_0,r)}u(x_0+r\,y)
\quad\quad \forall\; y\in 
B_{\frac{1-|x_0|}{r}},
\end{gather}
so that
\[
H_{u_{x_0,r}}(\underline{0},1)=1.
\]
We shall always omit to write the base point 
$x_0$ in the notation of $I_u$, $D_u$, $H_u$, $E_u$, $G_u$ when $x_0 = \underline{0}$.
\medskip

By a simple corollary of Theorem \ref{t:reg} we have the following compactness: if $(u_j)_{j\in \N}$ are such that
\[
\sup_{j\in \N} \big(D_{u_j}(1) + H_{u_j}(1)\big) < + \infty,
\]
then $u_j$ is uniformly bounded in $H^{1}(B_s)$ for every $s<1$.
Therefore, if $u_j$ are solutions to the thin obstacle problem \eqref{e:prob} satisfying the hypotheses (H1)-(H3) ({holding uniformly in $j$ for varying matrix fields $\mA_j$, } with the same constants $p, \lambda, \Lambda$), then by Theorem \ref{t:reg} there exists a function $u\in C^{1,\beta}(B_1^+)$ and a subsequence $(u_{j'})\subset (u_j)$ such that
\[
u_{j'} \to u\qquad \textup{in}\quad C^{1,\gamma}_{\loc}(B_1^+\cup B_1')\quad\forall\;\gamma\in(0,\beta),
\]
where $\beta$ is the constant in Theorem \ref{t:reg}.

In particular, the following compactness result holds for the rescaling of solutions.
\begin{proposition}\label{c:compactness}
Let $(u_j)_{j\in\N}$ be a sequence of solutions to the thin obstacle problem \eqref{e:prob}
{for varying matrix fields $\mA_j$ under assumptions (H1)-(H3) holding uniformly in $j$}, with $x_j\in \Gamma(u_j)\cap B_{\sfrac12}'$ for every $j \in \N$.
Assume that
$$\sup_{j}I_{u_j}(x_j,\varrho_j)<\infty \qquad \textup{ for some $\varrho_j\downarrow 0$.}$$ Then, there exists a subsequence
$(j')$ such that $x_{j'}\to x_\infty\in\bar{B}_{\sfrac12}'$ and
a function $v_\infty$ such that 
$v_{j'}:=(u_{j'})_{x_{j'},\varrho_{j'}}$ satisfy as $j'\to \infty$
\begin{gather}
v_{j'}\to v_\infty \quad
\text{in}\quad C^{1,\gamma}_{\loc}(B_1^+\cup B_1') \quad\forall\;\gamma\in(0,\beta),
\label{e:cpt2}\\
\underline{0}\in \{x\in\bar{B}_{\sfrac12}:\,v_\infty(x)=|\nabla v_\infty(x)|=0\}\,.\label{e:cpt3}
\end{gather}
Moreover, $v_\infty$ is a solution the thin obstacle problem for the constant coefficients quadratic energy having density $\xi\mapsto \langle\mA(x_\infty)\xi,\xi\rangle$.
\end{proposition}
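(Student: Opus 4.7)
The plan is to first extract convergent subsequences from the data (base points and matrix fields), then derive uniform $C^{1,\beta}$ estimates for the rescalings $v_j := (u_j)_{x_j,\varrho_j}$, and finally to pass to the limit in the Euler--Lagrange system.

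\emph{Compactness of the data.} Since $\{x_j\} \subset \bar B_{\sfrac12}'$ is compact, extract $x_{j'} \to x_\infty \in \bar B_{\sfrac12}'$. The uniform $W^{1,p}$-bound on $\mA_j$ together with the compact embedding of $W^{1,p}(B_1)$ into $C^{0,\alpha'}(\bar B_1)$ for any $\alpha' < \alpha = 1 - (n+1)/p$ (via Morrey's inequality and Arzelà--Ascoli) yield a further subsequence with $\mA_{j'} \to \mA_\infty$ uniformly. The rescaled matrix $\tilde\mA_j(y) := \mA_j(x_j + \varrho_j y)$ then converges uniformly on compact subsets of $\R^{n+1}$ to the constant matrix $\mA_\infty(x_\infty)$, because $|\tilde\mA_j(y) - \mA_\infty(x_\infty)| \leq \|\mA_j - \mA_\infty\|_{C^0} + C|x_j+\varrho_j y - x_\infty|^{\alpha'} \to 0$. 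Moreover, $\tilde\mA_j$ inherits (H1)--(H3) with uniform constants, (H3) being preserved since $(x_j + \varrho_j y)_{n+1}=0$ whenever $y_{n+1}=0$.

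\emph{Uniform regularity of $v_j$.} The rescaling \eqref{e:rescaling-1} is tailored so that $v_j$ solves the thin obstacle problem on $B_{R_j}$, with $R_j := (1-|x_j|)/\varrho_j \to \infty$, for the coefficient field $\tilde\mA_j$, and so that $H_{v_j}(\underline 0,1)=1$ while $D_{v_j}(\underline 0,1) = I_{u_j}(x_j,\varrho_j) \leq C$. The corollary of Theorem~\ref{t:reg} recorded just before the proposition then provides a uniform $H^1(B_s)$ bound for every $s \in (0,1)$. Combining interior elliptic regularity on compacts of $B_1^+$ with Theorem~\ref{t:reg} rescaled around each boundary point $x_0 \in \bar B_\rho'$, $\rho<1$ (at scale $r < (1-\rho)/2$, the translated-rescaled function still solves a thin obstacle problem on $B_1$ with coefficients enjoying (H1)--(H3) with uniform constants), we obtain uniform $C^{1,\beta}$ bounds on every compact $K \Subset B_1^+ \cup B_1'$. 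This upgrade from $H^1$ control to boundary $C^{1,\beta}$ control through rescalings of Theorem~\ref{t:reg} is the main technical step.

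\emph{Passage to the limit.} By Arzelà--Ascoli and a diagonal extraction, $v_{j'} \to v_\infty$ in $C^{1,\gamma}_{\loc}(B_1^+ \cup B_1')$ for every $\gamma \in (0,\beta)$, which proves \eqref{e:cpt2}. Since $x_j \in \Gamma(u_j)$ implies $u_j(x_j)=0$ by continuity of $u_j$ and closedness of the coincidence set, we get $v_j(\underline 0)=0$; moreover, $u_j$ attains its minimum on $B_1'$ at $x_j$, forcing $\nabla' u_j(x_j)=0$, while the even symmetry in $x_{n+1}$ combined with $C^1$-regularity gives $\partial_{n+1} u_j(x_j)=0$, hence $\nabla v_j(\underline 0)=0$. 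The $C^1$-convergence transfers these properties to $v_\infty$, giving \eqref{e:cpt3}. Finally, testing the distributional Euler--Lagrange conditions \eqref{e:ob-pb local} for $v_{j'}$ against admissible test functions and passing to the limit using the uniform convergence $\tilde\mA_{j'}\to \mA_\infty(x_\infty)$ together with the strong $C^1_{\loc}$ convergence of $v_{j'}$ shows that $v_\infty$ solves the constant-coefficient thin obstacle problem for the energy density $\xi \mapsto \langle\mA_\infty(x_\infty)\xi,\xi\rangle$.
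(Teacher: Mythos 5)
Your proof is correct and follows essentially the same approach as the paper: the normalizations $H_{v_j}(\underline{0},1)=1$ and $D_{v_j}(\underline{0},1)=I_{u_j}(x_j,\varrho_j)$, the uniform $C^{1,\beta}$ bounds obtained via Theorem~\ref{t:reg} and covering (this is precisely the ``compactness'' recorded just before the statement), Arzel\`a--Ascoli, and passage to the limit in the Euler--Lagrange formulation (the paper also mentions $\Gamma$-convergence as an alternative). One minor imprecision concerns the vanishing of the normal derivative. The even extension of $u_j$ is \emph{not} in general $C^1$ across $B_1'$: on the interior of the contact set, $\partial_{n+1}u_j(\cdot,0^+)$ may be strictly negative, so the normal derivative jumps across $B_1'$, and hence ``even symmetry combined with $C^1$-regularity'' does not by itself yield $\partial_{n+1}u_j(x_j)=0$. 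The correct argument uses the Signorini complementary condition together with (H2)--(H3): since $a_{n+1,n+1}\partial_{n+1}u_j\equiv 0$ on $\{u_j>0\}\cap B_1'$ and $a_{n+1,n+1}\geq\lambda>0$, the one-sided normal derivative vanishes on $\{u_j>0\}\cap B_1'$; a free boundary point $x_j$ lies in the closure of this set, and $\partial_{n+1}u_j$ is continuous on $B_1^+\cup B_1'$ by Theorem~\ref{t:reg}, hence $\partial_{n+1}u_j(x_j)=0$.
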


\begin{proof}
From the assumption on the frequency, clearly the functions $v_{j}:=(u_{j})_{x_{j},\varrho_{j}}$ satisfy
\[
\sup_{j\in \N} \big(D_{v_j}(1) + H_{v_j}(1)\big) < + \infty.
\]
Therefore, by compactness there exists a subsequence $(j')$ such that the points $x_{j'}$ converge to some $x_\infty\in \bar B_{\sfrac12}'$, and $v_{j'}$ converge to a limiting function $v_\infty$ in the sense of \eqref{e:cpt2}.
Since $v_{j'}(\underline{0})=|\nabla v_{j'}(\underline{0})|=0$ (because $x_{j'}\in \Gamma(u_{j'})$),
by the convergence \eqref{e:cpt2} we infer also \eqref{e:cpt3}.

Finally, the fact that $v_\infty$ is a solution to a thin obstacle problem with fixed coefficients $\mA(x_\infty)$ follows either by a $\Gamma$-convergence result or by passing into the limit in the weak formulation of the Euler--Lagrange equations \eqref{e:ob-pb local} characterizing the solutions, thanks to the convergence \eqref{e:cpt2} and the continuity of the matrix field $\mA$.
\end{proof}

\subsection{Doubling estimates}

To establish quasi-monotonicity of the frequency at distinguished points of the free boundary, 
we follow a perturbative approach developed for the classical obstacle in \cite{FoGerSp18}.
This approach is coupled with a fundamental insight present in the work of Simon and Wickramasekera~\cite{SW16} on minimal immersion, who highlighted that the doubling condition is equivalent to the quasi-monotonicity of the frequency.

More in details, we shall consider $x_0\in \Gamma(u)$ satisfying the following hypothesis:
\begin{itemize}
\item[(H4)] $x_0\in\Gamma(u)\cap B_{\sfrac12}'$ such that
\begin{equation}\label{e:limsup frequency}
\mathfrak{m}(x_0):=\sup_{r\in(0,\sfrac12)}I_u(x_0,r)<\infty
\end{equation}
\end{itemize}
As shown in Appendix~\ref{a:order of contact} condition \eqref{e:limsup frequency} 
is equivalent to assume that $u$ has a finite order of contact with the null obstacle at 
$x_0\in\Gamma(u)$ {(cf. the Introduction).}

A first step towards the monotonicity of the frequency is to establish a lower bound.
A more refined version of the ensuing result will follow after showing 
the quasi-monotonicity of the frequency (cf. \eqref{e:freq lb bis}).
All the constants that will appear in the results below can depend on the parameters $p, \lambda, \Lambda$ of (H1)-(H3), even though it will never be explicitly highlighted.

\begin{lemma}\label{l:freq lb}
For every $\mathfrak{m}_0>0$ there exist constants $\varrho, C>0$ depending on
$\mathfrak{m}_0$ with this property. If $u$ is a solution to \eqref{e:prob} under assumptions (H1)-(H3), for every $x_0 \in \Gamma(u)\cap B_{\sfrac12}'$ satisfying (H4) with $\mathfrak{m}(x_0)\leq \mathfrak{m}_0$, then
\begin{equation}\label{e:freq lb}
I_u(x_0,r)\geq C\qquad \forall\; r\in (0,\varrho]. 
\end{equation}
\end{lemma}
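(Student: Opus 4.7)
The plan is a contradiction argument relying on Proposition~\ref{c:compactness}. Suppose the conclusion fails: there exist solutions $(u_j)_j$ to \eqref{e:prob} under (H1)-(H3) with uniform constants $p,\lambda,\Lambda$, free boundary points $x_j\in\Gamma(u_j)\cap B'_{\sfrac12}$ satisfying (H4) with $\mathfrak{m}(x_j)\le\mathfrak{m}_0$, and radii $r_j\downarrow 0$ such that $I_{u_j}(x_j,r_j)\to 0$. Setting $v_j:=(u_j)_{x_j,r_j}$ as in \eqref{e:rescaling-1}, the scaling properties of $H$, $D$ and $I$ yield
\[
H_{v_j}(\underline{0},1)=1,\qquad D_{v_j}(\underline{0},1)=I_{u_j}(x_j,r_j)\longrightarrow 0,
\]
while the upper bound in (H4) translates into $I_{v_j}(\underline{0},s)\le\mathfrak{m}_0$ for every $s\in(0,\frac{1}{2r_j})$.

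Up to a subsequence, Proposition~\ref{c:compactness} yields $v_j\to v_\infty$ in $C^{1,\gamma}_{\loc}(B_1^+\cup B_1')$, with $v_\infty$ solving a constant-coefficient thin obstacle problem with density $\xi\mapsto\langle\mA(x_\infty)\xi,\xi\rangle$ and $v_\infty(\underline{0})=|\nabla v_\infty(\underline{0})|=0$. To pass to the limit in the integrals defining $H_{v_j}(\underline{0},1)$ and $D_{v_j}(\underline{0},1)$, this convergence must be upgraded to $C^{1,\gamma}(\bar B_1)$: I would first derive from the upper bound $I_{v_j}(\underline{0},\cdot)\le\mathfrak{m}_0$ a doubling estimate $H_{v_j}(\underline{0},R)\le C(\mathfrak{m}_0)$ for $R\in[1,2]$, via integration of the differential identity for $H'$, with an error term controlled through the $W^{1,p}$ assumption on $\mA$. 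The resulting $L^2(B_2)$ bound on $v_j$ would then allow Theorem~\ref{t:reg}, applied at scale $B_2\subset B_{1/(2r_j)}$, to provide a uniform $C^{1,\beta}(\bar B_1)$ bound on $v_j$; since the weights $\phi(|y|)$ and $-\phi'(|y|)/|y|$ defining $D$ and $H$ are continuous and compactly supported in $\bar B_1$, uniform convergence on $\bar B_1$ yields $H_{v_\infty}(\underline{0},1)=1$ and $D_{v_\infty}(\underline{0},1)=0$.

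The second identity reads $\int\phi(|y|)|\nabla v_\infty(y)|^2\,\d y=0$, and since $\phi(|y|)>0$ on the open connected ball $B_1$, it forces $\nabla v_\infty\equiv 0$ on $B_1$; combined with $v_\infty(\underline{0})=0$, this yields $v_\infty\equiv 0$ on $B_1$, contradicting $H_{v_\infty}(\underline{0},1)=1$. The main technical obstacle is the convergence upgrade described in the previous paragraph: since the weights of $H$ and $D$ reach $\partial B_1$, the $C^{1,\gamma}_{\loc}(B_1^+\cup B_1')$ convergence of Proposition~\ref{c:compactness} is insufficient, and the doubling estimate for $H$ must be produced in the variable-coefficient setting by exploiting the $W^{1,p}$ regularity of $\mA$ to control the correction terms appearing in the $H'$-identity.
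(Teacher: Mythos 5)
Your strategy — compactness plus a contradiction argument via Proposition~\ref{c:compactness} — is exactly the paper's, but your implementation diverges in the choice of rescaling and in how you address the resulting boundary issue, and the latter is where the plan does not close.

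You rescale at $r_j$, so the vanishing frequency is read off as $D_{v_j}(\underline 0,1)\to 0$ while $H_{v_j}(\underline 0,1)=1$; since the weights $\phi(|y|)$ and $-\phi'(|y|)/|y|$ reach $\partial B_1$, you then correctly observe that $C^{1,\gamma}_{\loc}(B_1^+\cup B_1')$ convergence does not directly pass these integrals to the limit. The paper instead sets $v_j:=(u_j)_{x_j,2r_j}$: then $I_{v_j}(\underline 0,\sfrac12)=I_{u_j}(x_j,r_j)\to 0$, and both $D_{v_\infty}(\sfrac12)$ and $H_{v_\infty}(\sfrac12)$ involve weights supported in $\overline{B_{\sfrac12}}\subset\subset B_1$, so the locally uniform convergence of Proposition~\ref{c:compactness} is enough to conclude $D_{v_\infty}(\sfrac12)=0$; the contradiction then comes from $H_{v_\infty}(1)=1$ and analyticity. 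This ``observe the frequency one scale inside'' device is the intended lightweight fix, and your proof should adopt it.

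The alternate fix you propose — deriving $H_{v_j}(\underline 0,R)\le C(\mathfrak m_0)$ for $R\in[1,2]$ from the $H'$--identity with errors controlled by the $W^{1,p}$ bound on $\mA$ — is circular in the context of this paper. The $H'$--identity is $H_u'(r)=\tfrac{n}{r}H_u(r)+2G_u(r)$, and passing from $G_u$ to $D_u$ introduces the error $\epsilon_D=G_u-D_u$. The only estimate available on $\epsilon_D$ is \eqref{e:variationD errore} of Lemma~\ref{l:stima J1}, whose proof already contains the intermediate term $D_u^{\sfrac12}(x,2t)$ (see \eqref{e:epsilon D}) and replaces it with $D_u^{\sfrac12}(x,t)$ by invoking the doubling estimates of Corollary~\ref{c:freq ben def}. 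Those in turn rest on Proposition~\ref{p:doubling} and on the present Lemma. So your proposed route uses, in effect, the very doubling property you are trying to establish; without it, the error $\epsilon_D(s)/H(s)$ contains the uncontrolled ratio $H(2s)/H(s)$, and the Gronwall argument does not close. The correct order is: compactness $\Rightarrow$ Lemma~\ref{l:freq lb} and Proposition~\ref{p:doubling} (both proved by soft compactness/blow-up, not by integrating the $H'$--identity) $\Rightarrow$ Lemma~\ref{l:lim uniforme} and Corollary~\ref{c:freq ben def} $\Rightarrow$ the quantitative error estimates of Lemma~\ref{l:stima J1}. Replace your second paragraph with the $2r_j$ rescaling and the argument goes through essentially as in the paper.
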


\begin{proof}
Assume by contradiction that there exist solutions $u_j$ and points $x_j\in \Gamma(u_j)\cap B'_{\sfrac12}$ with $\mathfrak{m}(x_j)\leq \mathfrak{m}_0$ for some $\mathfrak{m}_0<\infty$
as in the statement, such that for a suitable choice of radii $r_j\downarrow 0$ we have that
\[
I_{u_j}(x_j,r_j)\leq \sfrac1j.
\]
For $j$ sufficiently large $I_{u_j}(x_j,2r_j)\leq \mathfrak{m}_0$, and therefore by
Corollary~\ref{c:compactness}, up to a subsequence not relabeled and keeping
the notation introduced there, we conclude that the functions $v_j:=(u_j)_{x_j,2r_j}$
converges in $C^1(B_1)$ to some function $v_\infty$ that minimizes
\[
v\mapsto \int_{B_1}\langle \mA(x_\infty)\nabla v,\nabla v\rangle dx,
\]
among all functions $v\in v_\infty+H^{1}_0(B_1)$ with $v(x',0)\geq 0$ on $B_1'$.
Moreover, by strong convergence, we infer that $H_{v_\infty}(1)=H_{(u_j)_{x_j,2r_j}}(1)=1$
and
\[
I_{v_\infty}(\sfrac12)=\lim_jI_{(u_j)_{x_j,2r_j}}(\sfrac12)=\lim_jI_{u_j}(x_j,r_j)=0.
\]
Therefore, $D_{v_\infty}(\sfrac12)=0$ and thus $v_\infty\equiv 0$ on $B_{\sfrac12}$, 
a contradiction being $v_\infty$ analytic on $B_1\setminus B_1'$.
\end{proof}

Next we prove the above mentioned doubling estimates.

\begin{proposition}\label{p:doubling}
{For every $\mathfrak{m}_0>0$ there exist constants $\varrho, C>0$ depending on
$\mathfrak{m}_0$ with this property. If $u$ is a solution to \eqref{e:prob} under assumptions
(H1)-(H3), for every $x_0 \in \Gamma(u)\cap B_{\sfrac12}'$ satisfying (H4) with $\mathfrak{m}(x_0)\leq \mathfrak{m}_0$, then}
\begin{equation}\label{e:H doubling}
C^{-1}\leq\frac{H_u(x_0,2r)}{H_u(x_0,r)}\leq C,  
\qquad 1\leq\frac{D_u(x_0,2r)}{D_u(x_0,r)}\leq C \qquad \forall\;r\in (0,\varrho].
\end{equation}
\end{proposition}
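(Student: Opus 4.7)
The plan is to control the logarithmic derivative of $H_u(x_0,\cdot)$, obtain a two-sided doubling from it, and then derive the doubling of $D_u$ almost for free.

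First, by the change of variables $y = (x-x_0)/r$, direct differentiation yields the elementary identity
\[
\partial_r H_u(x_0, r) = \frac{n}{r} H_u(x_0, r) + 2\, G_u(x_0, r),
\]
so that $\partial_r \log H_u(x_0,r) = n/r + 2 G_u(x_0,r)/H_u(x_0,r)$. The task reduces to identifying $G_u$ with $D_u$ up to lower-order errors. To this end, I would use $u(x)\,\phi(|x-x_0|/r)$ as an admissible test function in the Euler--Lagrange equation \eqref{e:ob-pb local}, discarding the boundary integrand on $B_1'$ thanks to the Signorini complementary condition $u\,\mathbb{A}\nabla u\cdot e_{n+1}=0$. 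This gives the Rellich-type identity
\[
\int \phi(|x-x_0|/r)\, \langle \mathbb{A}\nabla u, \nabla u\rangle\, dx \;=\; -\frac{1}{r}\int \phi'(|x-x_0|/r)\, u\, \bigl\langle \mathbb{A}\nabla u,\, \tfrac{x-x_0}{|x-x_0|}\bigr\rangle\, dx,
\]
i.e.\ the $\mathbb{A}$-weighted analogues of $D_u$ and $G_u$ coincide.

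Next, using (H1) and Morrey's embedding, $|\mathbb{A}(x)-\mathbb{A}(x_0)|\le C r^\alpha$ on $B_r(x_0)$. After a preliminary linear change of coordinates that brings $\mathbb{A}(x_0)$ into a normalized form (compatible with (H3) so that the thin hyperplane is preserved), the two weighted quantities above may be compared with the Euclidean $D_u$ and $G_u$, yielding
\[
\bigl|G_u(x_0,r) - D_u(x_0,r)\bigr| \le C r^\alpha D_u(x_0,r) + C r^\alpha \sqrt{E_u(x_0,r)\, H_u(x_0,r)},
\]
where the last term is controlled via Cauchy--Schwarz and the bound $E_u \lesssim r^{-1}D_u$ implicit in the definitions. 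Dividing by $H_u$ and using the definition of $I_u$ gives
\[
\biggl|\partial_r \log H_u(x_0,r) - \frac{n + 2\,I_u(x_0,r)}{r}\biggr| \le \frac{C\,(1+I_u(x_0,r))\, r^\alpha}{r}.
\]
Invoking (H4) to bound $I_u(x_0,r)\le \mathfrak{m}_0$ and Lemma~\ref{l:freq lb} to bound $I_u(x_0,r)\ge c_0>0$ for $r\le\varrho$, I integrate this differential inequality from $r$ to $2r$:
\[
n\log 2 - C \varrho^\alpha \;\le\; \log\frac{H_u(x_0,2r)}{H_u(x_0,r)} \;\le\; (n+2\mathfrak{m}_0)\log 2 + C\varrho^\alpha,
\]
which, upon choosing $\varrho$ small depending on $\mathfrak{m}_0$, produces the two-sided doubling of $H_u$.

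For the doubling of $D_u$, the lower bound $D_u(x_0,2r)\ge D_u(x_0,r)$ is automatic from the monotonicity identity
\[
\partial_r D_u(x_0,r) = \frac{1}{r^2}\int \bigl(-\phi'(|x-x_0|/r)\bigr)\, |x-x_0|\,|\nabla u|^2\, dx \;\ge\; 0,
\]
since $-\phi'\ge 0$. The upper bound follows by writing $D_u(x_0,r) = I_u(x_0,r)\,H_u(x_0,r)/r$ and combining the doubling of $H_u$ with the upper bound $I_u\le \mathfrak{m}_0$ (from (H4)) and the positive lower bound on $I_u$ (from Lemma~\ref{l:freq lb}).

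The main obstacle is the third step: one must carefully absorb the error terms generated by the variable coefficients $\mathbb{A}$, in particular keeping track that the perturbative contributions are genuinely of order $r^\alpha$ with $\alpha=1-(n+1)/p>0$, and that the non-Euclidean character of $\mathbb{A}(x_0)$ does not spoil the leading identification $G_u \approx D_u$. The Cauchy--Schwarz bound $G_u^2\le E_u H_u$ and the regularity of $u$ from Theorem~\ref{t:reg} are the key ingredients that make this absorption clean.
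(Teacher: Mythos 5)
Your strategy and the paper's are genuinely different: the paper proves the doubling by a compactness/contradiction argument (rescaling by $2r_j$, passing to a constant-coefficient limit $v_\infty$ via Proposition~\ref{c:compactness}, and invoking the known doubling for constant-coefficient solutions from \cite[Corollary~2.8]{FS18-1}), whereas you attempt a direct ODE argument by integrating a differential inequality for $\log H_u$. Unfortunately your route has a circularity that is not addressed, and it is precisely the one the paper's ordering of lemmas is designed to avoid.

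The gap is in the estimate ``$E_u\lesssim r^{-1}D_u$ implicit in the definitions.'' It is not true at the same scale. Since $-\phi'(\cdot/r)$ is supported on the annulus $B_r(x_0)\setminus B_{r/2}(x_0)$ while $\phi(\cdot/r)$ vanishes at $\partial B_r(x_0)$ (and in fact $\phi(t)\sim(1-t)^2$ near $t=1$ because $\phi\in C^{1,1}$ with $\phi(1)=\phi'(1)=0$), the ratio $-\phi'/\phi$ blows up near the outer boundary, so
\[
E_u(x_0,r)\ \lesssim\ \frac{1}{r}\int_{B_r(x_0)\setminus B_{r/2}(x_0)}|\nabla u|^2\,dx\ \lesssim\ \frac{1}{r}\,D_u(x_0,2r),
\]
and there is no way to replace $D_u(x_0,2r)$ by $D_u(x_0,r)$ without knowing the doubling. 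The same issue is already present in the estimate for $|G_u-D_u|=|\epsilon_D|$: testing the equation with $u\,\phi(\cdot/r)$ produces the annulus term $r^{-1}\int_{B_r\setminus B_{r/2}}|u|\,|\nabla u|$, which is naturally controlled by $r^{-1/2}H_u^{1/2}(x_0,r)\,D_u^{1/2}(x_0,2r)$ — again at scale $2r$. Writing $D_u(x_0,2r)/H_u(x_0,r)=\tfrac{I_u(x_0,2r)}{2r}\cdot\tfrac{H_u(x_0,2r)}{H_u(x_0,r)}$, the error term in your differential inequality for $\log H_u$ thus involves the unknown ratio $H_u(x_0,2r)/H_u(x_0,r)$, which is exactly what you are trying to bound. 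So the integration step cannot be carried out as stated: one would need a separate continuity or Gronwall-type bootstrap to close it, and none is provided.

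In the paper this circularity is resolved by proving Proposition~\ref{p:doubling} first via compactness, and only then deriving the $\epsilon_D$, $\epsilon_{D'}$ estimates in Lemma~\ref{l:stima J1} (whose proof explicitly invokes Corollary~\ref{c:freq ben def}, i.e.\ the already-established doubling), and the monotonicity of $H_u$ and $I_u$ afterwards. Note also that you already rely on Lemma~\ref{l:freq lb}, whose proof in the paper is itself a compactness argument; so the compactness machinery is unavoidable, and once it is in play, obtaining the doubling the same way is the natural choice. The observations you make about $D_u(x_0,2r)\ge D_u(x_0,r)$ and the identity $D_u=I_u H_u/r$ for the $D$-doubling upper bound are correct and essentially match the paper's proof, but they rest on the $H_u$-doubling that your argument does not actually establish.
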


\begin{proof}
First note that by the very definition $D_u(x_0,r)\leq D_u(x_0,2r)$. 
Assume by contradiction that there exist solutions $u_j$, points
$x_j\in \Gamma(u_j)\cap B'_{\sfrac12}$ with $\mathfrak{m}(x_j)\leq \mathfrak{m}_0$ for some $\mathfrak{m}_0<\infty$
as in the statement and radii $r_j\downarrow 0$ such that 
\[
\lim_j\frac{H_{u_j}(x_j,2r_j)}{H_{u_j}(x_j,r_j)}\in\{0,\infty\},
\quad\text{and/or}\quad
\lim_j\frac{D_{u_j}(x_j,2r_j)}{D_{u_j}(x_j,r_j)}=\infty.
\]
For $j$ sufficiently large $I_{u_j}(x_j,2r_j)\leq \mathfrak{m}_0$,  
thus by Corollary~\ref{c:compactness}, up to a subsequence not relabeled, $v_j:=(u_j)_{x_j,2r_j}$ 
converges in $C^1(B_1)$ to some function $v_\infty$ that minimizes
\[
v\mapsto \int_{B_1}\langle\mA(x_\infty)\nabla v,\nabla v\rangle dx,
\]
among all functions $v\in v_\infty+H^{1}_0(B_1)$ with $v(x',0)\geq 0$ on $B_1'$.
In particular, we conclude that 
\[
\lim_{j\to\infty}\frac{H_{u_j}(x_j,2r_j)}{H_{u_j}(x_j,r_j)}=\frac{H_{v_\infty}(1)}{H_{v_\infty}(\sfrac12)}\in [C^{-1}, C],
\]
and 
\[
\lim_{j\to\infty}\frac{D_{u_j}(x_j,2r_j)}{D_{u_j}(x_j,r_j)}=\frac{D_{v_\infty}(1)}{D_{v_\infty}(\sfrac12)}\in [1,C],
\]
for some constant $C>0$ (depending on $\mathfrak{m}_0$) because of the doubling estimates satisfied by $v_\infty$ which is a solution to an obstacle problem with constant coefficients and frequency $I_{v_\infty}(1)$ bounded by $\mathfrak{m}_0$ (e.g., cf. \cite[Corollary~2.8]{FS18-1}). This gives the desired contradiction.
\end{proof}

In a similar fashion, the frequency computed at nearby points can be compared at radii that are bigger than the distance between the points.

\begin{lemma}\label{l:lim uniforme}
{For every $\mathfrak{m}_0>0$ there exist constants $\varrho, C>0$ depending on
$\mathfrak{m}_0$ with this property. If $u$ is a solution to \eqref{e:prob} under assumptions (H1)-(H3), 
for every $x_0 \in \Gamma(u)\cap B_{\sfrac12}'$ satisfying (H4) with $\mathfrak{m}(x_0)\leq \mathfrak{m}_0$, then}
for all $r\in(0,\varrho]$, $x\in B'_{\sfrac r2}(x_0)$, and $t\in[r,\varrho]$
\begin{gather}\label{e:H limitato}
\frac{H_u(x,t)}{H_u(x_0,t)},\quad
\frac{D_u(x,t)}{D_u(x_0,t)} \in [C^{-1},C].
\end{gather}
In particular, the frequency function of $u$ is well-defined at every point $x\in B_{\sfrac r2}'(x_0)$ at the scales $t\in[r,\varrho]$  and
\begin{equation}\label{e:I limitato}
\big| I_u(x_0,t) - I_u(x,t) \big|\leq C.
\end{equation}
\end{lemma}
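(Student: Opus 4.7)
I would follow the contradiction-and-compactness scheme of Proposition~\ref{p:doubling}. Negating the conclusion produces sequences of solutions $u_j$, free boundary points $x_{0,j}\in\Gamma(u_j)\cap B'_{\sfrac12}$ with $\mathfrak{m}(x_{0,j})\leq\mathfrak{m}_0$, scales $r_j\leq t_j\downarrow 0$, and points $x_j\in B'_{r_j/2}(x_{0,j})$ for which at least one of the ratios in \eqref{e:H limitato} escapes every fixed interval $[C^{-1},C]$. The decisive move is to rescale at the frequency scale $t_j$ rather than at the distance scale $r_j$, defining $v_j:=(u_j)_{x_{0,j},t_j}$: the shifted points $y_j:=(x_j-x_{0,j})/t_j$ then lie in $\overline{B'}_{\sfrac12}$, and $I_{v_j}(\underline{0},1)=I_{u_j}(x_{0,j},t_j)\leq\mathfrak{m}_0$ is uniformly bounded.

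By Corollary~\ref{c:compactness} I would extract a subsequence along which $y_j\to y_\infty\in\overline{B'}_{\sfrac12}$ and $v_j\to v_\infty$ in $C^{1,\gamma}_{\loc}$, with $v_\infty$ solving a constant-coefficient thin obstacle problem and $H_{v_\infty}(\underline{0},1)=1$. A direct change-of-variables computation shows
\[
\frac{H_{u_j}(x_j,t_j)}{H_{u_j}(x_{0,j},t_j)}=H_{v_j}(y_j,1), \qquad \frac{D_{u_j}(x_j,t_j)}{D_{u_j}(x_{0,j},t_j)}=\frac{D_{v_j}(y_j,1)}{D_{v_j}(\underline{0},1)},
\]
and local $C^{1,\gamma}$ convergence passes these ratios to the limits $H_{v_\infty}(y_\infty,1)$ and $D_{v_\infty}(y_\infty,1)/D_{v_\infty}(\underline{0},1)$.

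The contradiction would be closed by showing that these limits are finite and strictly positive. Finiteness is automatic from $v_\infty\in C^{1,\gamma}$, and $D_{v_\infty}(\underline{0},1)>0$ is provided by Lemma~\ref{l:freq lb} applied to $v_j$. The crux is positivity of $H_{v_\infty}(y_\infty,1)$ and $D_{v_\infty}(y_\infty,1)$: if either vanished, then $v_\infty$ (resp.\ $\nabla v_\infty$) would vanish on the open annular support of $\phi'(|\cdot-y_\infty|)$ (resp.\ $\phi(|\cdot-y_\infty|)$), and interior analyticity of $v_\infty$ as a solution of a constant-coefficient elliptic equation, combined with unique continuation, would force $v_\infty\equiv 0$ in $B_1^+$, contradicting $H_{v_\infty}(\underline{0},1)=1$. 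Once \eqref{e:H limitato} is established, \eqref{e:I limitato} follows at once: combining the two comparisons gives $I_u(x,t)\leq C^2\,I_u(x_0,t)\leq C^2\mathfrak{m}_0$, so $|I_u(x_0,t)-I_u(x,t)|\leq(1+C^2)\mathfrak{m}_0$.

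The main obstacle I foresee is a modest technical point about the domain of convergence: the annular supports of $\phi(|\cdot-y_\infty|)$ and $\phi'(|\cdot-y_\infty|)$ reach into $\overline{B_{\sfrac32}}$, whereas Corollary~\ref{c:compactness} as formulated provides $C^{1,\gamma}$ convergence only on $B_1^+\cup B_1'$. I would resolve this either by rescaling by $2t_j$ in place of $t_j$ (so that the relevant supports are pushed into $\overline{B_{\sfrac34}}\subset B_1$), or by using Proposition~\ref{p:doubling} to propagate the $L^2$ bound on $v_j$ from scale one to scale two and then invoking Theorem~\ref{t:reg} on the larger ball to enlarge the domain of convergence before applying the unique continuation argument.
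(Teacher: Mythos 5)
Your proposal is correct and follows essentially the same route as the paper: contradiction, rescaling at $(x_{0,j},t_j)$, the compactness of Proposition~\ref{c:compactness}, identification of the limiting ratios as $H_{v_\infty}(y_\infty,1)$ and $D_{v_\infty}(y_\infty,1)/D_{v_\infty}(\underline 0,1)$, and the analyticity/unique continuation argument to rule out vanishing, with Lemma~\ref{l:freq lb} supplying $D_{v_\infty}(\underline 0,1)>0$. Your observation about the domain of convergence (the supports of $\phi(|\cdot-y_\infty|)$ and $\phi'(|\cdot-y_\infty|)$ spilling out of $B_1$) is a genuine technicality that the paper glosses over by simply asserting convergence in $C^{1,\beta}_{\loc}(B_2)$; rescaling by $2t_j$, as you suggest, is the clean fix.
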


\begin{remark}
We stress that the conclusions of Lemma \ref{l:lim uniforme} hold even for points $x$ not necessarily in the free boundary.
\end{remark}

\begin{proof}
The proof proceeds analogously to the previous ones  by contradiction: assume there exist functions $u_j$, points $z_j\in \Gamma(u_j)\cap B_{\sfrac12}'$ with $\mathfrak{m}(z_j)\leq \mathfrak{m}_0 <\infty$ and $x_j\in B'_{\sfrac{r_j}2}(z_j)$ contradicting one of the two sets of inequalities in \eqref{e:H limitato} for some sequence
$t_j\in[r_j,\varrho_j]$ with $\varrho_j\downarrow0$.
As above, we can apply Corollary~\ref{c:compactness}, thus
(up to passing to a subsequence not relabeled) there exists $v_\infty$ 
such that $v_j:=(u_j)_{z_j,t_j}\to v_\infty$ in $C^{1,\beta}_{\textup{loc}}(B_2)$ with $v_\infty$ solution to the thin obstacle problem {with matrix fields $\mA(x_\infty)$}.
Clearly, we may also assume that $t_j^{-1}(x_j-z_j)\to y_\infty\in \bar B'_{\sfrac12}$ (note that $\sfrac{r_j}{t_j}\leq 1$).

Assume now that the first set of inequalities in \eqref{e:H limitato} is contradicted, {\ie}
\[
\lim_{j}\frac{H_{u_j}(x_j,t_j)}{H_{u_j}(z_j,t_j)} \in \{0,\infty\}.
\]
By the convergence of $v_j$ to $v_\infty$ we then deduce that 
\[
H_{v_\infty}(y_\infty,1)=\lim_jH_{v_j}(t_j^{-1}(x_j-z_j),1)
=\lim_j\frac{H_{u_j}(x_j,t_j)}{H_{u_j}(z_j,t_j)}\,. 
\]
Thus $H_{v_\infty}(y_\infty,1)\in \{0,\infty\} \cap \R = \{0\}$.
Given that $v_\infty$ is analytical in $B_2\setminus \{x_{n+1} =0\}$,
by unique continuation we conclude that $v_\infty\equiv 0$ in $B_2$, against 
the assumption $H_{v_\infty}(1) = \lim_j H_{v_j}(1) =1$.

In case the second set of inequalities in \eqref{e:H limitato} is contradicted, {\ie}
\[
\lim_{j}\frac{D_{u_j}(x_j,t_j)}{D_{u_j}(z_j,t_j)} \in \{0,\infty\}.
\]
we have on one hand that $D_{v_\infty}(1)=\lim_jD_{v_j}(1)=\lim_jI_{u_j}(z_j,t_j)\in[C,\mathfrak{m}_0]$, where $C$ is the constant of the lower bound found in Lemma \ref{l:freq lb}
and $\mathfrak{m}_0$ is the upper bound for the $\mathfrak{m}(z_j)$.
On the other hand we have that 
\[
D_{v_\infty}(y_\infty,1)=\lim_jD_{v_j}(t_j^{-1}(x_j-z_j),1)
=\lim_jt_j\frac{D_{u_j}(x_j,t_j)}{H_{u_j}(z_j,t_j)} 
=\lim_jI_{u_j}(z_j,t_j)\frac{D_{u_j}(x_j,t_j)}{D_{u_j}(z_j,t_j)}\,.
\]
Therefore, under the contradiction assumption we infer that $D_{v_\infty}(y_\infty,1)\in \{0, \infty\}\cap\R=\{0\}$ and taking into account the analiticity of the solutions the last 
equality implies $v_\infty\equiv 0$, which is a contradiction.

Finally, as a byproduct of the first estimate in \eqref{e:H limitato} 
the frequency function is well-defined for $t\in[r,\varrho]$, provided that 
$x\in B_{\sfrac r2}'(x_0)$.
Moreover, \eqref{e:I limitato} follows straightforwardly from \eqref{e:H limitato}:
\begin{align*}
\Big\vert I_u(x_0,t) - I_u(x,t) \Big\vert & = \Big\vert I_u(x_0,t)
\left(1-\frac{D_u(x,t)}{D_u(x_0,t)}\cdot\frac{H_u(x_0,t)}{H_u(x,t)} \right)  
\Big\vert\leq C. \qedhere
\end{align*}
\end{proof}

As an immediate consequence, the doubling estimates hold not only for the points on the free boundary, but also for nearby points at suitable scales. {This information will be crucial to bound error terms in the almost monotonicity formulas in the sequel (cf.  \eqref{e:epsilon D}, \eqref{e:e_Dprime}).} 

\begin{corollary}\label{c:freq ben def}
{For every $\mathfrak{m}_0>0$ there exist constants $\varrho, C>0$ depending on
$\mathfrak{m}_0$ with this property. If $u$ is a solution to \eqref{e:prob} under assumptions (H1)-(H3), 
for every $x_0 \in \Gamma(u)\cap B_{\sfrac12}'$ satisfying (H4) with $\mathfrak{m}(x_0)\leq \mathfrak{m}_0$, then} for 
every $r\in(0,\varrho]$, $x\in B'_{\sfrac r2}(x_0)$, and $t\in[r,\varrho]$
\begin{equation}\label{e:H doublingbis}
\frac{H_u(x,2t)}{H_u(x,t)},\;
\frac{D_u(x,2t)}{D_u(x,t)}\leq [C^{-1},C].
\end{equation}
\end{corollary}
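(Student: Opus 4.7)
The statement is essentially a corollary obtained by chaining Lemma~\ref{l:lim uniforme} with Proposition~\ref{p:doubling}, so the plan is largely bookkeeping about the scale ranges.

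First I would fix $\mathfrak{m}_0$ and take $\varrho$ to be the minimum of the two radii produced (up to a factor of $2$) by Proposition~\ref{p:doubling} and Lemma~\ref{l:lim uniforme} applied at $x_0\in\Gamma(u)\cap B'_{\sfrac12}$. Concretely, if the larger of those two results yields a radius $\varrho_0$, I replace $\varrho$ with $\varrho_0/2$ so that whenever $t\in[r,\varrho]$ one also has $2t\in[r,\varrho_0]$.

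Next, given $x\in B'_{\sfrac r2}(x_0)$ and $t\in[r,\varrho]$, I would observe that $x$ lies in $B'_{\sfrac{t}{2}}(x_0)$ (because $r\leq t$), and also in $B'_{t}(x_0)=B'_{(2t)/2}(x_0)$. Thus Lemma~\ref{l:lim uniforme} applies both with parameter $t$ (using $r$ as the ``small'' radius) and with parameter $2t$ (using $2t$ itself as the ``small'' radius, since the lemma only requires $x$ to sit in a ball of comparable size). This gives
\[
\frac{H_u(x,t)}{H_u(x_0,t)},\;\frac{H_u(x,2t)}{H_u(x_0,2t)},\;\frac{D_u(x,t)}{D_u(x_0,t)},\;\frac{D_u(x,2t)}{D_u(x_0,2t)}\in[C^{-1},C],
\]
with $C=C(\mathfrak{m}_0)$.

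Finally, since Proposition~\ref{p:doubling} gives $H_u(x_0,2t)/H_u(x_0,t),\,D_u(x_0,2t)/D_u(x_0,t)\in[C^{-1},C]$ for the same range $t\in(0,\varrho_0]$, I would factor
\[
\frac{H_u(x,2t)}{H_u(x,t)}=\frac{H_u(x,2t)}{H_u(x_0,2t)}\cdot\frac{H_u(x_0,2t)}{H_u(x_0,t)}\cdot\frac{H_u(x_0,t)}{H_u(x,t)},
\]
and analogously for $D_u$, to obtain the desired two-sided bound with a (larger) constant $C=C(\mathfrak{m}_0)$. There is no real obstacle here beyond verifying that the hypotheses of the two preceding results cover the scales and base points needed; the only tiny point of care is shrinking $\varrho$ to ensure $2t$ still falls in the admissible range of both Proposition~\ref{p:doubling} and Lemma~\ref{l:lim uniforme}.
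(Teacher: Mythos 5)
Your proposal is correct and follows the same route as the paper, which simply notes that the corollary ``follows straightforwardly from Proposition~\ref{p:doubling} and Lemma~\ref{l:lim uniforme} once the constants are chosen in such a way to apply the above mentioned results.'' You have merely made explicit the triangle-type factorization through the base point $x_0$ and the shrinking of $\varrho$ by a factor of $2$ so that both scales $t$ and $2t$ remain admissible, which is exactly the bookkeeping the paper leaves to the reader.
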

\begin{proof}
The proof follows straightforwardly from Proposition \ref{p:doubling} and 
Lemma \ref{l:lim uniforme} once the constants 
are chosen in such a way to apply the above mentioned results.
\end{proof}

\subsection{Almost monotonicity of the frequency}

By means of the doubling estimates established in Proposition~\ref{p:doubling}, and arguing similarly to \cite[Theorem~2.2]{FoGerSp18}, we can conclude almost monotonicity of the frequency $I_u(x_0,\cdot)$ under the condition
$\mathbb{A}(x_0)=\textup{Id}$.
In fact, we establish quasi-monotonicity for all points $x_0$ and radii $r\geq (\mathfrak{a}(x_0))^{\frac{1}{\alpha}}$, recall that $\alpha=1-\frac{n+1}{p}$, provided that
\[
\mathfrak{a}(x_0):=|\mA(x_0)-\textup{Id}|.
\]
Notice that $\mathfrak{a}(x_0)=0$ if and only if $\mA(x_0)=\textup{Id}$.
We start off proving a couple of preliminary results. 
In what follows, differentiation with respect to the radius shall be denoted by a prime and we write
\begin{align}\label{e:variationG}
\epsilon_D(x,t):= G_u(x,t)- D_u(x,t)\,,
\end{align}
\begin{align}\label{e:variationDprime}
\epsilon_{D'}(x,t):= tD_u'(x,t) - (n-1)D_u(x,t)- 2tE_u(x,t)\,.
\end{align}

\begin{lemma}\label{l:stima J1}
{For every $\mathfrak{m}_0>0$ there exist constants $\varrho, C>0$ depending on
$\mathfrak{m}_0$ with this property. If $u$ is a solution to \eqref{e:prob} under assumptions (H1)-(H3), 
for every $x_0 \in \Gamma(u)\cap B'_{\sfrac12}$ satisfying (H4) with $\mathfrak{m}(x_0)\leq \mathfrak{m}_0$, then} for 
every $r\in (0,\varrho]$, $x\in B'_{\sfrac r2}(x_0)$, and $t\in [r,\varrho]$
\begin{align}\label{e:variationD errore}
|\epsilon_D(x,t)|\leq C &\big([\mA]_{0,\alpha}(|x-x_0|+t)^\alpha+\mathfrak{a}(x_0)\big)\notag\\
&\qquad\Big(D_u(x,t)+t^{-\sfrac12}H_u^{\sfrac12}(x,t)D_u^{\sfrac12}(x,t)\Big)\,,
\end{align}
and
\begin{align}\label{e:variationD prime errore}
|\epsilon_{D'}(x,t)|\leq C\big([\mA]_{0,\alpha}(|x-x_0|+t)^\alpha+\mathfrak{a}(x_0)\big) D_u(x,t)\,.
 \end{align}
\end{lemma}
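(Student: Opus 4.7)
The plan is to establish both estimates by deriving integral identities through testing the Euler--Lagrange equation $\div(\mA\nabla u) = 0$ against suitable cutoffs, then splitting $\mA = \Id + (\mA - \Id)$ to isolate $\epsilon_D$ and $\epsilon_{D'}$ as perturbation terms which vanish when $\mA \equiv \Id$. The Signorini complementary condition $u\,(\mA\nabla u \cdot e_{n+1}) = 0$ on $B_1'$, together with $\nabla' u = 0$ on the contact set, ensures that the singular part of $\div(\mA\nabla u)$ does not contribute to the integrations by parts performed below. Throughout, for $y \in \spt\phi$ one has the pointwise bound $|\mA(y) - \Id| \leq \eta := [\mA]_{0,\alpha}(t + |x-x_0|)^\alpha + \mathfrak{a}(x_0)$ by H\"older continuity of $\mA$ and the triangle inequality.

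For \eqref{e:variationD errore}, testing against $\phi\,u$ yields the Rellich-type identity $\int\phi\,\langle\mA\nabla u,\nabla u\rangle\,dy = -\int u\,\nabla\phi \cdot \mA\nabla u\,dy$; comparing with the Euclidean definitions of $D_u$ and $G_u$ gives
\[
\epsilon_D(x,t) = \int \phi\,\langle(\mA - \Id)\nabla u,\nabla u\rangle\,dy + \int u\,\nabla\phi \cdot (\mA - \Id)\nabla u\,dy.
\]
The first integral is immediately $\leq C\eta\, D_u$. For the second, Cauchy--Schwarz combined with the observation $|y-x| \geq t/2$ on $\spt\phi'$ yields $\int|\phi'|u^2\,dy \leq 2tH_u$, while the doubling estimate from Corollary~\ref{c:freq ben def} gives $\int|\phi'||\nabla u|^2\,dy \leq CD_u(x,2t) \leq CD_u(x,t)$; together these produce the term $C\eta\,t^{-1/2}H_u^{1/2}D_u^{1/2}$.

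For \eqref{e:variationD prime errore}, testing against $\phi\,(y-x)\cdot\nabla u$ yields a Pohozaev-type identity. Splitting $\mA = \Id + (\mA - \Id)$ and recognizing that $tD_u'$, $(n-1)D_u$, $2tE_u$ form the Euclidean residual expresses $\epsilon_{D'}$ as a linear combination of first-derivative error terms such as $\int\phi\,\langle(\mA-\Id)\nabla u,\nabla u\rangle\,dy$, $\int(\nabla\phi\cdot(y-x))\,\langle(\mA-\Id)\nabla u,\nabla u\rangle\,dy$, and $\int((\mA-\Id)\nabla u\cdot\nabla\phi)\,((y-x)\cdot\nabla u)\,dy$, each bounded by $C\eta\,D_u$ using the pointwise bound on $|\mA-\Id|$, Cauchy--Schwarz, and doubling (crucially, no $H_u$ factor arises here since no factor of $u$ is present), plus the delicate bulk term $\int\phi\,((y-x)\cdot\nabla\mA)(\nabla u,\nabla u)\,dy$.

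The bulk term is the main obstacle, since $\nabla\mA\in L^p\not\subset L^\infty$ cannot be bounded pointwise. The resolution is to integrate by parts once more in $y_j$, writing $\partial_j a_{ik} = \partial_j(a_{ik}-a_{ik}(x_0))$, which transfers the derivative off $\mA$ onto the remaining factors. This produces additional first-derivative terms weighted by $(\mA - \mA(x_0))$ (bounded by $C\eta\,D_u$ as above) plus a second-derivative contribution estimated via Cauchy--Schwarz by $C\eta\,t\,D_u^{1/2}\bigl(\int_{B_t(x)}|\nabla^2 u|^2\bigr)^{1/2}$. The last factor is controlled by a Caccioppoli-type estimate: applying the $H^2$ regularity from Theorem~\ref{t:reg} to the tangential derivatives of $u$, using the PDE to express $\partial_{n+1}^2 u$ in terms of lower-order derivatives of $u$ and $\nabla \mA$, and rescaling to $B_t(x)$, we obtain $\int_{B_t(x)}|\nabla^2 u|^2 \leq C t^{-2}D_u(x,t)$ after invoking doubling. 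Hence $\bigl(\int|\nabla^2 u|^2\bigr)^{1/2}\leq Ct^{-1}D_u^{1/2}$, and the prefactor $t$ cancels the $t^{-1}$, producing the clean bound $C\eta\,D_u$ with no spurious $H_u$ contribution. The delicate point is this final balancing of scales, which forces the use of the $H^2$ regularity in a form controlled by $D_u$ rather than by $H_u$.
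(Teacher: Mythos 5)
Your treatment of \eqref{e:variationD errore} is essentially identical to the paper's: both test the equation with $u\,\phi$, isolate $\epsilon_D = \int\mB\nabla u\cdot\nabla(u\phi)$ with $\mB := \mA-\Id$, bound the $\phi$-term by $\eta\,D_u$ and the $\phi'$-term by Cauchy--Schwarz giving $\eta\, t^{-1/2}H_u^{1/2}D_u^{1/2}(x,2t)$, then invoke doubling.

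For \eqref{e:variationD prime errore} your argument is correct but takes a self-cancelling detour. After splitting off the Euclidean Pohozaev identity, the residual is exactly $\int\mB\nabla u\cdot\nabla w$ with $w = \phi\,(z-x)\cdot\nabla u$, and expanding $\nabla w$ gives only three contributions: a $\phi'$-boundary term, $\int\phi\,\langle\mB\nabla u,\nabla u\rangle$, and the second-derivative term $\int\phi\,\mB\nabla u\cdot\nabla^2 u\,(z-x)$. No $\nabla\mA$ term appears here. The paper bounds the last piece directly by Cauchy--Schwarz and the rescaled $H^2$ estimate of Theorem~\ref{t:reg}, $t\,\|\nabla^2 u\|_{L^2(B_t(x))}\leq C\,D_u^{1/2}(x,2t)$, then uses doubling. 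Your proposal instead first integrates by parts on this piece (via $\mB_{ik}\,\partial_k u\,\partial_i\partial_j u = \tfrac12\partial_j(\mB_{ik}\partial_k u\partial_i u) - \tfrac12\partial_j\mB_{ik}\partial_k u\partial_i u$), which manufactures the problematic $\int\phi\,(z-x)\cdot\nabla\mA(\nabla u,\nabla u)$ bulk term, and then integrates by parts a second time to remove it --- landing you back at a $\nabla^2 u$ contribution bounded by the same $H^2$ estimate. The round trip is unnecessary; its only effect is to generate (and then absorb) the $\nabla\mA$ term. Your ``crucial'' observation that no $H_u$ factor appears since the test function carries no factor of $u$ is correct and matches the paper. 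Net assessment: the proof is sound and uses the same key ingredients ($H^2_{\loc}$ regularity, doubling, pointwise bound $|\mB|\leq\eta$ on $\spt\phi$), but the second half can be streamlined by never introducing $\nabla\mA$.
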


\begin{proof}
Without loss of generality we suppose $x_0=\underline{0}$.
Fix a point $x\in B_{\sfrac12}'$, and set $\mB(z):=\mA(z)-\textrm{Id}$ {for every $z\in B_1'$}.
To infer \eqref{e:variationD errore} and \eqref{e:variationD prime errore} we consider the equation satisfied by $u$ and test it
with a suitable function. 

We start noticing that
\begin{align}\label{e:e_D}
\epsilon_D(x,t) & = G_u(x,t)-D_u(x,t)\notag\\
&=-\int\nabla u(z)\cdot \nabla\Big(u(z)\phi\Big({\textstyle{\frac{|z-x|}t}}\Big)\Big)\d z\notag\\
&=\int\mB(z)\nabla u(z)\cdot \nabla\Big(u(z)\phi\Big({\textstyle{\frac{|z-x|}t}}\Big)\Big)\d z,
\end{align}
where in the last equality we use that $u$ is a solution to \eqref{e:ob-pb local} (tested with $u(z)\phi\big({\textstyle{\frac{|z-x|}t}}\big)$) and Signorini ambiguous boundary conditions.
In order to prove \eqref{e:variationD errore}, we use \eqref{e:e_D}: for $\varrho$ sufficiently small, by the H\"older inequality we get
\begin{align}\label{e:epsilon D}
|\epsilon_D(x,t)|&\leq\big( [\mA]_{0,\alpha}(|x|+t)^\alpha+\mathfrak{a}(x_0)\big)\notag\\
&\qquad\Big(D_u(x,t)-t^{-1} \int_{B_t(x)\setminus B_{\sfrac t2}(x)}
\phi'\Big({\textstyle{\frac{|z-x|}t}}\Big)|u(z)||\nabla u(z)|\d z\Big)\notag\\
&\leq C \big( [\mA]_{0,\alpha}(|x|+t)^\alpha+\mathfrak{a}(x_0)\big)
\Big(D_u(x,t)+t^{-\sfrac12}H_u^{\sfrac12}(x,t)D_u^{\sfrac12}(x,2t)\Big) \,,
 \end{align}
with $C=C(\|\phi'\|_\infty)$.
We conclude the estimate in \eqref{e:variationD errore} in view of Corollary~\ref{c:freq ben def} 
(cf. \eqref{e:H doublingbis}).

To prove \eqref{e:variationD prime errore} we argue similarly, and test the equation with
the function $w$ defined as the even extension across $B_1'$ of the restriction to $B_1^+$ 
of $\phi\big({\textstyle{\frac{|z-x|}t}}\big)\nabla u(z)\cdot (z-x)$. Note that $w$ is an 
admissible test in view of the $H^2_{\loc}(B_1^\pm\cup B_1')$ regularity of $u$. Additionally, (H3) and Signorini's ambiguous boundary conditions imply that $(\mA(z)\nabla u(z)\cdot e_{n+1})\nabla u(z)\cdot (z-x)={a_{n+1,n+1}(z)\partial_{n+1}u(z)=}0$ on $B_1'$.
In view of this we compute explicitly  using the divergence theorem
\begin{align}\label{e:e_Dprime}
 \int&\mB(z)\nabla u(z)\cdot \nabla w\d z=\int\nabla u(z) \cdot \nabla w\,\d z\notag\\
 &=D_u(x,t)+\frac12\int\phi\Big({\textstyle{\frac{|z-x|}t}}\Big)\nabla(|\nabla u(z)|^2)\cdot (z-x)\,\d z \notag\\
&\qquad +\frac1t\int\phi'\Big({\textstyle{\frac{|z-x|}t}}\Big){\textstyle{\frac{(\nabla u(z)\cdot (z-x))^2 }{|z-x|}}}\,\d z\notag\\
 &=-\frac{n-1}{2}D_u(x,t)+\frac1{t}\int\phi'\Big({\textstyle{\frac{|z-x|}t}}\Big)
 \Big(-{\textstyle{\frac{|\nabla u(z)|^2}2|z-x|+\frac{(\nabla u(z)\cdot (z-x))^2 }{|z-x|}}}\Big)\d z\notag\\
 &=-\frac{n-1}{2}D_u(x,t)+\frac t2D_u'(x,t)-tE_u(x,t)
= -\epsilon_{D'}(x,t).
\end{align}
To establish \eqref{e:variationD prime errore} we can then proceed similarly as above:
\begin{align*}
 |\epsilon_{D'}(x,t)|\leq&  C\big([\mA]_{0,\alpha}(|x|+t)^\alpha+\mathfrak{a}(x_0)\big)\cdot\Big(D_u(x,t)+\\
&\quad + t\int\phi\big({\textstyle{\frac{|z-x|}t}}\big)|\nabla^2 u(z)||\nabla u(z)|\,\d z
 +\int_{B_t(x)\setminus B_{\sfrac t2}(x)}|\nabla u(z)|^2\d z\Big)\notag\\
 &\leq  C \big([\mA]_{0,\alpha}(|x|+t)^\alpha+\mathfrak{a}(x_0)\big)\cdot\\
&\qquad \cdot\big(D_u(x,t)+D_u^{\sfrac12}(x,t)D_u^{\sfrac12}(x,2t)+D_u(x,2t)\big),
\end{align*}
where we used the $H^2_{\loc}(B_1^\pm)$ regularity estimates of $u$.
The conclusion then follows from the doubling properties 
of $D_u(x,\cdot)$ (cf. Corollary~\ref{c:freq ben def}).
\end{proof}
We establish next a similar result for $H_u(x_0,\cdot)$ together with a quasi-monotonicity formula.

\begin{lemma}\label{l:monotonia H}
{For every $\mathfrak{m}_0>0$ there exist constants $\varrho, C>0$ depending on
$\mathfrak{m}_0$ and $[\mA]_{0,\alpha}$ with this property. If $u$ is a solution to \eqref{e:prob} under assumptions (H1)-(H3), for every $x_0 \in \Gamma(u)\cap B_{\sfrac12}'$ satisfying (H4) with $\mathfrak{m}(x_0)\leq \mathfrak{m}_0$,} if
$(\mathfrak{a}(x_0))^{\sfrac1\alpha}\leq \varrho$, then
\begin{equation}\label{e:monotonia H}
((\mathfrak{a}(x_0))^{\sfrac{1}{\alpha}},\varrho]\ni t\mapsto
\frac{H_u(x_0,t)}{t^n}\cdot\exp\big(C t^\alpha\big)
\quad\text{is nondecreasing,}
\end{equation}
and
\begin{equation}\label{e:monotonia H2}
((\mathfrak{a}(x_0))^{\sfrac{1}{\alpha}},\varrho]\ni t\mapsto
\frac{H_u(x_0,t)}{t^{n+2\mathfrak{m}_0}}\cdot\exp\big(-C t^\alpha\big)
\quad\text{is nonincreasing.}
\end{equation}
In particular, for all $(\mathfrak{a}(x_0))^{\sfrac{1}{\alpha}}\leq r\leq s\leq \varrho$
\begin{equation}\label{e:L2 vs H}
\int_{B_s(x_0)\setminus B_r(x_0)}|u(x)|^2\d x\leq C\,s H_u(x_0,s)\,.
\end{equation}
\end{lemma}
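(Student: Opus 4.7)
The plan is to first compute $H_u'(x_0, \cdot)$ explicitly. After the change of variable $y = (x-x_0)/t$,
\[
H_u(x_0, t) = -t^n \int \phi'(|y|)\, \frac{u(x_0+ty)^2}{|y|}\, \d y,
\]
and direct differentiation followed by reverting the substitution yields
\[
H_u'(x_0, t) = \frac{n}{t} H_u(x_0, t) + 2 G_u(x_0, t) = \frac{n}{t} H_u(x_0, t) + 2 D_u(x_0, t) + 2 \epsilon_D(x_0, t),
\]
the last equality via \eqref{e:variationG}. Since $H_u(x_0, t) > 0$ at free boundary points, dividing by $H_u$ produces
\[
\frac{d}{dt} \log\!\big(t^{-n} H_u(x_0, t)\big) = \frac{2 I_u(x_0, t)}{t} + \frac{2 \epsilon_D(x_0, t)}{H_u(x_0, t)}.
\]

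The crucial estimate is the bound $|\epsilon_D(x_0, t)|/H_u(x_0, t) \leq C\, t^{\alpha-1}$. Applying Lemma~\ref{l:stima J1} at $x = x_0$, and using $t \geq (\mathfrak{a}(x_0))^{1/\alpha}$ to absorb $\mathfrak{a}(x_0) \leq t^\alpha$ into $[\mA]_{0,\alpha} t^\alpha$, I get
\[
|\epsilon_D(x_0, t)| \leq C t^\alpha \big(D_u(x_0, t) + t^{-1/2} H_u^{1/2}(x_0, t) D_u^{1/2}(x_0, t)\big);
\]
dividing by $H_u$ and using $I_u \leq \mathfrak{m}_0$ gives
\[
\frac{|\epsilon_D(x_0, t)|}{H_u(x_0, t)} \leq C t^{\alpha-1}\big(I_u + I_u^{1/2}\big) \leq C'\, t^{\alpha-1},
\]
with $C'$ depending only on $\mathfrak{m}_0$ and $[\mA]_{0,\alpha}$. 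Claim \eqref{e:monotonia H} then follows from $I_u \geq 0$ by choosing the exponential constant equal to $C'/\alpha$, so that $\frac{d}{dt}\log\!\big(t^{-n} H_u \cdot e^{(C'/\alpha) t^\alpha}\big) \geq -C' t^{\alpha-1} + C' t^{\alpha-1} = 0$. For \eqref{e:monotonia H2}, I use $I_u(x_0, t) \leq \mathfrak{m}(x_0) \leq \mathfrak{m}_0$ to conclude
\[
\frac{d}{dt} \log\!\big(t^{-(n+2\mathfrak{m}_0)} H_u(x_0, t)\big) = \frac{2(I_u - \mathfrak{m}_0)}{t} + \frac{2\epsilon_D}{H_u} \leq C' t^{\alpha-1},
\]
and the analogous exponentiation yields the upper monotonicity.

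For the $L^2$ estimate \eqref{e:L2 vs H}, the plan is to exploit the explicit structure \eqref{e:struttura phi}. Setting $t_k := (8/7)(5/7)^k s$, we have $\phi'(|x-x_0|/t_k) = -2$ precisely on the annulus $A_k := B_{(5/7)^k s}(x_0) \setminus B_{(5/7)^{k+1} s}(x_0)$, so
\[
H_u(x_0, t_k) \geq 2 \int_{A_k} \frac{u^2}{|x-x_0|}\, \d x \geq \frac{16}{7 t_k} \int_{A_k} u^2\, \d x.
\]
The annuli $\{A_k\}_{k \geq 0}$ partition $B_s(x_0) \setminus \{x_0\}$; summing over those $k$ for which $A_k \cap (B_s \setminus B_r) \neq \emptyset$ (all corresponding $t_k \geq (\mathfrak{a}(x_0))^{1/\alpha}$ thanks to $r \geq (\mathfrak{a}(x_0))^{1/\alpha}$) and applying the just-proven \eqref{e:monotonia H} to bound $H_u(x_0, t_k) \leq C (t_k/s)^n H_u(x_0, s)$ produces a geometric series of ratio $(5/7)^{n+1} < 1$, whence $\int_{B_s \setminus B_r} u^2 \d x \leq C\, s\, H_u(x_0, s)$.

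The main obstacle is really careful bookkeeping to guarantee that $C$ depends only on $\mathfrak{m}_0$ and $[\mA]_{0,\alpha}$, and that the dyadic chain in the last step terminates before the monotonicity threshold $(\mathfrak{a}(x_0))^{1/\alpha}$ — both aspects are provided by the hypothesis $r \geq (\mathfrak{a}(x_0))^{1/\alpha}$ (and, if needed, by shrinking $\varrho$ so that $t_0 = 8s/7 \leq \varrho$).
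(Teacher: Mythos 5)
Your proof of the two monotonicity statements \eqref{e:monotonia H} and \eqref{e:monotonia H2} is correct and follows exactly the paper's route: compute $H_u'=\frac{n}{t}H_u+2G_u$, identify $\frac{\d}{\d t}\log(t^{-n}H_u)=\frac{2I_u}{t}+\frac{2\epsilon_D}{H_u}$, bound $|\epsilon_D|/H_u\leq C t^{\alpha-1}$ via Lemma~\ref{l:stima J1} (absorbing $\mathfrak{a}(x_0)$ into $t^\alpha$ using $t\geq\mathfrak{a}(x_0)^{1/\alpha}$ and $I_u\leq\mathfrak m_0$), and then use $I_u\geq 0$ for \eqref{e:monotonia H} and $I_u\leq\mathfrak{m}_0$ for \eqref{e:monotonia H2}, integrating in each case.

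For \eqref{e:L2 vs H} the paper says only ``Fubini theorem and \eqref{e:monotonia H}''; your dyadic annulus decomposition adapted to the structure \eqref{e:struttura phi} is a discretized version of the same idea and is equally valid. There is, however, one small slip: for the outermost annulus $k=0$ one has $t_0=\tfrac{8}{7}s>s$, and \eqref{e:monotonia H} runs in the \emph{wrong direction} there — being a statement of nondecreasingness, it yields $H_u(x_0,t_0)\geq c\,H_u(x_0,s)$, not the upper bound $H_u(x_0,t_0)\leq C(t_0/s)^n H_u(x_0,s)$ that you invoke. That one term needs \eqref{e:monotonia H2} (which gives $H_u(x_0,\tfrac{8}{7}s)\leq (\tfrac87)^{n+2\mathfrak m_0}e^{C\varrho^\alpha}H_u(x_0,s)$) or the doubling of Proposition~\ref{p:doubling}; once you bound the $k=0$ term that way the remaining geometric series ($k\geq 1$, where $t_k\leq s$ and \eqref{e:monotonia H} gives the correct direction) closes exactly as you wrote. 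After this patch the argument is complete.
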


\begin{proof}
First note that by scaling and a direct differentiation we easily get \begin{align}\label{e:variationHprime}
H_u'(x,t)&=\frac{n}{t}\,H_u(x,t)
 -2\,t^{-1} \int\phi'\Big({\textstyle{\frac{|z-x|}t}\Big)u(z)\nabla u(z)\cdot \frac{z-x}{|z-x|}}\d z\notag\\
&=\frac{n}{t}\,H_u(x,t)+2G_u(x,t).
\end{align}
We employ equalities \eqref{e:variationG}, \eqref{e:variationHprime} and estimate \eqref{e:variationD errore} (with $x=x_0$)
to deduce that
\begin{align*}
\Big|\frac{\d}{\d t}&\Big(\ln\big({\textstyle{\frac{H_u(x_0,t)}{t^n}}}\big)+2\int_t^{\varrho}{\textstyle{\frac{I_u(x_0,s)}s}}\d s\Big)\Big|= \frac{2|\epsilon_D(x_0,t)|}{H_u(x_0,t)}\\
&\leq  C \big([\mA]_{0,\alpha}t^{\alpha}+\mathfrak{a}(x_0) \big) {t^{-1}}\big(I_u(x_0,t)+
I_u^{\sfrac12}(x_0,t)\big)
\leq C t^{\alpha-1}\,,
\end{align*}
where $C=C([\mA]_{0,\alpha}, \mathfrak{m}_0)>0$ and we have used that
$(\mathfrak{a}(x_0))^{\sfrac1\alpha}<t<1$. The conclusion in
\eqref{e:monotonia H} then follows at once by direct integration.
Similarly, using $I_u(x_0,s)\leq \mathfrak{m}_0$, we have
\begin{align*}
\frac{\d}{\d t}\ln\big({\textstyle{\frac{H_u(x_0,t)}{t^{n+2\mathfrak{m}_0}}}}\big)& \geq \frac{\d}{\d t}\Big(\ln\big({\textstyle{\frac{H_u(x_0,t)}{t^n}}}\big)+2\int_t^{\varrho}{\textstyle{\frac{I_u(x_0,s)}s}}\d s\Big)\geq -C t^{\alpha-1}\,,
\end{align*}
and \eqref{e:monotonia H2} follows.
 
Finally, the proof of \eqref{e:L2 vs H} is a simple consequence of Fubini theorem
by taking advantage of \eqref{e:monotonia H}.
\end{proof}

Thanks to Lemmata~\ref{l:stima J1} and \ref{l:monotonia H} we are now ready to establish the quasi-monotonicity of the frequency function at free boundary points $x_0$ with $\mathfrak{a}(x_0)=0$.

\begin{proposition}\label{p:monotonia freq}
For every $\mathfrak{m}_0>0$ there exist constants $\varrho, C>0$ depending on
$\mathfrak{m}_0$, $[\mA]_{0,\alpha}$ and $\alpha$ with this property. If $u$ is a solution to \eqref{e:prob} under assumptions (H1)-(H3), for every $x_0 \in \Gamma(u)\cap B_{\sfrac12}'$ satisfying (H4) with $\mathfrak{m}(x_0)\leq \mathfrak{m}_0$, if
$\max\{(\mathfrak{a}(x_0))^{\sfrac1\alpha},r\}\leq \varrho$ and $x\in B'_{\sfrac r2}(x_0)$, then
\begin{equation}\label{e:Iuprime}
I_u'(x,t)=\frac{2t}{H_u^2(x,t)}\big(E_u(x,t)H_u(x,t)-G_u^2(x,t)\big)+R_u(x,t)\,,
\end{equation}
and
\begin{equation}\label{e:stimaRut}
 |R_u(x,t)|\leq C\,t^{\alpha-1}I_u(x,t)\qquad
 \forall\;t\in \big[\max\{(\mathfrak{a}(x_0))^{\sfrac1\alpha}, r\},\varrho\big]\,.
\end{equation}
In particular, if $\mathfrak{a}(x_0)=0$, then for every $x\in B'_{\sfrac r2}(x_0)$, $r<\varrho$,
\begin{equation}\label{e:almost monotonicity}
[r,\varrho]\ni t\mapsto e^{C t^{\alpha}}I_u(x,t)
\quad\text{is non-decreasing}.
\end{equation}
\end{proposition}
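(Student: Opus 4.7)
The plan is to compute $I_u'(x,t)$ directly by differentiating the ratio $tD_u/H_u$, then substitute the identities for $D_u'$ and $H_u'$, and finally express everything in terms of the quantities $E_u$, $H_u$, $G_u$ plus controlled error terms coming from Lemma~\ref{l:stima J1}.

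Concretely, first I would differentiate:
\[
I_u'(x,t)=\frac{D_u(x,t)+tD_u'(x,t)}{H_u(x,t)}-\frac{tD_u(x,t)H_u'(x,t)}{H_u^2(x,t)}\,.
\]
Next, using \eqref{e:variationHprime}, i.e.\ $H_u'=\tfrac{n}{t}H_u+2G_u$, and \eqref{e:variationDprime}, i.e.\ $tD_u'-(n-1)D_u=2tE_u+\epsilon_{D'}$, the expression collapses into
\[
I_u'(x,t)=\frac{2tE_u(x,t)}{H_u(x,t)}-\frac{2tD_u(x,t)G_u(x,t)}{H_u^2(x,t)}+\frac{\epsilon_{D'}(x,t)}{H_u(x,t)}\,.
\]
Then, writing $D_u=G_u-\epsilon_D$ via \eqref{e:variationG}, the second term splits into the ``Cauchy--Schwarz'' piece $-2tG_u^2/H_u^2$ and a further error. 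This yields the identity \eqref{e:Iuprime} with
\[
R_u(x,t)=\frac{\epsilon_{D'}(x,t)}{H_u(x,t)}+\frac{2t\,\epsilon_D(x,t)\,G_u(x,t)}{H_u^2(x,t)}\,,
\]
and the first term in \eqref{e:Iuprime} is non-negative by Cauchy--Schwarz.

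To prove \eqref{e:stimaRut}, I would use that $|x-x_0|\leq r/2\leq t/2$ and $\mathfrak{a}(x_0)\leq t^\alpha$ (by the hypothesis $\max\{(\mathfrak{a}(x_0))^{\sfrac1\alpha},r\}\leq t$), so the prefactor in Lemma~\ref{l:stima J1} is controlled by $Ct^\alpha$. The term $\epsilon_{D'}/H_u$ is then bounded by \eqref{e:variationD prime errore} directly as $Ct^{\alpha-1}I_u(x,t)$. For the second piece, I would use the Cauchy--Schwarz bound
\[
|G_u(x,t)|\leq H_u^{\sfrac12}(x,t)E_u^{\sfrac12}(x,t)\leq Ct^{-\sfrac12}H_u^{\sfrac12}(x,t)D_u^{\sfrac12}(x,t)\,,
\]
where the second inequality follows from $E_u\leq Ct^{-1}D_u(x,2t)$ together with the doubling of $D_u$ from Corollary~\ref{c:freq ben def}. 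Plugging this and \eqref{e:variationD errore} into the definition of $R_u$, one obtains a combination of $t^{\alpha-1}I_u$ and $t^{\alpha-1}I_u^{\sfrac32}$ terms; since $I_u(x,t)\leq C\mathfrak{m}_0$ at these scales by Lemma~\ref{l:lim uniforme}, both are absorbed into $Ct^{\alpha-1}I_u(x,t)$.

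Finally, when $\mathfrak{a}(x_0)=0$ the non-negativity of the Cauchy--Schwarz term in \eqref{e:Iuprime} yields $I_u'(x,t)\geq -Ct^{\alpha-1}I_u(x,t)$ on $[r,\varrho]$, so $\tfrac{d}{dt}\log I_u(x,t)\geq -Ct^{\alpha-1}$, and integration gives that $e^{(C/\alpha)t^\alpha}I_u(x,t)$ is non-decreasing, which is \eqref{e:almost monotonicity} after renaming the constant. The main delicate point is the absorption step: one has to verify that the superlinear-in-$I_u$ contribution coming from the product $\epsilon_D\cdot G_u/H_u^2$ is tamed by the a priori frequency bound $\mathfrak{m}_0$ and the doubling of $D_u$ and $H_u$ at the admissible scales, which is precisely where the assumption $t\geq\max\{(\mathfrak{a}(x_0))^{\sfrac1\alpha},r\}$ enters decisively.
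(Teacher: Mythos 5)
Your proposal is correct and follows essentially the same strategy as the paper: differentiate the ratio, substitute the identities from Lemma~\ref{l:stima J1} and \eqref{e:variationHprime} to isolate the Cauchy--Schwarz piece $\tfrac{2t}{H_u^2}(E_uH_u - G_u^2)$, and bound the error $R_u$ using the doubling estimates and the a priori frequency bound from Lemma~\ref{l:lim uniforme}. The only cosmetic difference is in the treatment of the cross-term: you bound $G_u$ directly via Cauchy--Schwarz ($|G_u|\leq H_u^{1/2}E_u^{1/2}\leq Ct^{-1/2}H_u^{1/2}D_u^{1/2}$), whereas the paper substitutes $G_u=D_u+\epsilon_D$ to rewrite $R_u=\tfrac{\epsilon_{D'}}{H_u}+2t\tfrac{\epsilon_D^2}{H_u^2}+2I_u\tfrac{\epsilon_D}{H_u}$ and then bounds each summand; both lead to the same $Ct^{\alpha-1}(I_u+I_u^{3/2})$ and are absorbed in $Ct^{\alpha-1}I_u$ via the boundedness of the frequency.
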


\begin{proof}
Without loss of generality we prove the result in case $x_0=\underline{0}$. 
We take advantage of formulas \eqref{e:variationG}-\eqref{e:variationD prime errore} in Lemma~\ref{l:stima J1} and \eqref{e:variationHprime} in Lemma~\ref{l:monotonia H} to deduce that
\begin{align}\label{e:Iprimeparziale}
 \frac{I_u'(x,t)}{I_u(x,t)}&=\frac 1t+\frac{D_u'(x,t)}{D_u(x,t)}-\frac{H_u'(x,t)}{H_u(x,t)}
\stackrel{\eqref{e:variationDprime}}{=}2\,\frac{E_u(x,t)}{D_u(x,t)}-2\frac{G_u(x,t)}{H_u(x,t)} +\frac{\epsilon_{D'}(x,t)}{t D_u(x,t)} \notag\\
&=\frac{2}{D_u(x,t)H_u(x,t)} \big(E_u(x,t)H_u(x,t)-G_u(x,t)D_u(x,t)\big)+\frac{\epsilon_{D'}(x,t)}{t D_u(x,t)}
\notag\\
&\stackrel{\eqref{e:variationG}}{=}\frac{2}{D_u(x,t)H_u(x,t)}\big(E_u(x,t)H_u(x,t)-G_u^2(x,t)\big)\notag\\ 
&+\,\frac{\epsilon_{D'}(x,t)}{t D_u(x,t)}+2\,\frac{\epsilon_D(x,t)G_u(x,t)}{D_u(x,t)H_u(x,t)}.
\end{align}
Thus, for $t\in(0,\sfrac12)$ we conclude equality \eqref{e:Iuprime}, i.e.
\begin{align}\label{e:freq derivative}
 I_u'(x,t)&-\frac{2t}{H_u^2(x,t)}\big(E_u(x,t)H_u(x,t)-G_u^2(x,t)\big)=
 \frac{\epsilon_{D'}(x,t)}{H_u(x,t)}+2t\frac{G_u(x,t)}{H_u^2(x,t)}\epsilon_D(x,t)\notag\\
&\stackrel{\eqref{e:variationG}}{=} \frac{\epsilon_{D'}(x,t)}{H_u(x,t)}+2t\frac{\epsilon_D^2(x,t)}{H_u^2(x,t)}+2I(x,t)\frac{\epsilon_D(x,t)}{H_u(x,t)}=:R_u(x,t).
 \end{align}
 To estimate $R_u$ we note that, if $t\geq \max\{ (\mathfrak{a}(x_0))^{\sfrac1\alpha},r\}$ and 
{$x\in B'_{\sfrac r2}(x_0)$}, then
\begin{align*}
\Big|\frac{\epsilon_{D'}(x,t)}{H_u(x,t)}\Big|&\stackrel{\eqref{e:variationD prime errore}}{\leq}
C t^{\alpha-1} I_u(x,t)\,,
\end{align*}
in turn implying that 
\begin{align*}
\frac{t\epsilon_D^2(x,t)}{H_u^2(x,t)}+ I_u(x,t)\frac{|\epsilon_D(x,t)|}{H_u(x,t)}
 \stackrel{\eqref{e:variationD errore}}{\leq}
Ct^{\alpha-1} I_u(x,t),
\end{align*}
with $C=C(\mathfrak{m}(x_0),[\mA]_{0,\alpha})$, where we use the {local} uniform upper bound 
on $I_u(x, t)$ given by Lemma \ref{l:lim uniforme}. Therefore, estimate \eqref{e:stimaRut} follows straightforwardly.

In particular, if $\mathfrak{a}(x_0)=0$, then \eqref{e:stimaRut}
holds true for all $t\in[r,\varrho]$, and thus \eqref{e:almost monotonicity} follows by direct
integration of \eqref{e:Iuprime} by taking into account estimate \eqref{e:stimaRut} and 
Cauchy-Schwarz inequality that implies $E_u(x,t)H_u(x,t)-G_u^2(x,t)\geq 0$.
\end{proof}

\begin{remark}\label{r:monotonia freq}
It is also convenient to highlight a different expression of the derivative of 
the frequency function for later purposes: from \eqref{e:freq derivative} we get
\begin{equation}\label{e:Iuprime bis}
I_u'(x,t)=\frac{2t}{H_u^2(x,t)}\big(E_u(x,t)H_u(x,t)-D_u^2(x,t)\big)+\widetilde{R}_u(x,t)\,,
\end{equation}
where
\begin{equation}\label{e.Rtilde}
 \widetilde{R}_u(x,t):=\frac{\epsilon_{D'}(x,t)}{H_u(x,t)}-
 2I_u(x,t)\frac{\epsilon_D(x,t)}{H_u(x,t)}\,.
\end{equation}
In particular, if (H4) is satisfied in $x_0$, {$x\in B'_{\sfrac r2}(x_0)$} and $t\geq\max\{(\mathfrak{a}(x_0))^{\sfrac1\alpha},r\}$, then
\begin{equation}\label{e:stimaRut bis}
 |\widetilde{R}_u(x,t)|\leq C\,t^{\alpha-1}I_u(x,t)\,.
\end{equation}
\end{remark}

An additive quasi-monotonicity formula is then easily deduced. 

\begin{corollary}\label{c:quasi additive monotonicity}
For every $\mathfrak{m}_0>0$ there exist constants $\varrho, C>0$ depending on
$\mathfrak{m}_0$ and $[\mA]_{0,\alpha}$ with this property. If $u$ is a solution to \eqref{e:prob} under assumptions (H1)-(H3), for every $x_0 \in \Gamma(u)\cap B_{\sfrac12}'$ satisfying (H4) with $\mathfrak{m}(x_0)\leq \mathfrak{m}_0$,
if $\max\{(\mathfrak{a}(x_0))^{\sfrac1\alpha},r\}<\varrho$ and
$x\in B'_{\sfrac r2}(x_0)$, then
\begin{equation}\label{e:almost monotonicity additive}
[\max\{(\mathfrak{a}(x_0))^{\sfrac1\alpha},r\},\varrho]\ni
 t\mapsto I_u(x,t)+C t^{\alpha}
\quad\text{is non-decreasing}.
\end{equation}
In particular, if $\mathfrak{a}(x_0)=0$, then
\begin{equation}\label{e:freq lb bis}
I_u(x_0,0^+):=\lim_{r\downarrow 0}I_u(x_0,r)\geq\sfrac32\,.
\end{equation}
\end{corollary}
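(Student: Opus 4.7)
The plan is to derive the additive quasi-monotonicity \eqref{e:almost monotonicity additive} directly by integrating the differential identity \eqref{e:Iuprime}, and then to deduce \eqref{e:freq lb bis} via a blow-up argument that reduces the statement to the classical $\sfrac32$-lower bound for the flat Signorini problem.

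For \eqref{e:almost monotonicity additive}, the key observation is that Cauchy--Schwartz forces $E_u(x,t) H_u(x,t) - G_u^2(x,t) \geq 0$, so the only possibly negative contribution to $I_u'(x,t)$ in \eqref{e:Iuprime} is the error term $R_u$. Bounding $|R_u(x,t)| \leq C t^{\alpha-1} I_u(x,t)$ as in \eqref{e:stimaRut}, and invoking Lemma~\ref{l:lim uniforme} together with (H4) to get a uniform bound $I_u(x,t) \leq \mathfrak{m}_0 + C$ on the relevant range of scales, one arrives at $I_u'(x,t) \geq -C' t^{\alpha-1}$ with $C' = C'(\mathfrak{m}_0, [\mA]_{0,\alpha})$. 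A direct integration over any subinterval of $[\max\{(\mathfrak{a}(x_0))^{\sfrac1\alpha}, r\}, \varrho]$ then yields the non-decreasing monotonicity of $I_u(x,t) + C t^{\alpha}$ after redefining $C := C'/\alpha$.

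For \eqref{e:freq lb bis}, the assumption $\mathfrak{a}(x_0)=0$ extends the quasi-monotonicity just established down to $t \to 0^+$, hence the limit $\ell := I_u(x_0, 0^+)$ exists and is finite, and strictly positive by Lemma~\ref{l:freq lb}. I would then select any sequence $r_j \downarrow 0$ and apply Proposition~\ref{c:compactness} to the rescalings $v_j := u_{x_0, r_j}$, extracting a $C^{1,\gamma}_{\loc}$-limit $v_\infty$. Since $\mA(x_0) = \Id$, the limit $v_\infty$ solves the flat Signorini problem for the Laplacian and, by \eqref{e:cpt3} together with $H_{v_\infty}(1) = 1$, has the origin as a free boundary point. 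A straightforward scaling computation gives $I_{v_j}(\rho) = I_u(x_0, \rho r_j)$ for every $\rho \in (0, 1]$, while the doubling estimate of Proposition~\ref{p:doubling} combined with the normalization $H_{v_j}(1) = 1$ provides a uniform lower bound for $H_{v_j}(\rho)$ on compact subsets of $(0, 1]$. This is enough to pass to the limit inside $I_{v_j}$ and to conclude $I_{v_\infty}(\rho) \equiv \ell$; Almgren's classical monotonicity formula for the Laplacian then forces $v_\infty$ to be homogeneous of degree $\ell$, and the rigidity result for homogeneous global solutions of the flat Signorini problem (cf.\ \cite{ACS08} and \cite[Corollary~2.8]{FS18-1}) yields $\ell \geq \sfrac32$.

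The main technical delicacy I anticipate lies in the second step, specifically in the passage to the limit for $I_{v_j}(\rho)$: one must guarantee that $H_{v_\infty}(\rho) > 0$ in order to read off the frequency of the blow-up, and this is precisely where the doubling estimate plays a crucial role. The first part of the statement, by contrast, is an essentially routine integration once the uniform upper bound on the frequency at nearby points is provided by Lemma~\ref{l:lim uniforme}.
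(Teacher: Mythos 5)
Your argument is correct and follows the same strategy as the paper: the additive quasi-monotonicity comes from integrating \eqref{e:Iuprime} using \eqref{e:stimaRut} and the uniform frequency bound from Lemma~\ref{l:lim uniforme}, and the $\sfrac32$ lower bound combines compactness (Proposition~\ref{c:compactness}), the quasi-monotonicity of Proposition~\ref{p:monotonia freq}, and the classical lower bound for the flat Signorini problem from \cite[Lemma~1]{ACS08}. The only difference is a small detour: you pass through constancy of the blow-up frequency to infer homogeneity and then invoke classification of homogeneous solutions, whereas the paper applies \cite[Lemma~1]{ACS08} directly to the (not necessarily homogeneous) blow-up; both routes are valid, and your doubling-based justification of $H_{v_\infty}(\rho)>0$ addresses the one point that genuinely needs care in passing to the limit inside the frequency.
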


\begin{proof}
The proof of \eqref{e:almost monotonicity additive} is straighforward from inequality \eqref{e:Iuprime}, estimate \eqref{e:stimaRut} in Proposition~\ref{p:monotonia freq} and \eqref{e:I limitato} in Lemma \ref{l:lim uniforme}.
Furthermore, if $\mathfrak{a}(x_0)=0$, then from \cite[Lemma 1]{ACS08}, Corollary~\ref{c:compactness} and
 Proposition~\ref{p:monotonia freq} one deduces that
 $I_u(x_0,r)\geq\frac32e^{-C r^\alpha}$ for all $r\in(0,\varrho]$.
 Therefore, $I_u(x_0,0^+)
 \geq\sfrac32$.
 \end{proof}
{\begin{remark}
The $H^2_{\loc}(B_1)$ regularity of a solution $u$ has been exploited to infer the quasi-monotonicity property of the frequency function $I_u(x_0,\cdot)$ at points $x_0$ as in the statement of Corollary \ref{c:quasi additive monotonicity} in order to estimate the error term $\epsilon_{D'}$ (cf. Lemma \ref{l:stima J1}). 
Different approaches, such as that in \cite{JP,JPSm}, lead to quasi-monotonicity formulas holding in the less restrictive H\"older regularity scale for the matrix field. Despite this, in
order to establish rectifiability of free boundary points with finite order of contact we shall crucially use the $W^{1,p}$ regularity of the matrix field as well as the already mentioned $H^2_{\loc}(B_1)$ regularity of solutions (cf. Proposition \ref{p:D_x frequency}).
\end{remark}}

%
%

\section{Main estimates on the frequency}\label{s:frequency estimate}

\subsection{Oscillation estimate of the frequency}
We introduce the following notation for the radial variations of the frequency
at a point $x_0 \in \Gamma(u)$ with $\mathfrak{m}(x_0)<\infty$ and $\mathfrak{a}(x_0)=0$:
\begin{equation}\label{e:spatial osc freq}
\Delta^r_{\rho}(x_0) := I_u(x_0,r)+Cr^\alpha - \big(I_u(x_0,\rho)+C\rho^\alpha),\qquad 0<\rho<r,
\end{equation}
for $C>0$ the constant in Corollary~\ref{c:quasi additive monotonicity}, so that
$\Delta^r_{\rho}(x_0)\geq 0$ for $r,\rho$ sufficiently small.
The result in the ensuing Proposition~\ref{p:D_x frequency} shows how the spatial
oscillation of the frequency in two nearby points at a given scale is in turn
controlled by the radial variations at comparable scales.
We establish first a technical result. To this aim it is convenient to define the parameter
\begin{equation}\label{e:theta}
\theta:=\min\{[\mA]_{0,\alpha}^{-\sfrac1\alpha},1\}.
\end{equation}

\begin{lemma}\label{l:monotonia}
{For every $\mathfrak{m}_0>0$ there exist constants $\varrho, C>0$ depending on
$\mathfrak{m}_0$ and $[\mA]_{0,\alpha}$ with this property. If $u$ is a solution to \eqref{e:prob} under assumptions (H1)-(H3), for every $x_0 \in \Gamma(u)\cap B_{\sfrac12}'$ satisfying (H4) with $\mathfrak{m}(x_0)\leq \mathfrak{m}_0$,} 
and $\mathfrak{a}(x_0)=0$, then for all $r_1\leq \varrho$, $r_0\in (\frac{\theta}{16}r_1, r_1)$, 
and $x\in {B_{\frac{\theta}{32} r_1}'(x_0)}$, we have
\begin{align}\label{e:monotonia con resto}
\int_{B_{r_1}(x)\setminus B_{r_0}(x)} 
\big(\nabla u(z) \cdot (z - x) - & I_u(x,r_0)\,u(z)\big)^2 {\textstyle{\frac1{|z-x|}}}\,\d z
\leq C H_u(x,2\,r_1)\,\Delta^{2r_1}_{r_0}(x). 
\end{align}
\end{lemma}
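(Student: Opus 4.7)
The plan is to bound the left-hand side from above by the scale integral $\int_{r_0}^{2r_1} Q_t\,\frac{\d t}{t}$ of the $\phi$-weighted Cauchy--Schwarz defect
\[
Q_t := -\int \phi'\bigl({\textstyle\frac{|z-x|}{t}}\bigr)\bigl(\nabla u(z)\cdot (z-x) - Nu(z)\bigr)^2 \frac{\d z}{|z-x|}\,, \quad N := I_u(x,r_0)\,,
\]
and then exploit the frequency derivative identity from Proposition~\ref{p:monotonia freq} to estimate each contribution by $CH_u(x,2r_1)\Delta^{2r_1}_{r_0}(x)$.

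First I would carry out a Fubini reduction. Since $-\phi'(\cdot/t)$ is supported on $B_t(x)\setminus B_{t/2}(x)$, the change of variables $s=|z-x|/t$ shows that $\int_0^\infty (-\phi'(|z-x|/t))\,\d t/t$ equals a positive dimensional constant $c>0$ for every $z$, and restricting to $t\in[r_0,2r_1]$ still yields the full constant whenever $|z-x|\in(r_0,r_1)$. Thus
\[
\int_{B_{r_1}(x)\setminus B_{r_0}(x)} (\nabla u\cdot(z-x) - Nu)^2 \frac{\d z}{|z-x|} \le \frac{1}{c}\int_{r_0}^{2r_1} Q_t\,\frac{\d t}{t}\,.
\]
Next, the algebraic expansion $Q_t = t^2 E_u - 2N tG_u + N^2 H_u = \frac{t^2(E_u H_u - G_u^2)}{H_u} + H_u(\frac{tG_u}{H_u} - N)^2$, combined with \eqref{e:Iuprime} (which gives $E_uH_u - G_u^2 = \frac{H_u^2}{2t}(I_u'(x,t) - R_u(x,t))$) and the identity $tG_u/H_u = I_u(x,t) + t\epsilon_D/H_u$ coming from \eqref{e:variationG}, yields
\[
Q_t = \tfrac{1}{2}\,tH_u(x,t)\bigl(I_u'(x,t) - R_u(x,t)\bigr) + H_u(x,t)\Bigl(I_u(x,t) - N + \tfrac{t\epsilon_D(x,t)}{H_u(x,t)}\Bigr)^{\!2}\,.
\]

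The hypotheses $x\in B'_{\theta r_1/32}(x_0)$ and $r_0>\tfrac{\theta}{16}r_1$, combined with the choice of $\theta$, guarantee $r_0\ge(\mathfrak{a}(x))^{1/\alpha}$, so that on $[r_0,2r_1]$ the function $\tilde I(t):=I_u(x,t)+Ct^\alpha$ is nondecreasing (Corollary~\ref{c:quasi additive monotonicity}), the doubling estimate $H_u(x,t)\le CH_u(x,2r_1)$ holds (Proposition~\ref{p:doubling} and Lemma~\ref{l:lim uniforme}), and one has the error bounds $|R_u(x,t)|\le Ct^{\alpha-1}$ together with $|t\epsilon_D(x,t)/H_u(x,t)|\le Ct^\alpha$ from Lemma~\ref{l:stima J1} and Proposition~\ref{p:monotonia freq}. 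The first summand of $Q_t$, integrated against $\d t$, is bounded by $CH_u(x,2r_1)\Delta^{2r_1}_{r_0}(x)$ by writing $I_u'-R_u = \tilde I' - (C\alpha t^{\alpha-1} + R_u)$ with $\tilde I'\ge 0$ and $\int\tilde I'\d t = \Delta^{2r_1}_{r_0}(x)$; the second summand, integrated against $\d t/t$, is bounded using the monotonicity bound $|I_u(x,t)-N|\le\Delta^{2r_1}_{r_0}(x) + Cr_1^\alpha$ and the bounded logarithmic factor $\log(2r_1/r_0)\le C\log(32/\theta)$.

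The main obstacle is absorbing the $r_1^\alpha$-remainders generated by the coefficient perturbation into the final bound $CH_u(x,2r_1)\Delta^{2r_1}_{r_0}(x)$, since a priori $\Delta^{2r_1}_{r_0}(x)$ could be much smaller than $r_1^\alpha$. The resolution relies on the tuning of the constant $C$ in the definition of $\Delta$ via Corollary~\ref{c:quasi additive monotonicity}: it is chosen equal to the constant controlling the $R_u$- and $\epsilon_D$-errors, so the $r_1^\alpha$-contributions produced in the estimates feed back exactly into the increment $\Delta^{2r_1}_{r_0}(x)$, together with the a priori bound $\Delta^{2r_1}_{r_0}(x)\le C\mathfrak{m}_0$ that allows $\Delta^2\le C\Delta$. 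Throughout, the nonnegativity $E_uH_u - G_u^2\ge 0$ from Cauchy--Schwarz, the $H^2_{\loc}$ regularity of $u$, and the $W^{1,p}$ regularity of $\mA$ (via Lemma~\ref{l:stima J1}) are the essential analytic inputs.
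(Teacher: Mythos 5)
Your proposal is correct and follows essentially the same strategy as the paper's proof: a Fubini reduction to a scale integral, an algebraic decomposition of the resulting quadratic in the "target frequency" $N$, and the quasi-monotonicity \eqref{e:Iuprime}--\eqref{e:stimaRut} plus doubling to convert the deficit into $\Delta^{2r_1}_{r_0}(x)$. The one cosmetic difference is that you keep $N=I_u(x,r_0)$ throughout and complete the square exactly (so $Q_t$ splits into the nonnegative Cauchy--Schwarz defect $\frac{t}{2}H_u(I_u'-R_u)$ plus a squared drift), whereas the paper first applies a triangle inequality to replace $N$ by $I_u(x,t)$ and then uses the identity \eqref{e:I mon} with the remainder $\overline R_u$; the subsequent estimates and the constant-tuning used to absorb the $t^\alpha$-remainders into $\Delta$ are the same, and the condition guaranteeing monotonicity on $[r_0,2r_1]$ is governed by $\mathfrak a(x_0)=0$ (not $\mathfrak a(x)$), but with $\mathfrak a(x_0)=0$ this is trivially satisfied so the slip is immaterial.
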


\begin{proof}
By translation, it suffices to prove the lemma for $x_0 =\underline{0}$. We start off with the following computation that uses Remark \ref{r:monotonia freq}:
\begin{align}\label{e:I mon}
-\int\phi'&\big(\textstyle{\frac{|z-x|}{t}}\big)
\big(\nabla u(z) \cdot (z-x) - I_u(x,t)\,u(z)\big)^2 
\frac{1}{|z-x|} \d z
\notag\allowdisplaybreaks\\
&= t^2 E_u(x,t) - 2\, t\,I_u(x,t)\,G_u(x,t) + I_u^2(x,t) H_u(x,t)\notag
\allowdisplaybreaks\\
&= \frac{t^2}{H_u(x,t)}\,\big(E_u(x,t) H_u(x,t) - D_u^2(x,t)\big)-2t\epsilon_D(x, t)\, I_u(x,t)\notag\\
&\stackrel{\eqref{e:Iuprime bis}}{=}\frac{t}{2}\,H_u(x,t)\big(I_u'(x,t)-\overline{R}_u(x,t)\big),
\end{align}
where 
\[
\overline{R}_u = \widetilde{R}_u +\frac{4}{H_u(x,t)}\, \epsilon_D(x,t) \, I_u(x,t)
= \frac{\epsilon_{D'}(x,t)}{H_u(x,t)}+ 2I_u(x,t)\frac{\epsilon_D(x,t)}{H_u(x,t)}\,.
\]
with $\widetilde{R}_u$ function in \eqref{e.Rtilde}. 
In particular, the above equalities show that the last factor in \eqref{e:I mon} is nonnegative, {being nonnegative the term on the first line of \eqref{e:I mon} itself}. Therefore,
we may use the elementary integral estimate 
\begin{align}\label{e:fubini}
\int_{B_{r_1}(x)\setminus B_{r_0}(x)} f(z) \d z& \leq -
\frac{C}{r_0}\int_{r_0}^{2r_1} \int \phi'\big(\textstyle{\frac{|z-x|}{t}}\big)
f(z)\, \d z\,\d t
\quad\text{ for all $0<r_0\leq r_1$},
\end{align}
that holds true for any measurable function $f\geq0$, in order to deduce
\begin{align}\label{e:prima stima0}
&\int_{B_{r_1}(x)\setminus B_{r_0}(x)} 
\big(\nabla u(z) \cdot (z-x) - I_u(x,r_0)\,u(z)\big)^2 {\textstyle{\frac{1}{|z-x|}}}\,\d z\notag
\allowdisplaybreaks\\ & 
\stackrel{\eqref{e:fubini}}{\leq}-\frac{C}{r_0}\int_{r_0}^{2r_1} \int
\phi'\big(\textstyle{\frac{|z-x|}{t}}\big)
\big(\nabla u(z) \cdot (z-x) - I_u(x,r_0)\,u(z)\big)^2 {\textstyle{\frac{1}{|z-x|}}}\,\d z\,\d t\notag
\allowdisplaybreaks\\&
\leq -\frac{C}{r_0}\int_{r_0}^{2r_1} \int
\phi'\big(\textstyle{\frac{|z-x|}{t}}\big)
\big(\nabla u(z) \cdot (z-x) - I_u(x,t)\,u(z)\big)^2\,{\textstyle{\frac{1}{|z-x|}}}\d z\d t\notag\\
&\quad -\frac{C}{r_0}\int_{r_0}^{2r_1} \int
\phi'\big(\textstyle{\frac{|z-x|}{t}}\big)
\big(I_u(x,t) - I_u(x,r_0)\big)^2u^2(z)
{\textstyle{\frac{1}{|z-x|}}}\d z\d t\notag
\allowdisplaybreaks\\&
\stackrel{\eqref{e:I mon}}{\leq}\,\frac{C}{r_0}
\int_{r_0}^{2r_1} \frac{t}{2}\, H_u(x,t)\big(I_u'(x,t)-\overline{R}_u(x,t)\big)\, \d t\notag\\
&\qquad+ \frac{C}{r_0}\,
\big((I_u(x,2r_1) - I_u(x,r_0))^2+(2r_1)^{2\alpha}\big)\,\int_{r_0}^{2r_1} H_u(x,t)\,\d t,
\end{align}
where in the last estimate we have used Corollary \ref{c:quasi additive monotonicity} because $x\in {B_{\frac{\theta}{32}r_1}'\subset B_{\sfrac{r_0}{2}}'}$.

From Lemma \ref{l:lim uniforme} and Lemma~\ref{l:monotonia H} we get that $H_u(x,t) \leq C H_u(x,2r_1)$ for all
$t\in[r_0,2r_1]$.
Furthermore, we can use \eqref{e:stimaRut bis} in Remark~\ref{r:monotonia freq} to estimate $|\overline{R}_u(x,t)|$ for all $x\in B_{\frac{\theta}{32}r_1}(x_0)$ and $t\in[r_0,2r_1]$.
Thus, by taking into account that $I_u'(x,t)-\overline{R}_u(x,t)\geq 0$ (cf. \eqref{e:I mon}), 
from \eqref{e:prima stima0} we get 
\begin{align*}
\int_{B_{r_1}(x)\setminus B_{r_0}(x)} & \big(\nabla u(z) \cdot (z-x) - I_u(x,r_0)\,u(z)\big)^2 
{\textstyle{\frac{1}{|z-x|}}}\,\d z\notag\\ 
& \leq C
H_u(x,2r_1) \int_{r_0}^{2r_1}\big(I_u'(x,t)-\overline{R}_u(x,t)\big)\, \d t\notag\\
&+C
\,H_u(x,2r_1)\Big(\big(I_u(x,2r_1) - I_u(x,r_0)\big)^2+(2r_1)^\alpha-r_0^\alpha\Big)\notag\\
& \leq C\,H_u(x,2r_1)\,\big(I_u(x,2r_1) - I_u(x,r_0)\big)
+CH_u(x,2r_1)\big((2r_1)^\alpha-r_0^\alpha\big)\\
&\leq C\,H_u(x,2r_1)\,\Delta_{r_0}^{2r_1}(x),
\end{align*}
where we used that $r_0\in (\frac{\theta}{16}r_1,r_1)$ and $I_u(x,t) \leq C$ for all $t\in (r_0, 2r_1)$ and $x\in B'_{\frac{\theta}{32}r_1}$ by Lemma~\ref{l:lim uniforme}.
\end{proof}

We are now ready to prove a spatial oscillation estimate on the frequency function in terms of the radial oscillation computed between suitable radii in all points belonging to a neighbourhood of a point $x_0$ 
with $\mathfrak{a}(x_0)=0$.

\begin{proposition}\label{p:D_x frequency}
{For every $\mathfrak{m}_0>0$ there exist constants $\varrho, C>0$ depending on
$\mathfrak{m}_0$ and $[\mA]_{0,\alpha}$ with this property. If $u$ is a solution to \eqref{e:prob} under assumptions (H1)-(H3), then for every $x_0 \in \Gamma(u)\cap B_{\sfrac12}'$ satisfying (H4) with $\mathfrak{m}(x_0)\leq \mathfrak{m}_0$,} 
and $\mathfrak{a}(x_0)=0$, for all $R, \rho>0$ with $R>\frac{32}{\theta}$ and $R\rho <\varrho$, we have 
\begin{equation}\label{e:D_x frequency}
\big\vert I_u(x_1, R\rho) - I_u(x_2, R\rho)\big\vert
\leq C \,
\Big(\big(\Delta^{4R\rho}_{\sfrac{R \rho}{4}}(x_1)\big)^{\sfrac{1}{2}}
+\big(\Delta^{R\rho}_{\sfrac{R\rho}{4}}(x_2)\big)^{\sfrac{1}{2}}+(R\rho)^\alpha\Big),
\end{equation}
for every $x_1, x_2 \in B'_{\rho}(x_0)$ {(where $\theta$ is defined in \eqref{e:theta})}.
\end{proposition}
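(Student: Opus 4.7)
\textbf{Plan for Proposition~\ref{p:D_x frequency}.} Without loss of generality take $x_0 = \underline{0}$; set $r := R\rho$ and $s := r/4$. The plan is to split
\[
I_u(x_1,r) - I_u(x_2,r) = \bigl[I_u(x_1,r) - I_u(x_1,s)\bigr] + \bigl[I_u(x_1,s) - I_u(x_2,s)\bigr] + \bigl[I_u(x_2,s) - I_u(x_2,r)\bigr],
\]
reducing the matter to two sub-problems: (i) radial oscillations at a single centre, and (ii) the genuinely spatial oscillation at the small scale $s$. For (i), almost-monotonicity of the frequency (Corollary~\ref{c:quasi additive monotonicity}) yields $|I_u(x_i, r) - I_u(x_i, s)| \le \Delta^r_s(x_i) + C r^\alpha$; since $I_u$ is bounded by $\mathfrak{m}_0$ the linear control by $\Delta$ can be promoted to $\sqrt{\Delta}$, and the monotonicity $\Delta^r_s(x_1) \le \Delta^{4r}_s(x_1)$ matches the exponent for $x_1$ in the statement.

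The crux is (ii). I would work on the annulus $A := B_{r/2}(x_2) \setminus B_{r/4}(x_2)$ which, thanks to $|x_1 - x_2| \le 2\rho < r/16$ (from $R > 32/\theta$), also satisfies $A \subset B_{2r}(x_1) \setminus B_s(x_1)$. The starting point is the pointwise identity
\[
\bigl(I_u(x_2,s) - I_u(x_1,s)\bigr) u(z) = \Psi_2(z) - \Psi_1(z) - \nabla u(z) \cdot (x_2 - x_1), \qquad \Psi_i(z) := \nabla u(z) \cdot (z - x_i) - I_u(x_i, s)\, u(z).
\]
Squaring and integrating on $A$ with weight $1/|z-x_2|$, together with the lower bound $\int_A u^2/|z-x_2|\, dz \gtrsim H_u(x_2, r)$ (which follows from the support of $\phi'$ and the doubling in Corollary~\ref{c:freq ben def}), reduces the task to estimating three contributions: the $\Psi_2$-term via Lemma~\ref{l:monotonia} at $x_2$ with $r_1 = r/2$, $r_0 = s$, producing the factor $\Delta^{r}_{s}(x_2)\, H_u(x_2, r)$; the $\Psi_1$-term by comparing the weights $1/|z-x_2| \asymp 1/|z-x_1|$ on $A$ and invoking Lemma~\ref{l:monotonia} at $x_1$ with $r_1 = 2r$, $r_0 = s$, yielding $\Delta^{4r}_{s}(x_1)\, H_u(x_1, 4r) \asymp \Delta^{4r}_{s}(x_1)\, H_u(x_2, r)$ via Lemma~\ref{l:lim uniforme} and $H_u$-doubling; and the shift term $|x_1 - x_2|^2 \int_A |\nabla u|^2/|z-x_2|\, dz$, which is where the main delicacy lies.

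The \emph{main technical obstacle} is precisely this shift contribution: a direct Cauchy--Schwarz bound using $\int_A |\nabla u|^2 \lesssim D_u(x_2, r) \lesssim \mathfrak{m}_0\, H_u(x_2, r)/r$ and $|x_1-x_2|^2 \le 4\rho^2$ gives, after dividing by $H_u$, only the factor $1/R^2$, which is weaker than the $(R\rho)^{2\alpha}$ absorbed by the statement's error $(R\rho)^\alpha$. Pushing this down to $r^{2\alpha}\, H_u$ requires exploiting the scale separation $\rho \ll r$ more subtly and the $H^2_{\loc}$-regularity of $u$, in the same spirit as the error estimates in Lemma~\ref{l:stima J1}. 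Dividing the resulting inequality by $H_u(x_2, r)$ and taking square roots closes (ii) with the required bound, which combined with (i) yields~\eqref{e:D_x frequency}.
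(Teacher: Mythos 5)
Your plan diverges from the paper's proof at exactly the point you flag as "the main delicacy," and that gap is real and not fillable within your framework. The paper does not work with finite differences and squaring. Instead it differentiates the frequency in the tangential direction $e = x_2-x_1$ and integrates $\partial_e I_u(\cdot,t)$ along the segment $\{x_1+\sigma e : \sigma\in[0,1]\}$. The crucial point is a cancellation visible only at the level of the derivative: substituting $\partial_e u = \Delta I\, u + \Delta\mathcal{E}$ into the explicit formulas for $\partial_e H_u$ and $\partial_e D_u$, the $\Delta I$-proportional pieces combine in $H_u\,\partial_e D_u - D_u\,\partial_e H_u$ to $2\Delta I\,H_u\,\epsilon_D$, i.e.\ an error of size $O(t^\alpha)$ (plus the separate $\epsilon_{\partial_e D}$-error, also $O(t^\alpha)$). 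What remains is a \emph{cross} integral $\int -\phi'\,\Delta\mathcal{E}\cdot\mathcal{E}_x/|z-x|\,\d z$, which is handled by the $L^\infty$ bound on $\mathcal{E}_x$ from Schauder/$H^2$ estimates together with the $L^2$ control on $\mathcal{E}_1,\mathcal{E}_2$ from Lemma~\ref{l:monotonia}, producing the $\sqrt{\Delta_1}+\sqrt{\Delta_2}$ terms. There is no shift term to estimate: it has been absorbed into $\epsilon_D$ by the algebra.

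Your approach, by squaring the identity $\Delta I\,u = \Psi_1 - \Psi_2 - \nabla u\cdot e$ and applying Cauchy--Schwarz, destroys exactly this cancellation. The resulting shift contribution $|e|^2\int_A|\nabla u|^2/|z-x_2|\,\d z\sim (\rho/r)^2 D_u(x_2,r)\sim R^{-2}\,I_u\,H_u$ is, after dividing by $H_u$ and taking a square root, of order $1/R$: a fixed constant, not $O((R\rho)^\alpha)$. This is not a crude over-estimate that $H^2$-regularity or scale separation can improve; $\partial_e u$ really does carry one full power of $\nabla u$, and the only way to beat it is to combine it with the $\Delta I\cdot u$ term \emph{before} squaring. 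If you try to re-use the identity to rewrite $\nabla u\cdot e = \Psi_1-\Psi_2+\Delta I\,u$ inside the shift term you merely land back on $(\Delta I)^2\int_A u^2/|z-x_2|$ on the right, which is circular. So the plan, despite matching the scales $\Delta^{4R\rho}_{R\rho/4}(x_1)$ and $\Delta^{R\rho}_{R\rho/4}(x_2)$ reasonably well, cannot be completed as stated; the missing ingredient is the derivative-level algebraic cancellation that the paper's proof is built around.
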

\begin{proof}
 Without loss of generality, we show the conclusion for $x_0=\underline{0}$. Moreover, we set $t=R\rho$.
 Note that, under the assumption $R>\frac{32}{\theta}$, if $t <\varrho$ is sufficiently small, then $I_u(x,t)\leq C$ with the constant $C$ depending on $\mathfrak{m}_0$ by Lemma \ref{l:lim uniforme}.

The proof is based on estimating the tangential derivative
of the frequency function $x\mapsto I_u(x,t)$ 
for $x\in B'_{\rho}$ by taking advantage of the $H^2_\loc$ regularity of $u$.
Thus, we start off noticing that the functions $x\mapsto H_u(x,t)$
and $x\mapsto D_u(x,t)$ are differentiable and, for every 
$e \in \R^{n+1}$ with $e \cdot e_{n+1} = 0$,
we have that
\begin{align}\label{e:dH}
\de_e H_u(x,t) = - 2 \, \int \phi'\big(\textstyle{\frac{|z-x|}{t}}\big)\,
u(z)\, \de_e u(z)\,\frac{1}{|z-x|} \,\d z,
\end{align}
and setting $\mB(z):=\mA(z)-\mA(\underline{0})(=\mA(z)-\textup{Id})$
\begin{align}\label{e:dD}
 \de_e D_u(x,t) & = 2\int 
\phi\big(\textstyle{\frac{|z-x|}{t}}\big)
\nabla u(z)  \cdot \nabla (\de_e u)(z)\,\d z\notag\\
& = 2\int \phi\big(\textstyle{\frac{|z-x|}{t}}\big)
\big(\mA-\mB\big)(z)\nabla u(z)  \cdot \nabla (\de_e u)(z)\,\d z\notag\\
& = - 2\,t^{-1}\int \phi'\big(\textstyle{\frac{|z-x|}{t}}\big)\,
\de_e u(z)\, \mA(z)\nabla u(z) \cdot\frac{z-x}{|z-x|}\,\d z\notag\\
&-2\int \phi\big(\textstyle{\frac{|z-x|}{t}}\big)
\mB(z)\nabla u(z)  \cdot \nabla (\de_e u)(z)\,\d z\notag\\
& = - 2\,t^{-1}\int \phi'\big(\textstyle{\frac{|z-x|}{t}}\big)\,
\de_e u(z)\,\nabla u(z) \cdot\frac{z-x}{|z-x|}\,\d z
+\epsilon_{\partial_eD}(x,t)\,,
\end{align}
where
\begin{align}\label{e:eD}
\epsilon_{\partial_eD}(x,t)&:=- 2\,t^{-1}\int \phi'\big(\textstyle{\frac{|z-x|}{t}}\big)\,
\mB(z)\de_e u(z)\,\nabla u(z) \cdot\frac{z-x}{|z-x|}\,\d z\notag\\
&-2\int \phi\big(\textstyle{\frac{|z-x|}{t}}\big)
\mB(z)\nabla u(z)\cdot \nabla (\de_e u)(z)\,\d z. 
\end{align}
The third equality in \eqref{e:dD} follows from the divergence theorem 
applied to the vector field $V(z):= \phi\big(\textstyle{\frac{|z-x|}{t}}\big)\,\de_e u(z) \mA(z)\nabla u(z)$,
note that $V \in C^{\infty}(B_t(x)\setminus B_1',\R^{n+1})$,
$V$ has compact support and the divergence of $V$ does not concentrate on $B_1'$.
Recalling the $H^2_{\loc}$-estimates in Theorem \ref{t:reg} and the doubling estimates in Corollary \ref{c:freq ben def}, we have
\begin{align}\label{e:stimaJB}
 |\epsilon_{\partial_eD}(x,t)|&\leq C|e|t^{-1}(|x|+t)^\alpha D_u(x,2t)
 +C(|x|+t)^\alpha D_u^{\sfrac12}(x,t)\|u\|_{H^2(B_t(x))}\notag\\
&\leq C|e|t^{-1}(|x|+t)^\alpha\big(D_u(x,2t)+
D_u^{\sfrac12}(x,t)D_u^{\sfrac12}(x,2t)\big)\notag\\&
 \leq C|e|t^{-1}(|x|+t)^\alpha D_u(x,t)\,.
\end{align}
We choose $e := x_2 - x_1$ and set
\[
\mathcal{E}_i(z) := \nabla u(z) \cdot(z-x_i) -  I_u(x_i,t)\,u(z)
\quad\text{for $i=1,2$,}
\]
\[
\Delta I:= I_u(x_1,t) - I_u(x_2,t)
\and
\Delta\mathcal{E}(z):=\mathcal{E}_1(z) - \mathcal{E}_2(z).
\]
Then, we have that $\de_e u(z) = \Delta I \; u(z) + \Delta\mathcal{E}(z)$.
Thus, from \eqref{e:dH} we infer that 
\begin{align*}
\de_e H_u(x,t) &= 2\Delta I \cdot H_u(x,t)
-2 \int \phi'\big(\textstyle{\frac{|z-x|}{t}}\big)  \, 
\Delta\mathcal{E}(z) \;\frac{u(z)}{|z-x|}\,\d z\,,
\end{align*}
while from \eqref{e:dD} and \eqref{e:variationG} we conclude
\begin{align*}
\de_e D_u(x,t) &= 2\,\Delta I\cdot (D_u(x,t)+\epsilon_D(x,t))+\epsilon_{\partial_eD}(x,t)\\
&-2\,t^{-1} \int \phi'\big(\textstyle{\frac{|z-x|}{t}}\big)  \, 
\Delta\mathcal{E}(z) \;\nabla u (z) \cdot \frac{z-x}{|z-x|}\,\d z\,.
\end{align*}
In particular, by a direct computation we deduce that
\begin{align}\label{e:derivata2}
\de_e I(x,t) & =
\frac{t}{H_u^2(x,t)}\,\big(
H_u(x,t) \, \de_e D_u(x,t) - D_u(x,t)\, \de_e H_u(x,t) \big)\notag\\ 
& = \frac{2}{H_u(x,t)}\int -\phi'\big(\textstyle{\frac{|z-x|}{t}}\big)  \, 
\Delta\mathcal{E}(z)\;\Big(\nabla u (z) \cdot (z-x)-
I_u(x,t)\, u(z)\Big)\,\frac{1}{|z-x|}\,\d z\notag\\
&+\frac{t}{H_u(x,t)}\cdot (2 \Delta I\cdot \epsilon_D(x,t)+\epsilon_{\partial_eD}(x,t)).
\end{align}

We estimate \eqref{e:derivata2} (recall that $t = R\rho$ and $x \in B_\rho'$).
First notice that thanks to \eqref{e:variationD errore} we may conclude that 
\begin{align}\label{e:errore1}
 \frac{t}{H_u(x,t)}|2 \Delta I\cdot \epsilon_{D}(x,t)|\leq C|\Delta I|t^\alpha(I_u(x,t)+I_u^{\sfrac12}(x,t))
 \leq Ct^\alpha\,,
\end{align}
for some $C>0$.
Furthermore, by \eqref{e:stimaJB} we get that
\begin{align}\label{e:errore2}
\frac{t}{H_u(x,t)}|\epsilon_{\partial_eD}(x,t)|\leq Ct^\alpha I_u(x,t)\leq Ct^\alpha\,,
\end{align}
{where we used that $|e|\leq 2\rho\leq t$.}
Note that, since $x\in B_\rho'$,
by elliptic regularity of $u$ (cf. Theorem~\ref{t:reg}) we infer that 
\begin{align*}
\sup_{z\in B_t^+(x)}&|\nabla u (z) \cdot (z-x) - I_u(x,t)\, u(z)|\\
& \leq 
t\sup_{z\in B_{t+\rho}^+} |\nabla u (z)|+ I_u(x,t)\|u\|_{C^0(B_{t+\rho})}\\
&\leq C\,t^{-\frac{n+1}{2}}\,\|u\|_{L^2(B_{2t+2\rho})}
\leq C\,t^{-\frac{n}{2}}\,H_u^{\sfrac12}\big(2t +2\rho\big), 
\end{align*}
where we use \eqref{e:L2 vs H} in Lemma \ref{l:monotonia H}. Hence, we have that
\begin{align}\label{e:derivata2.5}
\de_e I_u(x,t) &\leq C\,t^{-\frac{n}{2}}\frac{H_u^{\sfrac12}\big(2t +2\rho\big)}{H_u(x,t)}
\int -\phi'\big(\textstyle{\frac{|z-x|}{t}}\big)
\big(|\mathcal{E}_1(z)| + |\mathcal{E}_2(z)|\big)
\frac{1}{|z-x|}\,\d z+C\,t^\alpha\,.
\end{align}
In order to estimate the integral term in \eqref{e:derivata2.5}, we notice that 
\[
B_t(x)\setminus B_{\sfrac{t}{2}}(x)
\subset B_{t+2\rho}(x_i) \setminus B_{\sfrac{t}{2}-2\rho}(x_i)
\quad \forall \,x\in B_{\sfrac12}', \;\text{for }\;i=1,2;
\]
therefore
\begin{align}\label{e:conto integrale}
\int_{B_{t}(x)\setminus B_{\frac{t}{2}}(x)}
|\mathcal{E}_i(z)|\,\textstyle{\frac{1}{|z-x|}}\,\d z
&\leq{\textstyle{\frac{2\,(t+2\rho)}{t}}}\int_{B_{t+2\rho}(x_i) \setminus B_{\frac{t}{2}-2\rho}(x_i)}
|\mathcal{E}_i(z)| \,\textstyle{\frac{1}{|z-x_i|}}\,\d 
z\notag\\
&\leq C\, t^{\frac{n}{2}}\Big(\int_{B_{t+2\rho}(x_i) \setminus B_{\frac{t}{2}-2\rho}(x_i)}
\mathcal{E}_i^2(z) \,\textstyle{\frac{1}{|z-x_i|}}\,\d z
\Big)^{\sfrac12},
\end{align}
where we choose $R>8$ and we use the direct computation
\[
\int_{B_{t+2\rho}(x_i) \setminus B_{\frac{t}{2}-2\rho}(x_i)}
\textstyle{\frac{1}{|z-x_i|}}\,\d z\leq C\,t^n,
\]
with  $C>0$ a dimensional constant.
If $R>\frac{32}{\theta}$, then we are in the position to apply Lemma~\ref{l:monotonia}
(with $r_0=\sfrac t2-2\rho$ and $r_1=t+2\rho$) to get
\begin{align}
\int_{B_{t+2\rho}(x_i) \setminus B_{\sfrac{t}{2}-2\rho}(x_i)}
\mathcal{E}_i^2(z) \,\textstyle{\frac{1}{|z-x_i|}}\,\d z 
\leq C_{\ref{l:monotonia}}(A)\,H_u\big(x_i,2t+4\rho\big)\,
\Delta^{2(t+2\rho)}_{\frac{t}{2}-2\rho}(x_i). 
\label{e:stima secondo fattore}
\end{align}
Using \eqref{e:derivata2.5}-\eqref{e:stima secondo fattore} we claim that for all $x\in B_\rho'$
\begin{align}\label{e:finale con brio}
\de_e I_u(x,t)
&\leq C\,\big(\Delta^{4t}_{\sfrac{t}{4}}(x_1)\big)^{\sfrac12}
+ C\,\big(\Delta^{4t}_{\sfrac{t}{4}}(x_2)\big)^{\sfrac12}+C\,t^\alpha\,,
\end{align}
from which the conclusion follows by integrating \eqref{e:finale con brio} along the segment $\{x_1+ r\,e:\, r\in[0,1]\}$. Indeed, $4t \geq 2(t+2\rho)$ and $\frac t4<\frac{t}{2}-2\rho$, and the monotonicity of Corollary \ref{c:quasi additive monotonicity} in the set of radii under consideration. Moreover,
\[
H_u^{\sfrac12}\big(2t+2\rho\big)\,H_u^{-1}\big(x,t\big)
H_u^{\sfrac12}\big(x_i,2t+4\rho\big)\leq
H_u^{\sfrac12}\big(2t+2\rho\big)\,H_u^{-1}\big(t\big)
H_u^{\sfrac12}\big(2t+4\rho\big)\leq C\,,
\]
thanks to the estimates in Lemma \ref{l:lim uniforme} and Corollary \ref{c:freq ben def}
because $x_i\in B'_{\rho}$ and $t = R\rho \in (2\rho, \varrho)$.
\end{proof}

%
%
\subsection{Estimate of the mean-flatness via the frequency function}
We introduce the mean-flatness.

\begin{definition}\label{d:mean-flat}
Given a Radon measure $\mu$ in $\R^{n+1}$, 
for every $x_0 \in \R^n$ and for every $r>0$, set
\begin{equation}\label{e:beta}
\beta_\mu(x,r) := \inf_{\cL} \Big(
r^{-n-1} \int_{B_r(x)} \dist^2(y,\cL)\d\mu(y)\Big)^{\sfrac{1}{2}},
\end{equation}
where the infimum is taken among all affine $(n-1)$-dimensional planes $\cL \subset \R^{n+1}$, and 
$\dist(y,\cL) := \inf_{x \in \cL} |y-x|$.
\end{definition}

As shown in \cite{FS18-1, FS22} the mean flatness $\beta_\mu$ of an arbitrary measure $\mu$ supported on $\Gamma(u)$ is controlled in terms of the integration of suitable radial oscillations of the frequency with respect to $\mu$.

\begin{proposition}\label{p:mean-flatness vs freq}
{For every $\mathfrak{m}_0>0$ and $R>\frac{64}{\theta}$ (where $\theta$ is defined in \eqref{e:theta}),
there exist constants $\varrho, C>0$ depending on $R$,
$\mathfrak{m}_0$ and $[\mA]_{0,\alpha}$ with this property. If $u$ is a solution to \eqref{e:prob} under assumptions (H1)-(H3), for every $x_0 \in \Gamma(u)\cap B_{\sfrac12}'$ satisfying (H4) with $\mathfrak{m}(x_0)\leq \mathfrak{m}_0$,} 
and $\mathfrak{a}(x_0)=0$, then for every $r>0$ with $Rr<\varrho$, 
for every finite Borel measure $\mu$ with $\spt\mu\subseteq \Gamma(u)$,
and for every $p \in \Gamma(u) \cap B'_{r}(x_0)$
\begin{equation}\label{e:mean-flatness vs freq}
\beta_{\mu}^2 (p,r) \leq 
\frac{C}{r^{n-1}}\Big(
\int_{B_{r}(p)}\Delta_{(R-5)\,\sfrac{r}{2}}^{(2R+4)\,r}(x)\,\d\mu(x)
+(Rr)^\alpha\mu(B_{r}(p))\Big)\,.
\end{equation}
\end{proposition}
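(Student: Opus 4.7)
My plan is to follow the strategy of Focardi--Spadaro \cite{FS18-1,FS22} for the constant-coefficient thin obstacle problem, tracking throughout the additional perturbative terms produced by the variable coefficient matrix $\mathbb{A}$.

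\textbf{Reduction to a second-moment eigenvalue bound.} Assuming $\mu(B_r(p))>0$ (else the statement is trivial), set $b_\mu := \mu(B_r(p))^{-1}\int_{B_r(p)}y\,\d\mu(y)$ and consider the symmetric positive semi-definite quadratic form on $\R^{n+1}$,
$$Q(v) := \int_{B_r(p)}\langle y - b_\mu, v\rangle^2\,\d\mu(y).$$
A standard linear-algebra computation shows that the optimal affine $(n-1)$-plane in the definition of $\beta_\mu$ passes through $b_\mu$ and spans the top $n-1$ eigenvectors of $Q$, giving
$$\beta_\mu^2(p,r)\,r^{n+1} = \lambda_1 + \lambda_2,$$
where $\lambda_1 \leq \lambda_2$ are the two smallest eigenvalues of $Q$. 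Using the symmetrization
$$Q(v) = \frac{1}{2\mu(B_r(p))}\iint \langle y_1 - y_2, v\rangle^2\,\d\mu(y_1)\,\d\mu(y_2),$$
the problem reduces to bounding $|\langle y_1 - y_2, v\rangle|^2$ for $y_1, y_2 \in \Gamma(u)\cap B_r(p)$ and $v$ ranging over two well-chosen orthonormal directions.

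\textbf{Directional control via Lemma~\ref{l:monotonia} and Proposition~\ref{p:D_x frequency}.} The central device is the error field $\mathcal{E}_x(z) := \nabla u(z)\cdot(z-x) - I_u(x,\cdot)\,u(z)$, whose weighted $L^2$-norm on the annulus $B_{(R+2)r}(x)\setminus B_{(R-5)r/2}(x)$ is bounded by $H_u(x,\cdot)\,\Delta_{(R-5)r/2}^{(2R+4)r}(x)$ thanks to Lemma~\ref{l:monotonia} applied with $r_1 := (R+2)r$ and $r_0 := (R-5)r/2$. The hypothesis $R > 64/\theta$ is precisely what guarantees the inclusion $B_r'(p) \subset B_{\theta r_1/32}'(x_0)$ needed for admissibility in that lemma. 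Following the scheme of \cite[Section~4]{FS18-1}, from the identity
$$\mathcal{E}_{y_1}(z) - \mathcal{E}_{y_2}(z) = \nabla u(z)\cdot(y_2 - y_1) - \bigl(I_u(y_1,\cdot) - I_u(y_2,\cdot)\bigr)u(z),$$
one tests against a vector-valued cut-off supported in the annulus and integrates by parts via the Euler--Lagrange equation $\dv(\mathbb{A}\nabla u)=0$. Proposition~\ref{p:D_x frequency} controls the spatial frequency oscillation $|I_u(y_1,Rr) - I_u(y_2,Rr)|$ (producing the perturbative $(Rr)^\alpha$ contribution), while the commutator term $\int \phi(\cdot)(\mathbb{A}-\mathrm{Id})\nabla u \cdot \nabla w\,\d z$ arising in the integration by parts is estimated via the H\"older continuity of $\mathbb{A}$ together with $\mathfrak{a}(x_0)=0$. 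One eventually reaches the pointwise bound
$$|\langle y_1 - y_2, v\rangle|^2 \leq C\,r^2\Bigl(\Delta_{(R-5)r/2}^{(2R+4)r}(y_1) + \Delta_{(R-5)r/2}^{(2R+4)r}(y_2) + (Rr)^\alpha\Bigr).$$

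\textbf{Assembly and main obstacle.} Integrating the last inequality against $\d\mu(y_1)\otimes\d\mu(y_2)$ and dividing by $2\mu(B_r(p))$ yields
$$Q(v) \leq C\,r^2\Bigl(\int_{B_r(p)}\Delta_{(R-5)r/2}^{(2R+4)r}(x)\,\d\mu(x) + (Rr)^\alpha\,\mu(B_r(p))\Bigr),$$
which combined with the eigenvalue reduction of Step~1 and division by $r^{n+1}$ delivers \eqref{e:mean-flatness vs freq}. The main obstacle is the integration-by-parts step: unlike the harmonic case, one must simultaneously exploit the H\"older continuity of $\mathbb{A}$, the $H^2_{\loc}$ regularity of $u$ from Theorem~\ref{t:reg}, the doubling estimates of Proposition~\ref{p:doubling}, and the frequency oscillation of Proposition~\ref{p:D_x frequency}, in such a way that all variable-coefficient remainders are ultimately absorbed into the single correction $(Rr)^\alpha\,\mu(B_r(p))$ appearing in the statement.
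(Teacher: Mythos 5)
Your overall framework is the right one — reduction to the sum $\lambda_1+\lambda_2$ of the two smallest eigenvalues of the second moment form $Q$, the use of the error field $\mathcal{E}_x(z)=\nabla u(z)\cdot(z-x)-I_u(x,\cdot)u(z)$, Lemma~\ref{l:monotonia}, Proposition~\ref{p:D_x frequency}, and the admissibility condition $R>64/\theta$ — and these are indeed the ingredients invoked by the cited proofs in \cite{FS18-1,FS22}. However, there is a genuine gap in the middle step.

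The claim you call a ``pointwise bound,''
\[
|\langle y_1 - y_2, v\rangle|^2 \leq C\,r^2\Bigl(\Delta_{(R-5)r/2}^{(2R+4)r}(y_1) + \Delta_{(R-5)r/2}^{(2R+4)r}(y_2) + (Rr)^\alpha\Bigr)\qquad \text{for }v=v_1,v_2,
\]
is not what the cited argument proves, is strictly stronger than the statement of Proposition~\ref{p:mean-flatness vs freq}, and does not follow from the integration-by-parts step you describe. What the identity $\mathcal{E}_{y_1}(z)-\mathcal{E}_{y_2}(z)=\nabla u(z)\cdot(y_2-y_1)-(I_u(y_1,\cdot)-I_u(y_2,\cdot))u(z)$ together with Lemma~\ref{l:monotonia} and Proposition~\ref{p:D_x frequency} yields is a bound on the weighted $L^2$-norm of the \emph{directional derivative} $\nabla u(z)\cdot(y_1-y_2)$ over an annulus, namely $\int |\nabla u(z)\cdot(y_1-y_2)|^2 w(z)\,\d z\leq C\,H_u\,(\Delta(y_1)+\Delta(y_2)+(Rr)^{2\alpha})$. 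To convert this into control of $|\langle y_1-y_2, v_k\rangle|$ you would need to know that $u$ is nondegenerate (in the sense that $\int |\partial_{v_k}u|^2\,w\gtrsim H_u/r$) along the \emph{small eigendirections} $v_1, v_2$ of $Q$. But $v_1, v_2$ are determined by the measure $\mu$, not by the solution $u$, and there is no a priori reason why the solution cannot be (almost) translation-invariant precisely in one of those directions; this is the whole obstruction that makes the mean-flatness estimate a genuinely averaged statement rather than a Hausdorff-distance one.

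The actual mechanism in \cite{FS18-1,FS22} does not pass through a pointwise bound on $\langle y_1-y_2,v_k\rangle^2$. Instead, one works directly with the eigenvector relation $Qv_k=\lambda_k v_k$, i.e.\ for each $z$ in the annulus
\[
\lambda_k\,\langle v_k,\nabla u(z)\rangle=-\int_{B_r(p)}\langle y-b_\mu,v_k\rangle\,\mathcal{E}_y(z)\,\frac{\d\mu(y)}{\mu(B_r(p))}+\text{(frequency-oscillation term)},
\]
where the passage to $\mathcal{E}_y$ uses $\int\langle y-b_\mu,v_k\rangle\,\d\mu(y)=0$. One then squares, applies Cauchy--Schwarz \emph{in the $\mu$-variable} (which extracts a factor $\lambda_k$ from the $\langle y-b_\mu,v_k\rangle^2$ moment), divides by $\lambda_k$, and finally integrates over $z$ against the annular weight. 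The loss of nondegeneracy in any individual direction $v_k$ is then recovered by summing over a spanning set of directions (including $e_{n+1}$, where the Signorini conditions ensure that $\partial_{n+1}u$ is large) and comparing the resulting expression to $D_u$ or $H_u$ via the doubling estimates and the lower bound on the frequency. This is a fundamentally different chain of inequalities from the one you propose, and it is the reason the conclusion is a bound on $\lambda_1+\lambda_2$ that involves the $\mu$-average of $\Delta$, not a pointwise geometric estimate. You should replace the pointwise step by the eigenvector-relation computation and the nondegeneracy argument.
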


\begin{proof}
The proof is a simple adaptation of the ones in \cite[Proposition 4.2]{FS18-1} and \cite[Proposition 5.1]{FS22}. The condition $R>\frac{64}{\theta}$ is used in order to apply Lemma~\ref{l:monotonia}. We leave the details to the readers.

\end{proof}

%
%

\section{Intrinsic frequency}\label{s.intrinsic frequency}

In this section we introduce an elementary change of variables in order to make a generic free boundary point $x_0$ satisfy $\frak{a}(x_0)=0$ for a different, related thin obstacle problem {(cf. \cite{GS14,GPS16,GPS18} for the thin obstacle, and \cite{FoGeSp15} in the case of the classical obstacle problem)}.
In such a way we define an intrinsic frequency function for which the conclusions of Proposition~\ref{p:mean-flatness vs freq} hold even without the {matrix $\mathbb{A}(x_0)$} being the identity at free boundary points.

Given a solution $u$ of \eqref{e:ob-pb local}, and $x_0\in\Gamma(u)\cap B_1'$, consider the function 
$u_{\mathbb{A}(x_0)}:\PPhi^{-1}(B_1)\to\mathbb{R}$ defined by
\[
u_{\mathbb{A}(x_0)}(x):=u(\PPhi(x))\,,
\]
where $\PPhi:\mathbb{R}^{n+1}\to\mathbb{R}^{n+1}$ is the 
affine map $\PPhi(x):=x_0+\mathbb{A}^{\sfrac12}(x_0)(x-x_0)$. 
In particular, changing variables by means of $\PPhi$ leads to
\begin{equation}\label{e:cambio di coordinate3}
\int_{B_1}\langle\mathbb A(x)\nabla u(x) , \nabla u(x) \rangle\,\d x=\det\big(\LL(x_0)\big)\,\mathscr{E}_{\mathbb{A}(x_0)}\big(u_{\mathbb{A}(x_0)}, 
\PPhi^{-1}(B_1)\big)\,,
\end{equation}
where 
\begin{equation}\label{e:enrgA}
\mathscr{E}_{\mathbb{A}(x_0)}(v,U):=\int_{U}
\langle \mathbb{C}_{x_0}(x)\nabla v(x),\nabla v(x)\rangle\d x\,,
\end{equation}
for every open set $U\subseteq
\PPhi^{-1}(B_1)$,
and $\mathbb{C}_{x_0}(x):=\mathbb{A}^{-\sfrac12}(x_0)\mathbb{A}(
\PPhi(x)) \mathbb{A}^{-\sfrac12}(x_0)$.
Note that $\mathbb{C}_{x_0}(x_0)=\text{Id}$.
Therefore, $u_{\mathbb{A}(x_0)}$ turns out to be the solution of 
the thin obstacle problem for the energy in \eqref{e:enrgA}
among all functions in $v\in g(\PPhi)
+H^1_0\big(
\PPhi^{-1}(B_1)\big)$ that are even across the corresponding hyperplane
$\PPhi^{-1}(\{x_{n+1}=0\})=\{x_{n+1}=0\}$ (thanks to hypothesis (H3)), and such that
$v|_{\PPhi^{-1}(B_1')}\geq 0$.
Moreover, there is a bijection of the free boundaries:
$\Gamma(u_{\mathbb{A}(x_0)})=
\PPhi^{-1}(\Gamma(u))$.

Let $u$ be a solution to \eqref{e:ob-pb local}, and let 
$x_0\in B_1'$ and $r>0$ be such that $\Phi_{x_0}(B_r(x_0)\cap \{x_{n+1}=0\})\subset B_1'$.
Being $u_{\mathbb{A}(x_0)}$ solution to the thin obstacle problem corresponding to the matrix field $\mathbb{C}_{x_0}(\cdot)$ in $B_r(x_0)$,
we consider the related frequency function
\begin{equation}\label{e:intrinsic I}
I_{u_{\mathbb{A}(x_0)}}(x,s)=\frac{s\, D_{u_{\mathbb{A}(x_0)}}(x,s)}{H_{u_{\mathbb{A}(x_0)}}(x,s)}\,,
\qquad x\in \{x_{n+1}=0\}\cap B_{r}(x_0), \quad s< r - |x-x_0|.
\end{equation}
In passing, note that if $x_0$ satisfies assumption $\frak{a}(x_0)=0$, then $u$ coincides with
$u_{\mathbb{A}(x_0)}$, and correspondingly $I_{u_{\mathbb{A}(x_0)}}$ coincides with $I_u$ at all points in $B_1'$ and admissible radii. 
For later purposes it is convenient to point out explicit formulas for the Dirichlet energy \begin{equation}\label{e:intrinsic D}
D_{u_{\mathbb{A}(x_0)}}(x,s) =
 \int \phi\Big(\textstyle{\frac{|\Phi^{-1}_{x_0}(y)-x|}{s}}\Big)
 \frac{\langle\mathbb{A}(x_0)\nabla u(y),\nabla u(y)\rangle}{\det\LL(x_0)}\,\d y,
\end{equation}
and for the ``boundary'' $L^2$ norm of $u$
\begin{equation}\label{e:intrinsic H}
H_{u_{\mathbb{A}(x_0)}}(x,s)=
 -\int \phi'\Big(\textstyle{\frac{|\Phi^{-1}_{x_0}(y)-x|}{s}}\Big)
 \,\frac{u^2(y)}{|\Phi^{-1}_{x_0}(y)-x|}\frac{1}{\det\LL(x_0)}\,\d y.
\end{equation}
We call $I_{u_{\mathbb{A}(x_0)}}(x_0,r)$ the intrinsic function at a free boundary point $x_0\in \Gamma(u)$ and set
\[
N_u(x_0,r) := I_{u_{\mathbb{A}(x_0)}}(x_0,r).
\]
{Having fixed a point $x_0$ of the free boundary with finite frequency,} we compare the intrinsic frequency function and the (standard) Dirichlet based one {at points of the free boundary  with finite frequency close to $x_0$. Thus, for every $\mathfrak{m}_0>0$ we set
\[
\Gamma^{\mathfrak m_0}(u):=\{x\in \Gamma(u)\cap B_{\sfrac12}':\,\sup_{r\in(0,\sfrac12)}N_u(x,r)\leq \mathfrak{m}_0\}\,.
\]

\begin{proposition}\label{p:comparison frequencies}
For every $\mathfrak{m}_0>0$ there exist constants $\varrho, C>0$ depending on the ellipticity constant $\lambda$, $\mathfrak{m}_0$ and $[\mA]_{0,\alpha}$ with this property. If $u$ is a solution to \eqref{e:prob} under assumptions (H1)-(H3), for every $x_0,\,x_1 \in \Gamma^{\mathfrak m_0}(u)\cap B_{\sfrac12}'$,
if $r\in(0,\varrho)$ and $|x_0-x_1|<C^{-1}r$, then
\begin{align}\label{e:comparison I}
\big|I_{u_{\mathbb{A}(x_0)}}(\PPhi^{-1}(x_1),r)-
N_u(x_1,r)\big|\leq
C|x_0-x_1|^{\sfrac\alpha2}
N_u(x_1,r)\,.
\end{align}
\end{proposition}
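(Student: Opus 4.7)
The plan is to attack \eqref{e:comparison I} by substituting the explicit integral formulas \eqref{e:intrinsic D}--\eqref{e:intrinsic H} for $D$ and $H$ under each intrinsic rescaling and comparing the resulting expressions term by term. The central analytic input is that, for symmetric uniformly elliptic matrix fields, the square-root map is Lipschitz, so hypothesis (H1) together with Morrey embedding transfers the $C^{0,\alpha}$ regularity to $\mathbb{A}^{\pm \sfrac12}$:
\[
|\mathbb{A}^{\pm\sfrac12}(x_0) - \mathbb{A}^{\pm\sfrac12}(x_1)| \leq C |x_0-x_1|^{\alpha}.
\]
Moreover (H3) ensures that both $\mathbb{A}(x_0)$ and $\mathbb{A}(x_1)$ leave the hyperplane $\{x_{n+1}=0\}$ invariant, so that the point $\Phi_{x_0}^{-1}(x_1)$ still lies in $\{x_{n+1}=0\}$ and the formulas \eqref{e:intrinsic D}--\eqref{e:intrinsic H} apply with base point $\Phi_{x_0}^{-1}(x_1)$ and with base point $x_1$, respectively.

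Next I would perform a term-by-term comparison between the two numerators (and separately the two denominators). Writing both $D_{u_{\mathbb{A}(x_i)}}(\cdot,r)$ as integrals in $y$ against the weight $\phi(|\mathbb{A}^{-\sfrac12}(x_i)(y-x_1)|/r)$ and the integrand $\det(\mathbb{A}^{\sfrac12}(x_i))^{-1}\langle \mathbb{A}(x_i)\nabla u,\nabla u\rangle$, the difference splits into three contributions: (i) change in the weight $\phi$, controlled using its $C^{1,1}$ regularity by $Cr^{-1}|x_0-x_1|^{\alpha}|y-x_1|$, which is $O(|x_0-x_1|^{\alpha})$ on the support $\{|y-x_1|\lesssim r\}$; (ii) change in the matrix $\mathbb{A}$ at the points $x_0$ versus $x_1$, which gives pointwise error $C|x_0-x_1|^{\alpha}|\nabla u|^{2}$; (iii) change in the determinantal prefactor, again $O(|x_0-x_1|^{\alpha})$, and which in any case cancels in the ratio once the analogous comparison is performed for $H_{u_{\mathbb{A}(x_i)}}(\cdot,r)$. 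An entirely analogous decomposition applies to the $H$ terms, with the additional care needed for the denominator $|\mathbb{A}^{-\sfrac12}(x_i)(y-x_1)|$ in the integrand: this is handled by restricting attention to the annular support of $\phi'(\cdot /r)$, where the denominator stays comparable to $r$ and its variation with $x_i$ is again of relative order $|x_0-x_1|^{\alpha}$.

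Once all these pointwise error bounds are in place, I would invoke the doubling estimates for $H_u$ and $D_u$ at the scale $r$ from Corollary~\ref{c:freq ben def} (applicable since $|x_0-x_1| \ll r$) to re-express every error term as a small multiple of $D_u(x_1, r)$ or $H_u(x_1, r)$. This shows that numerators and denominators of the two frequency quotients agree up to a relative error of order $|x_0-x_1|^{\alpha}$; dividing, and using the hypothesis $N_u(x_1,r) \leq \mathfrak{m}_0$, yields a bound of the form $C|x_0-x_1|^{\alpha} N_u(x_1,r)$, which in particular implies \eqref{e:comparison I}.

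The main obstacle I anticipate is bookkeeping across the two slightly different integration domains (ellipsoids around $x_1$ with axes determined by $\mathbb{A}^{\sfrac12}(x_0)$ versus $\mathbb{A}^{\sfrac12}(x_1)$): the symmetric difference of these two regions has volume of order $|x_0-x_1|^{\alpha}r^{n+1}$, and one must argue that the contribution of $|\nabla u|^{2}$ and $u^{2}/|y-x_1|$ on this shell is absorbed by $D_u(x_1,r)$ and $H_u(x_1,r)$ via the doubling property and the lower bound on $H_u$ from Lemma~\ref{l:monotonia H}. The stated exponent $\sfrac{\alpha}{2}$, weaker than the a~priori expected $\alpha$, presumably reflects a single Cauchy--Schwarz step used to symmetrize the relative errors between the two frequencies; for the applications to mean-flatness of Section~\ref{s:frequency estimate} this loss is harmless.
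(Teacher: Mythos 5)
Your proposal takes essentially the same route as the paper's proof: it writes the intrinsic $D$ and $H$ via the explicit formulas \eqref{e:intrinsic D}--\eqref{e:intrinsic H}, splits the differences into changes in the cutoff weight, in the matrix, and in the determinantal prefactor, and absorbs each error term into $D_{u_{\mA(x_1)}}(x_1,\cdot)$ or $H_{u_{\mA(x_1)}}(x_1,\cdot)$ at a slightly larger radius using the doubling estimates of Corollary~\ref{c:freq ben def} together with the $L^2$-to-$H$ bound \eqref{e:L2 vs H}. One small correction to your closing speculation: the exponent $\sfrac\alpha2$ in the statement is not a Cauchy--Schwarz loss; the paper bounds $|\mathbb{A}^{\pm\sfrac12}(x_0)-\mathbb{A}^{\pm\sfrac12}(x_1)|\le C|x_0-x_1|^{\sfrac\alpha2}$ via the unconditional operator inequality $\|A^{\sfrac12}-B^{\sfrac12}\|\le\|A-B\|^{\sfrac12}$ for nonnegative symmetric matrices, whereas the Lipschitz bound you invoke (valid under uniform ellipticity) would give the stronger exponent $\alpha$, so the stated exponent is merely conservative.
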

\begin{proof}
We start off noticing that by (H1)
\[
|\PPhi^{-1}(x_1)-x_0|=|\PPhi^{-1}(x_1)-\PPhi^{-1}(x_0)|\leq \lambda^{-\sfrac12}|x_1-x_0|\,,
\]
where the square root of the ellipticity constant $\lambda$ estimates from below the norm of $\mathbb{A}^{-\sfrac12}(x_0)$.
Therefore, $I_{u_{\mathbb{A}(x_0)}}(\PPhi^{-1}(x_1),r)$ is well defined provided that 
$\varrho, C^{-1}$ are small (cf. Lemma~\ref{l:lim uniforme}).

To prove the inequality in \eqref{e:comparison I} it is convenient to recall formulas \eqref{e:intrinsic I}-\eqref{e:intrinsic H}:
\[
D_{u_{\mathbb{A}(x_0)}}(\PPhi^{-1}(x_1),r)=
\int {\textstyle{\phi\Big(\frac{|\mathbb{A}^{-\sfrac12}(x_0)(y-x_1)|}{r}}\Big)\frac{\langle\mathbb{A}(x_0)\nabla u(y),\nabla u(y)\rangle}{\det\LL(x_0)}}\,\d y\,,
\]
and
\[
H_{u_{\mathbb{A}(x_0)}}(\PPhi^{-1}(x_1),r)=-\int {\textstyle{
\phi'\Big(\frac{|\mathbb{A}^{-\sfrac12}(x_0)(y-x_1)|}{r}\Big)
\,\frac{u^2(y)}{|\mathbb{A}^{-\sfrac12}(x_0)(y-x_1)|}\frac{1}{\det\mathbb{A}^{\sfrac12}(x_0)}}}\,\d y\,.
\]
To estimate the difference between the Dirichlet energies we introduce the sets
\begin{align*}
U_r(x)&:=\Big((x+\mathbb{A}^{\sfrac12}(x)B_r)\cup
({x_1}+\mathbb{A}^{\sfrac12}(x_0)B_r)\Big)\setminus\\
&\qquad \setminus\Big((x+\mathbb{A}^{\sfrac12}(x)B_{\sfrac r2})\cap
({x_1}+\mathbb{A}^{\sfrac12}(x_0)B_{\sfrac r2})\Big)
\end{align*}
for all $x\in B_{\sfrac 12}'$. Then, we argue as follows
 \begin{align}\label{e:comparison D}
& D_{u_{\mathbb{A}(x_1)}}(x_1,r)-D_{u_{\mathbb{A}(x_0)}}(\PPhi^{-1}(x_1),r)\notag\\&= 
\int {\textstyle{\phi\Big(\frac{|\mathbb{A}^{-\sfrac12}(x_1)(y-x_1)|}{r}}\Big)\frac{\langle\mathbb{A}(x_1)\nabla u(y),\nabla u(y)\rangle}{\det\LL(x_1)}}\,\d y\notag\\
&\quad
-\int {\textstyle{\phi\Big(\frac{|\mathbb{A}^{-\sfrac12}(x_0)(y-x_1)|}{r}}\Big)
\frac{\langle\mathbb{A}(x_0)\nabla u(y),\nabla u(y)\rangle}{\det\LL(x_0)}}\,\d y
\notag\\&= 
\int {\textstyle{\Big(\phi\Big(\frac{|\mathbb{A}^{-\sfrac12}(x_1)(y-x_1)|}{r}}\Big)-\phi\Big(\frac{|\mathbb{A}^{-\sfrac12}(x_0)(y-x_1)|}{r}\Big)\Big)
\frac{\langle\mathbb{A}(x_1)\nabla u(y),\nabla u(y)\rangle}{\det\LL(x_1)}}\,\d y\notag\\&
\quad+ \int {\textstyle{\phi\Big(\frac{|\mathbb{A}^{-\sfrac12}(x_0)(y-x_1)|}{r}\Big)\Big(\frac{\langle\mathbb{A}(x_1)\nabla u(y),\nabla u(y)\rangle}{\det\LL(x_1)}
-\frac{\langle\mathbb{A}(x_0)\nabla u(y),\nabla u(y)\rangle}{\det\LL(x_0)}\Big)}}\,\d y\notag\\
&=:D^{(1)}(r)+D^{(2)}(r).
 \end{align}
Since $|y-x_1|\leq C(\Lambda)r$ 
for all $y\in U_r(x_1)$ and $\Phi_{x_1}^{-1}(U_r(x_1))\subseteq 
B_{C(\lambda,\Lambda)r}(x_1)$, we deduce that
\begin{align}\label{e:comparison D1}
|D^{(1)}(r)|&\leq
C[\phi]_{0,1} |\mathbb{A}^{-\sfrac12}(x_0)-\mathbb{A}^{-\sfrac12}(x_1)|
 \int_{U_r(x_1)}{\textstyle{
 \frac{\langle\mathbb{A}(x_1)\nabla u(y),\nabla u(y)\rangle}{\det\LL(x_1)}}} \,\d y\notag\\
  &\leq   C |\mathbb{A}^{\sfrac12}(x_0)-\mathbb{A}^{\sfrac12}(x_1)|
\int_{\Phi_{x_1}^{-1}(U_r(x_1))}
{\textstyle{|\nabla u_{\mathbb{A}(x_1)}(y)|^2}}\,\d y
\notag\\
&
\stackrel{(H1)}{\leq} C|x_1-x_0|^{\sfrac\alpha2} 
D_{u_{\mathbb{A}(x_1)}}\big(x_1,
Cr\big)
\end{align}
and, analogously,
\begin{align}\label{e:comparison D2}
|D^{(2)}(r)|&\leq
\Big|{\textstyle{\text{Id}-
\frac{\det\LL(x_1)}{\det\LL(x_0)}\mathbb{A}^{-1}(x_1)\mathbb{A}(x_0)
}}\Big|\int_{U_r(x_1)}{\textstyle{
 \frac{\langle\mathbb{A}(x_1)\nabla u(y),\nabla u(y)\rangle}{\det\LL(x_1)}}} \,\d y\notag\\
&\stackrel{(H1)}{\leq}C|x_1-x_0|^{\sfrac\alpha2}
D_{u_{\mathbb{A}(x_1)}}\big(x_1,C
r\big)\,,
\end{align}
for some constant $C=C(n,\lambda,\Lambda,[\phi]_{0,1},[\mathbb{A}]_{0,\alpha})>0$.
For $\rho$ sufficiently small we apply iteratively Corollary~\ref{c:freq ben def} to $u_{\mathbb{A}(x_1)}$ to conclude
\begin{equation}\label{e:stima D}
|D_{u_{\mathbb{A}(x_1)}}(x_1,r)-
D_{u_{\mathbb{A}(x_0)}}(\PPhi^{-1}(x_1),r)|\leq C
|x_1-x_0|^{\sfrac\alpha2}D_{u_{\mathbb{A}(x_1)}}(x_1,r)\,.
\end{equation}

To estimate the difference of the $H$-terms we
define $[0,\infty)\ni t\mapsto\psi(t):=\sfrac{\phi'(t)}t$ (recall that 
$\phi'(t)=0$ for $t\in[0,\sfrac12]\cup[1,\infty)$) and notice that 
$\psi$ is Lipschitz continuous on $[0,\infty)$, having assumed 
$\phi\in C^{1,1}$. We then argue as follows 
 \begin{align}\label{e:comparison H}
&H_{u_{\mathbb{A}(x_1)}}(x_1,r)-H_{u_{\mathbb{A}(x_0)}}(\PPhi^{-1}(x_1),r)\notag\\ 
&=\int {\textstyle{
\psi\Big(\frac{|\mathbb{A}^{-\sfrac12}(x_0)(y-x_1)|}{r}\Big)
\,\frac{u^2(y)}{r\det\mathbb{A}^{\sfrac12}(x_0)}}}\,\d y
-\int {\textstyle{
\psi\Big(\frac{|\mathbb{A}^{-\sfrac12}(x_1)(y-x_1)|}{r}\Big)
\,\frac{u^2(y)}{r\det\mathbb{A}^{\sfrac12}(x_1)}}}\,\d y
\notag\\
&=\int\Big( {\textstyle{
\psi\Big(\frac{|\mathbb{A}^{-\sfrac12}(x_0)(y-x_1)|}{r}\Big)-
\psi\Big(\frac{|\mathbb{A}^{-\sfrac12}(x_1)(y-x_1)|}{r}\Big)
\Big)\,\frac{u^2(y)}{r\det\mathbb{A}^{\sfrac12}(x_0)}}}\,\d y
\notag\\
&+\int \psi\Big(\textstyle{\frac{|\mathbb{A}^{-\sfrac12}(x_1)(y-
x_1)|}{r}}\Big)\,\Big(\frac{1}{\det\mathbb{A}^{\sfrac12}(x_0)}-
\,\frac{1}{\det\mathbb{A}^{\sfrac12}(x_1)}\Big)
\frac{u^2(y)}{r}\,\d y=:H^{(1)}(r)+H^{(2)}(r).
 \end{align}
Therefore, we get straightforwardly that 
 \begin{align}\label{e:comparison H2}
|H^{(2)}(r)|\leq \Big|{\textstyle{
\frac{\det\mathbb{A}^{\sfrac12}(x_1)}{\det\mathbb{A}^{\sfrac12}(x_0)}}}-1\Big| H_{u_{\mathbb{A}(x_1)}}(x_1,r)
\leq C|x_1-x_0|^{\sfrac\alpha2} H_{u_{\mathbb{A}(x_1)}}(x_1,r)\,,
 \end{align}
with $C(n,\lambda,\Lambda,[\mathbb{A}]_{0,\alpha})>0$.
To estimate $H^{(1)}$ we introduce the set
\[
V_r(x_1):=\big(x_1+\mathbb{A}^{\sfrac12}(x_0)(B_r\setminus B_{\sfrac r2})\big)\cup
\big(x_1+\mathbb{A}^{\sfrac12}(x_1)(B_r\setminus B_{\sfrac r2})\big)\,,
\]
and get  
\begin{align*}
 |H^{(1)}(r)|&\leq C
|\mathbb{A}^{-\sfrac12}(x_1)-\mathbb{A}^{-\sfrac12}(x_0)|\int_{V_r(x_1)}{\textstyle{\frac{u^2(y)}{r}}}\,\d y\\
&\leq C|x_1-x_0|^{\sfrac\alpha2}\int_{V_r(x_1)}{\textstyle{\frac{u^2(y)}{r}}}\,\d y
\leq C|x_1-x_0|^{\sfrac\alpha2}\int_{\Phi_{x_1}^{-1}(V_r(x_1))}
{\textstyle{\frac{u^2_{\mathbb{A}(x_1)}(z)}{r}}}\,\d z\,,
\end{align*}
where $C(n,\lambda,\Lambda,[\phi]_{1,1},[\mathbb{A}]_{0,\alpha})>0$. 
We have that 
\[
\Phi_{x_1}^{-1}(V_r(x_1))\subseteq 
B_{(\lambda^{-1}\Lambda)^{\sfrac12}r}(x_1)\setminus
B_{\frac12(\lambda\Lambda^{-1})^{\sfrac12} r}(x_1)
\] 
and thus we may estimate the r.h.s. above as follows
\[
 |H^{(1)}(r)|\leq C|x_1-x_0|^{\sfrac\alpha2}
\int_{B_{(\lambda^{-1}\Lambda)^{\sfrac12}r}(x_1)\setminus
B_{\frac12(\lambda\Lambda^{-1})^{\sfrac12} r}(x_1)}
{\textstyle{\frac{u^2_{\mathbb{A}(x_1)}(z)}{r}}}\,\d z\,.
\]
Being $u_{\mathbb{A}(x_1)}$ a solution to a thin obstacle problem with
$\mathfrak{a}(x_1)=0$, for $\varrho$ sufficiently small Lemma~\ref{l:monotonia H} yields
 \begin{align*}
\int_{B_{(\lambda^{-1}\Lambda)^{\sfrac12}r}(x_1)\setminus B_{\frac12(\lambda\Lambda^{-1})^{\sfrac12} r}(x_1)}
u^2_{\mathbb{A}(x_1)}(z)\,\d z\,
\leq  C r H_{u_{\mathbb{A}(x_1)}}(x_1,
(\lambda^{-1}\Lambda)^{\sfrac12}r\big)\,
 \end{align*}
with $C=C([\mathbb{A}]_{0,\alpha},\mathfrak{m}(x_1))>0$.
In turn, the doubling properties of $H_{u_{\mathbb{A}(x_1)}}(x_1,\cdot)$ together with the quasi-monotonicity in \eqref{e:monotonia H} imply
\begin{align*}
 |H^{(1)}(r)|&\leq C|x_1-x_0|^{\sfrac\alpha2}
 H_{u_{\mathbb{A}(x_1)}}(x_1,r\big)\,.
\end{align*}
 Thus, we conclude that
\begin{equation}\label{e:stima H}
|H_{u_{\mathbb{A}(x_1)}}(x_1,r)-
H_{u_{\mathbb{A}(x_0)}}(\PPhi^{-1}(x_1),r)|\leq C
|x_1-x_0|^{\sfrac\alpha2}H_{u_{\mathbb{A}(x_1)}}(x_1,r)\,,
\end{equation}
and from estimates \eqref{e:stima D} and \eqref{e:stima H}
we conclude (always under the hypothesis that $\varrho$ is sufficiently small)
\[
\big|
I_{u_{\mathbb{A}(x_0)}}(\PPhi^{-1}(x_1),r)-
I_{u_{\mathbb{A}(x_1)}}(x_1,r)\big|\leq 
\frac{2C|x_1-x_0|^{\sfrac\alpha2}}{1-C|x_1-x_0|^{\sfrac\alpha2}}I_{u_{\mathbb{A}(x_1)}}(x_1,r)\,.\qedhere
\]
\end{proof}

In view of the previous estimate on the intrinsic frequency function, we can in rephrase the bound on the mean-flatness in terms of the intrinsic frequency itself dispensing with the assumption 
$\mathfrak{a}(x_0)=0$ for the base point.

For points $x_0 \in \Gamma(u)$ with ${\sup_{r\in(0,\sfrac12)}N_u(x_0,r)<\infty}$ we set
\begin{equation}\label{e:spatial osc freq intrinsic}
\Xi^r_{\rho}(x_0) := N_u(x_0,r)+Cr^\alpha - \big(N_u(x_0,\rho)+C\rho^\alpha),\qquad 0<\rho<r,
\end{equation}
for $C>0$ the constant in \eqref{e:spatial osc freq} and $r$ sufficiently small.
{Finally, we note that the semi-norm $[(\det\mA^{\sfrac 12}(x_0))\mathbb{C}_{x_0}]_{0,\alpha}$ is uniformly bounded; therefore, the constants $\theta$ in \eqref{e:theta} for this new matrices are uniformly bounded with respect to $x_0$ in view of (H1) and (H2). We denote by $\theta_0>0$ its infimum.}

\begin{proposition}\label{p:mean-flatness vs freq bis}
{For every $\mathfrak{m}_0>0$ and $R>\sfrac{64}{\theta_0}$ there exist constants $\varrho, C>0$ depending on $R$, $\mathfrak{m}_0$ and $[\mA]_{0,\alpha}$, with this property.
If $u$ is a solution to \eqref{e:prob} under assumptions (H1)-(H3), for every finite Borel measure $\mu$ 
with $\spt\mu\subseteq\Gamma^{\mathfrak{m}_0}(u)$, for every $x_0 \in
\Gamma^{\mathfrak m_0}(u)$, then for every $r>0$ with $Rr<\varrho$ and $p \in \Gamma^{\mathfrak{m}_0}(u) \cap B'_{r}(x_0)$}
\begin{equation}\label{e:mean-flatness vs freq bis}
\beta_{\mu}^2 (p,r) \leq\frac{C}{r^{n-1}}
\Big(\int_{B_{R_2r}(p)}
\Xi^{R_2 r}_{R_1r}(x)\,\d\mu(x)
+{(R_2r)^{\alpha/2}}\mu(B_{R_2r}(p))\Big)\,,
\end{equation}
{for every $R_2>\max\{2R^2, 2R+4\}$ and $R_1<\frac12(R-5)r$.}
\end{proposition}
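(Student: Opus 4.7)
The plan is to reduce the estimate to Proposition~\ref{p:mean-flatness vs freq} by means of the intrinsic change of variables $\Phi_p$ introduced in this section, which linearizes the problem at $p$ so that the $\mathfrak{a}$-deficit vanishes there, and then to convert the resulting Dirichlet-based oscillation into the intrinsic oscillation $\Xi$ via Proposition~\ref{p:comparison frequencies}.

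\medskip

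Fix $p\in\Gamma^{\mathfrak{m}_0}(u)$ and set $\tilde u:=u_{\mathbb{A}(p)}=u\circ\Phi_p$, which solves the thin obstacle problem driven by $\mathbb{C}_p$ with $\mathbb{C}_p(p)=\mathrm{Id}$; in particular $\mathfrak{a}_{\mathbb{C}_p}(p)=0$, so Proposition~\ref{p:mean-flatness vs freq} applies at $p$ to $\tilde u$. Let $\tilde\mu:=(\Phi_p^{-1})_\sharp\mu$, which is a finite Borel measure supported on $\Gamma(\tilde u)=\Phi_p^{-1}(\Gamma(u))$. Since $\Phi_p$ is affine with distortion controlled by $\lambda,\Lambda$, it maps affine $(n-1)$-planes to affine $(n-1)$-planes and distorts Euclidean distances by bounded factors, so for a suitable $r'$ comparable to $r$ one has
\[
\beta_\mu^2(p,r)\leq C\,\beta_{\tilde\mu}^2(p,r')\qquad\text{and}\qquad \Phi_p(B_{r'}(p))\subseteq B_{Cr}(p),
\]
with constants depending only on $\lambda,\Lambda$.

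\medskip

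Applying Proposition~\ref{p:mean-flatness vs freq} to $\tilde u,\tilde\mu$ at the base point $p$ (with the corresponding $\theta_0$-threshold) and then changing variables $\tilde x=\Phi_p^{-1}(x)$ yields
\[
\beta_{\tilde\mu}^2(p,r')\leq \frac{C}{(r')^{n-1}}\Big(\int_{\Phi_p(B_{r'}(p))}\Delta^{s_1}_{s_2}(\Phi_p^{-1}(x))\,d\mu(x)+(Rr')^\alpha\mu(\Phi_p(B_{r'}(p)))\Big),
\]
where $s_1=(2R+4)r'$, $s_2=(R-5)r'/2$, and $\Delta$ denotes the radial oscillation of the frequency $I_{\tilde u}=I_{u_{\mathbb{A}(p)}}$. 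The final step replaces $\Delta$ by $\Xi$ via Proposition~\ref{p:comparison frequencies}: for every $x\in\spt\mu\cap B_{Cr}(p)\subseteq\Gamma^{\mathfrak{m}_0}(u)$ and every scale $s\in[s_2,s_1]$, provided $R$ is large enough that the smallness condition $|x-p|<C^{-1}s$ is uniformly satisfied,
\[
\big|I_{u_{\mathbb{A}(p)}}(\Phi_p^{-1}(x),s)-N_u(x,s)\big|\leq C\,|x-p|^{\alpha/2}N_u(x,s)\leq C\,r^{\alpha/2},
\]
so that $\Delta^{s_1}_{s_2}(\Phi_p^{-1}(x))\leq \Xi^{s_1}_{s_2}(x)+C\,r^{\alpha/2}$ on the range of integration.

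\medskip

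Choosing $R_1<(R-5)/2$ and $R_2>\max\{2R^2,2R+4\}$ ensures both $[s_2,s_1]\subseteq[R_1r,R_2r]$ and $B_{Cr}(p)\subseteq B_{R_2r}(p)$ uniformly in the affine distortion; combined with the monotonicity of $\Xi$ inherited from Corollary~\ref{c:quasi additive monotonicity} via the intrinsic change of coordinates at each point of $\Gamma^{\mathfrak{m}_0}(u)$, this yields the claimed bound, with the error $(R_2r)^{\alpha/2}\mu(B_{R_2r}(p))$ absorbing both the $(Rr')^\alpha$ term from Proposition~\ref{p:mean-flatness vs freq} and the $r^{\alpha/2}$ correction from the comparison. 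The main technical obstacle is the careful bookkeeping of the several radius scales through the affine change of coordinates $\Phi_p$ and the uniform verification of the smallness hypothesis of Proposition~\ref{p:comparison frequencies} across the range of points and scales appearing in the integral; this is precisely what forces the quantitative thresholds $R_1<(R-5)/2$ and $R_2>2R^2$ in the statement, needed so that the outer oscillation scale dominates the distortion $|x-p|$ for every $x$ in the enlarged ball of integration.
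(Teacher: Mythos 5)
Your argument is correct and follows essentially the same strategy as the paper: push the measure forward by the intrinsic affine change of coordinates, apply Proposition~\ref{p:mean-flatness vs freq} (which requires $\mathfrak a=0$ at the base point), undo the change of variables, and use Proposition~\ref{p:comparison frequencies} to replace the transformed frequency oscillation with $\Xi$. The only (harmless, and if anything slightly cleaner) deviation is that you center the change of variables $\Phi_p$ at $p$ rather than at $x_0$, which is legitimate since $p\in\Gamma^{\mathfrak m_0}(u)$ and $\Phi_p$ fixes $p$, thereby avoiding the bookkeeping of the shift between $p$ and $\Phi_{x_0}^{-1}(p)$ that the paper absorbs into the radii.
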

\begin{proof}
Set $\mu_{\mathbb{A}(x_0)}:=(\PPhi^{-1})_\#\mu$, then $\spt(\mu_{\mathbb{A}(x_0)})\subseteq
\Gamma(u_{\mathbb{A}(x_0)})$. Note that 
$\PPhi^{-1}(B_r(p))=x_0+\mathbb{A}^{-\sfrac12}(x_0)B_r(p-x_0)\subseteq B_{(1+2\lambda^{-\sfrac12})r}(p)$. 
Thus, from the very definition of the mean flatness $\beta_\mu$ we infer that for all $p\in B_r'(x_0)$
\begin{align*}
\beta_{\mu}^2(p,r)
&=\inf_{\cL} r^{-n-1} \int_{B_r(p)} \dist^2(y,\cL)\d(\PPhi)_\#\mu_{\mathbb{A}(x_0)}(y)\notag\\
&=\inf_{\cL} r^{-n-1} \int_{\PPhi^{-1}(B_r(p))} \dist^2(\PPhi(y),\cL)\d\mu_{\mathbb{A}(x_0)}(y)\notag\\
&=\inf_{\cL} r^{-n-1} \int_{\PPhi^{-1}(B_r(p))}\dist^2(\PPhi(y),\PPhi(\cL))\d\mu_{\mathbb{A}(x_0)}(y)\notag\\
&\leq \Lambda
\inf_{\cL} r^{-n-1} \int_{\PPhi^{-1}(B_r(p))}\dist^2(y,\cL)
\d\mu_{\mathbb{A}(x_0)}(y)\notag\\
&\leq C R^{n+1}\Lambda
\beta_{\mu_{\mathbb{A}(x_0)}}^2(p,R r),
\end{align*}
if {$R\geq 1+2\lambda^{-\sfrac12}$}.
Since, $u_{\mathbb{A}(x_0)}$ satisfies the hypotheses of Proposition~\ref{p:mean-flatness vs freq} 
in $x_0$, recalling that $\mu_{\mathbb{A}(x_0)}=(\PPhi^{-1})_\#\mu$ we deduce that
\begin{align*}
\beta_\mu^2(p,r)&\leq C R^{n+1}\Lambda\,
\beta_{\mu_{\mathbb{A}(x_0)}}^2(p,R r)\\
&\leq\frac{C}{r^{n-1}}
\Big(\int_{B_{Rr}(p)}
\big(\Delta_{u_{\mathbb{A}(x_0)}}\big)^{(2R+4)r}_{\frac12(R-5)r}(x)\d\mu_{\mathbb{A}(x_0)}(x)+(Rr)^\alpha\mu_{\mathbb{A}(x_0)}(B_{Rr}(p))\Big)\\
&=\frac{C}{r^{n-1}} \int_{\PPhi(B_{Rr}(p))}
\big(\Delta_{u_{\mathbb{A}(x_0)}}\big)^{(2R+4)r}_{\frac12(R-5)r}
(\PPhi^{-1}(x))\,\d\mu(x)\\
&+\frac{C}{r^{n-1}}
(Rr)^\alpha\mu(\PPhi(B_{Rr}(p))\,,
\end{align*}
{where we denote by $\Delta_{u_{\mathbb{A}(x_0)}}$ the quantity defined
in \eqref{e:spatial osc freq} by means of $I_{u_{\mathbb{A}(x_0)}}$, {$R\geq (1+2\lambda^{-\sfrac12})\vee\frac{64}{\theta_0}$}, and $r$ is sufficiently small (cf. Proposition~\ref{p:mean-flatness vs freq})}.

Eventually, Proposition~\ref{p:comparison frequencies} provides 
the conclusion as $\spt\mu\subseteq\Gamma^{\mathfrak m_0}(u)$, i.e.,
\begin{align*}
\beta_\mu^2(p,r)
&\leq\frac{C}{r^{n-1}}\Big(\int_{B_{R^2r}(p)}
\Xi^{R_2 r}_{R_1 r}(x)
\,\d\mu(x)+(Rr)^{\alpha/2}\mu(B_{R^2r}(p))\Big)\,,
\end{align*}
for $R_2>\max\{2R^2, 2R+4\}$ and $R_1<\frac12(R-5)r$, 
{and $r$ is sufficiently small (cf. Proposition~\ref{p:comparison frequencies})}.
\end{proof}

\section{The measure and the structure of the free boundary}\label{s:misura}

We recall the definition of homogeneous and almost homegeneous solutions to the (standard) 
thin obstacle problem:
\begin{align*}
\cH := \Big\{ w\in H^1_\loc(\R^{n+1})\setminus\{0\}
 &:\, w(x) = |x|^\lambda\,w\big({\textstyle{\frac{x}{|x|}}}\big),\,\lambda\geq \sfrac{3}{2},\\
&\text{ $w\vert_{B_1}$ solves \eqref{e:prob} with $\mA\equiv \Id$} \Big\},
\end{align*}
Given a solution $u$ to \eqref{e:prob} with $\mA$ satisfying (H1)-(H3), we set
\begin{equation}
\Gamma^{\textup{finite}}(u):= \left\{x\in \Gamma(u) \; :\; \limsup_{r\to 0^+}N_u(x, r)<+\infty\right\}.
\end{equation}
Note that, for every $\mathfrak{m}_0>0$ we have that $\Gamma^{\mathfrak{m}_0}(u)\subseteq \Gamma^{\textup{finite}}(u)$.
For any point $x_0\in \Gamma^{\textup{finite}}(u)$, we set
\[
J_u (x_0,t) := e^{C t^\alpha} N_u(x_0,t),
\]
for all $t>0$ such that $J_u(x_0,t)$ is monotone, namely for all $t\in (0,\varrho)$ with $\varrho>0$
a constant depending on $[\mA]_{0,\alpha}$ and $\mathfrak{m}_0$ as in the statement of Proposition \ref{p:monotonia freq}.

\begin{definition}\label{d:almost hom}
Let $\eta>0$ and let $u:B_1\to\R$ be a solution to thin obstacle problem \eqref{e:prob}. Assume that $x_0\in {\Gamma^{\textup{finite}}(u)\cap B'_{\sfrac12}}$ and $r\in(0,\sfrac12)$ is such that $J_u(x_0,r)$ is defined. Then, $u$ is called
\textit{$\eta$-almost homogeneous of \eqref{e:prob} in $B_r(x_0)$} if
\[
J_u(x_0,\sfrac{r}{2}) - J_u(x_0,\sfrac{r}{4})\leq \eta.
\]
\end{definition}

The following lemma justifies this terminology.

\begin{lemma}\label{l:almost hom}
For every $\eps>0$ and $\mathfrak m_0>0$, there exist $\eta, \varrho>0$ with the following property: 
if $u$ is a $\eta$-almost homogeneous solution in $ B_r(x_0) $ with $ r\leq \varrho$ and $x_0\in\Gamma^{\mathfrak{m}_0}(u)\cap B_{\sfrac12}'$, then
\begin{align}\label{e:almost hom}
\inf_{w\in \cH}\big\| (u_{\mA(x_0)})_{x_0,r} - w\big\|_{H^1(B_{1})} \leq \eps.
\end{align}
\end{lemma}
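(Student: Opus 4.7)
The plan is by contradiction and blow-up. Suppose the statement fails: there exist $\varepsilon_0>0$ and sequences $\eta_j\downarrow 0$, $r_j\downarrow 0$, solutions $u_j$ of \eqref{e:prob} with matrices $\mA_j$ satisfying (H1)-(H3) uniformly, and points $x_j\in\Gamma^{\mathfrak{m}_0}(u_j)\cap B'_{\sfrac12}$ such that $u_j$ is $\eta_j$-almost homogeneous in $B_{r_j}(x_j)$, yet
\[
\inf_{w\in\cH}\|v_j-w\|_{H^1(B_1)}>\varepsilon_0,\qquad v_j:=(u_{j,\mA_j(x_j)})_{x_j,r_j}.
\]
Each $v_j$ solves the thin obstacle problem on $B_1$ for the matrix $y\mapsto\mathbb{C}_{x_j}(x_j+r_jy)$, which has the same ellipticity constants as $\mA_j$, satisfies $\mathbb{C}_{x_j}(x_j)=\Id$ and, by (H1), converges to $\Id$ in $C^0_{\loc}(\R^{n+1})$ as $j\to\infty$. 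The bound $N_{u_j}(x_j,r_j)\leq\mathfrak{m}_0$ translates via the scaling identity $I_{v_j}(0,\rho)=N_{u_j}(x_j,r_j\rho)$ into $I_{v_j}(0,1)\leq\mathfrak{m}_0$, so Proposition~\ref{c:compactness} yields, up to a subsequence, $v_j\to v_\infty$ in $C^{1,\gamma}_{\loc}(B_1^+\cup B_1')$, where $v_\infty$ solves the constant coefficient thin obstacle problem for $\mA\equiv\Id$, with $H_{v_\infty}(0,1)=1$ and $0\in\Gamma(v_\infty)$.

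The $\eta_j$-almost homogeneity of $u_j$ in $B_{r_j}(x_j)$ rewrites as
\[
e^{C(r_j/2)^\alpha}I_{v_j}(0,\sfrac12)-e^{C(r_j/4)^\alpha}I_{v_j}(0,\sfrac14)\leq\eta_j.
\]
Since $I_{v_j}(0,\cdot)$ is uniformly bounded and the exponential factors tend to $1$, passing to the limit (the $C^{1,\gamma}_{\loc}$ convergence of $v_j$ identifies $I_{v_j}(0,\rho)\to I_{v_\infty}(0,\rho)$ for $\rho\in\{\sfrac14,\sfrac12\}$) together with the quasi-monotonicity of $J_{u_j}$ from Corollary~\ref{c:quasi additive monotonicity} gives
\[
I_{v_\infty}(0,\sfrac14)=I_{v_\infty}(0,\sfrac12)=:\lambda\geq\sfrac32,
\]
the lower bound coming from Corollary~\ref{c:quasi additive monotonicity} applied to $u_{j,\mA_j(x_j)}$ at $x_j$ (where $\mathfrak{a}(x_j)=0$). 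Since $v_\infty$ solves the $\Id$-problem the remainder in \eqref{e:Iuprime bis} vanishes and $I'_{v_\infty}(t)=\tfrac{2t}{H_{v_\infty}^2}(E_{v_\infty}H_{v_\infty}-G_{v_\infty}^2)\geq 0$; constancy on $[\sfrac14,\sfrac12]$ forces equality in Cauchy-Schwarz, i.e.\ $\nabla v_\infty(z)\cdot z=\lambda\,v_\infty(z)$ pointwise on the annulus $\{\sfrac18<|z|<\sfrac12\}=\bigcup_{t\in[\sfrac14,\sfrac12]}\{|z|/t\in(\sfrac12,1)\}$. Unique continuation (via analyticity of $v_\infty$ on $B_1\setminus B_1'$ and even symmetry across $B_1'$) propagates $\lambda$-homogeneity to all of $B_1$; extending by $|x|^\lambda v_\infty(x/|x|)$ yields a function in $\cH$.

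The main remaining step, and the principal technical obstacle, is to upgrade the $C^{1,\gamma}_{\loc}$ convergence to strong $H^1(B_1)$ convergence. The plan is to use the doubling estimates of Proposition~\ref{p:doubling} applied to $u_{j,\mA_j(x_j)}$ at scales up to a fixed multiple of $r_j$: these give $D_{v_j}(0,2)\leq C$, hence (since $\phi\equiv 1$ on $[0,\sfrac12]$ implies $\int_{B_1}|\nabla v_j|^2\leq D_{v_j}(0,2)$) a uniform $H^1(B_1)$ bound and therefore weak $H^1(B_1)$ convergence to $v_\infty$. For every $r<1$, the $C^{1,\gamma}_{\loc}$ convergence upgrades this to strong $H^1(B_r)$ convergence, and equi-integrability of $|\nabla v_j|^2$ near $\partial B_1$ (via control of $D_{v_j}(0,\tau)-D_{v_j}(0,r)$ for $r<1<\tau$ provided by the same doubling estimates) lets $r\uparrow 1$ to conclude strong $H^1(B_1)$ convergence. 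This contradicts $\inf_{w\in\cH}\|v_j-w\|_{H^1(B_1)}>\varepsilon_0$ and completes the proof.
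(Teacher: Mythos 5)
Your proposal follows the same contradiction-and-compactness scheme as the paper's proof: rescale by $\Phi_{x_j}$ and $r_j$, pass to a constant-coefficient limit $v_\infty$ via Proposition~\ref{c:compactness}, show it is nonzero with constant frequency on $[\sfrac14,\sfrac12]$, and conclude $v_\infty\in\cH$, contradicting the separation from $\cH$. The paper is terser — it invokes \cite[Proposition~2.7]{FS18-1} for ``constant frequency implies homogeneity'' rather than re-deriving the Cauchy--Schwarz equality and unique-continuation step — whereas you fill in both, which is fine; the one place your write-up is imprecise is the $H^1(B_1)$ upgrade, where the ``equi-integrability near $\partial B_1$ from doubling'' claim is not how one would close the argument: the cleaner point, implicit in the paper, is that $I_{v_j}(\underline 0,\rho)=N_{u_j}(x_j,r_j\rho)\leq\mathfrak m_0$ for all $\rho<\tfrac{1}{2r_j}$, so the Schauder/compactness estimates of Theorem~\ref{t:reg} hold on $\overline{B}_1\subset\subset B_2$ for $j$ large, giving $C^{1,\gamma}(\overline{B}_1)$ and hence strong $H^1(B_1)$ convergence outright.
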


\begin{proof}
The proof follows by a contradiction argument similar to \cite[Lemma~5.5]{FS18-1}. 
Assume that for $ \eps>0 $ we could find sequences $ r_l $ of numbers and $u_l$ of $\frac1l$-almost homogeneous solutions in ${B_{r_l}(x_l)}$, such that
\begin{equation}\label{e.lontano}
 \inf_l\inf_{w\in\cH} 
 \big\| \big((u_l)_{{\mA(x_l)}}\big)_{x_l,r_l} - w\big\|_{H^1(B_{1})}\geq \eps\,,
 \end{equation}
with $ x_l \in \Gamma^{\textup{finite}}(u_l) \cap B_{\sfrac12}'$ and $\mathfrak m(x_l)\leq \mathfrak m_0$.
By Proposition \ref{c:compactness} there exists a subsequence (not relabeled) of $ v_l= \big((u_l)_{{\mA(x_l)}}\big)_{x_l,r_l}$ converging to a solution $ v_\infty $ of the thin obstacle problem in $ B_1$ for the standard Dirichlet energy. Moreover, we can assume that the points $ x_l $ converge to $ x_\infty \in \bar{B}_{\sfrac12}'$.
From Proposition \ref{p:doubling} we infer that
\[ 
H_{v_\infty} (\sfrac12) = \lim_l H_{v_l} (\sfrac12) \geq C \lim_l H_{v_l}(1) > 0\,,
\]
so that $ v_\infty $ is not zero.
On the other hand, we have that
\[ 
I_{v_\infty}(\sfrac12) - I_{v_\infty} (\sfrac 14)= \lim_l ( I_{v_l}(\sfrac12) - I_{v_l} (\sfrac 14))= 
\lim_l (J_u(x_l, \sfrac{r_l}{2})- J_u(x_l, \sfrac{r_l}{4})) =0. 
\]
This implies that $ v_\infty $ is a solution with constant frequency and thus is homogeneous (see for instance \cite[Proposition~2.7]{FS18-1}), contradicting \eqref{e.lontano}.
\end{proof}

A rigidity property of the type shown in \cite[Proposition~5.6]{FS18-1} holds in the case of non smooth coefficients as well. We call spine $S(w)$ of a function $w \in \cH$ the maximal subspace of invariance 
of $u$,
\[
S(w) := \Big\{ y\in\R^n\times \{0\}\;:\; w(x+y) = w(x) \quad \forall\; x\in \R^{n+1}\Big\}.
\]
We recall that the maximal dimension of the spine of a function $w$ in $\cH$ is at most $n-1$ {(cf. \cite[Section 5.2]{FS18-1})}, and we set $\cH^\top$ for the set of homogeneous solutions $w$
with $\dim S(w) = n-1$; whereas $\cH^\low:=\cH\setminus \cH^\top$.

\begin{proposition}\label{p:rigidity}
For every $\tau>0$ and $\mathfrak m_0>0$, there exists $\eta, \varrho>0$ with this property.
If $u$ is a $\eta$-almost homogeneous solution in $ B_r (x_0) $, $r\leq \varrho$ and $x_0\in\Gamma^{\mathfrak{m}_0}(u)\cap B_{\sfrac12}'$ with $\mathfrak m(x_0)\leq \mathfrak m_0$,
then the following dichotomy holds:
\begin{itemize}
\item[(i)] either for every point $x\in \Gamma^{\mathfrak{m}_0}(u)\cap B_{\sfrac r2}'(x_0)$ we have
\begin{align}\label{e:rigidity1}
\left\vert J_u(x,\sfrac r2)  - J_u(x_0,\sfrac r2)\right\vert\leq\tau,
\end{align}
\item[(ii)] or there exists a linear subspace $V\subset\R^{n}\times\{0\}$ of dimension $n-2$ such that
\begin{align}\label{e:rigidity2}
\begin{cases}
y\in\Gamma^{\mathfrak{m}_0}(u)\cap B_{\sfrac r2}'(x_0),\\
J_u(y,\sfrac r8) - J_u(y,\sfrac r{16})\leq \eta
\end{cases} 
\quad\Longrightarrow\quad \dist(y,x_0+V)\leq\tau r.
\end{align}
\end{itemize}
\end{proposition}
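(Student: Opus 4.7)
The plan is to argue by contradiction using a cone-splitting mechanism in the spirit of \cite[Proposition~5.6]{FS18-1}, adapted to the intrinsic frequency $J_u$ via the comparison estimate of Proposition~\ref{p:comparison frequencies}. Suppose the conclusion fails. Then there exist $\tau>0$, sequences $\eta_j\downarrow 0$ and $r_j\downarrow 0$, solutions $u_j$ to \eqref{e:prob} and base points $x_j\in\Gamma^{\mathfrak{m}_0}(u_j)\cap B'_{\sfrac12}$, each $\eta_j$-almost homogeneous in $B_{r_j}(x_j)$, such that both (i) and (ii) fail, the latter for every $(n-2)$-dimensional subspace $V$. I would set $v_j:=(u_{j,\mA(x_j)})_{x_j,r_j}$ and, via Proposition~\ref{c:compactness} applied to the matrix fields $\mathbb{C}_{x_j}$ (which satisfy (H1)-(H3) uniformly and have $\mathbb{C}_{x_j}(x_j)=\Id$), extract a subsequence with $x_j\to x_\infty\in\overline B'_{\sfrac12}$ and $v_j\to w$ in $C^{1,\gamma}_{\loc}(B_1^+\cup B_1')$ and strongly in $H^1(B_1)$. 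Lemma~\ref{l:almost hom} then identifies $w$ as an element of $\cH$, homogeneous of some degree $\lambda\geq\sfrac32$.

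Exploiting the failure of (ii), by an iterative selection (extending at each step the span of the previously chosen vectors to an $(n-2)$-dimensional subspace and applying the failure of (ii) for that subspace) one obtains points $y_j^{(k)}\in\Gamma^{\mathfrak{m}_0}(u_j)\cap B'_{\sfrac{r_j}{2}}(x_j)$, $k=1,\ldots,n-1$, each $\eta_j$-almost homogeneous in $B_{\sfrac{r_j}{8}}(y_j^{(k)})$, with $\xi_j^{(k)}:=r_j^{-1}(y_j^{(k)}-x_j)$ staying at distance $>\tau$ from $\mathrm{span}\{\xi_j^{(1)},\ldots,\xi_j^{(k-1)}\}$. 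Up to subsequences $\xi_j^{(k)}\to\xi_\infty^{(k)}$ are linearly independent in $\R^n\times\{0\}$. Applying Lemma~\ref{l:almost hom} to $u_j$ around $y_j^{(k)}$, together with the intrinsic comparison \eqref{e:comparison I} (whose error $O(|x_j-y_j^{(k)}|^{\sfrac{\alpha}{2}})=O(r_j^{\sfrac{\alpha}{2}})$ is infinitesimal), one sees that the translate $\xi\mapsto w(\mA^{\sfrac12}(x_\infty)\xi_\infty^{(k)}+\xi)$ is proportional to a function in $\cH$, hence homogeneous of degree $\lambda$. A standard Euler-identity computation shows that a function which is $\lambda$-homogeneous with two distinct centers is invariant along the vector connecting them; therefore each $\mA^{\sfrac12}(x_\infty)\xi_\infty^{(k)}$ belongs to $S(w)$. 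Since $\mA^{\sfrac12}(x_\infty)$ preserves $\R^n\times\{0\}$ by (H3), we conclude $\dim S(w)\geq n-1$, i.e., $w\in\cH^\top$.

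To close the argument, I use the failure of (i): there exist $z_j\in\Gamma^{\mathfrak{m}_0}(u_j)\cap B'_{\sfrac{r_j}{2}}(x_j)$ with $|J_{u_j}(z_j,\sfrac{r_j}{2})-J_{u_j}(x_j,\sfrac{r_j}{2})|>\tau$. Setting $\tilde\zeta_j:=\mA^{-\sfrac12}(x_j)r_j^{-1}(z_j-x_j)\to\tilde\zeta_\infty\in\overline B'_{\sfrac12}$, the $C^{1,\gamma}$-convergence of $v_j$ to $w$ ensures $\tilde\zeta_\infty\in\Gamma(w)$; but for $w\in\cH^\top$ the free boundary inside $B'_1$ coincides with $S(w)\cap B'_1$, whence $\tilde\zeta_\infty\in S(w)$. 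By \eqref{e:comparison I} and the strong convergence,
\[
\lim_{j\to\infty}\bigl(J_{u_j}(z_j,\sfrac{r_j}{2})-J_{u_j}(x_j,\sfrac{r_j}{2})\bigr)=I_w(\tilde\zeta_\infty,\sfrac12)-I_w(\underline 0,\sfrac12)=\lambda-\lambda=0,
\]
because $I_w(\cdot,t)\equiv\lambda$ along $S(w)$ by translation invariance and homogeneity. This contradicts the strict lower bound $\tau$ on the left-hand side.

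The main obstacle is the rigorous execution of the cone-splitting step: the blowup limits of $u_j$ at the different base points $y_j^{(k)}$ are a priori taken using the matrices $\mA(y_j^{(k)})$ rather than $\mA(x_j)$, and it is only after invoking Proposition~\ref{p:comparison frequencies} to transfer the almost-homogeneity of $u_j$ at $y_j^{(k)}$ into an approximate homogeneity of $w$ centered at $\mA^{\sfrac12}(x_\infty)\xi_\infty^{(k)}$ that the different intrinsic pictures collapse to a single homogeneous limit with $n-1$ centers of homogeneity. All the remaining ingredients -- continuity of the free boundary under $C^{1,\gamma}$-convergence, the two-centers rigidity for homogeneous functions, and the classification of $\cH^\top$ -- are standard.
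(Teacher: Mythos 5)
Your proposal is correct in spirit but takes a genuinely different and more elaborate route than the paper's own argument. The paper runs a clean \emph{dichotomy}: after extracting the blowup $v_\infty\in\cH$ (where the $v_l$ are the direct rescalings $(u_l)_{\underline 0,r_l}$, the intrinsic matrices $\mA(x_l)$ converging to $\Id$ so that the comparison machinery is implicit in a ``simple change of variables''), it splits into two mutually exclusive cases. If $v_\infty\in\cH^\top$, then the sequence $z_l=r_l^{-1}x_l$ coming from the failure of (i) converges to a critical point $z_\infty$ of $v_\infty$, and the constancy of the frequency at critical points of top-dimensional homogeneous solutions \cite[Lemma~5.3]{FS18-1} contradicts the lower bound $\tau$. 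If instead $v_\infty\in\cH^\low$, then $\dim S(v_\infty)\leq n-2$; one fixes a \emph{single} $(n-2)$-plane $V\supset S(v_\infty)$, applies the failure of (ii) only for that $V$, obtains $z_\infty$ at distance $\geq\tau$ from $V$ whose radial frequency is constant on $(\sfrac1{16},\sfrac18)$, and concludes via \cite[Proposition~2.7, Lemma~5.2]{FS18-1} that $z_\infty\in S(v_\infty)\subseteq V$, a contradiction. So exactly one failure is invoked in each branch.

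Your version instead deploys an $(n-1)$-fold iterative cone-splitting, using the failure of (ii) at each of $n-1$ nested subspaces to manufacture $n-1$ independent invariance directions for $w$ and thereby force $w\in\cH^\top$, and only then calls on the failure of (i). This works, but it is overkill: the dichotomy avoids the iteration entirely. It also requires more careful bookkeeping of how the intrinsic frequencies at the shifted centers $y_j^{(k)}$ relate to the single limit $w$, which you correctly identify as the delicate point and handle via Proposition~\ref{p:comparison frequencies}. Two small imprecisions to fix in a final write-up: the limiting centers in the blowup coordinates are $\mA^{-\sfrac12}(x_\infty)\xi_\infty^{(k)}$, not $\mA^{\sfrac12}(x_\infty)\xi_\infty^{(k)}$ (since $r_j^{-1}\Phi_{x_j}^{-1}(y_j^{(k)})=\mA^{-\sfrac12}(x_j)\xi_j^{(k)}$); and in the last step $\tilde\zeta_\infty$ should be treated as a \emph{critical point} of $w$ (both $w$ and $\nabla w$ vanish there by $C^1$ convergence), not a free-boundary point of $w$, since limits of free boundary points may well land in the interior of the contact set of the blowup — and it is the constancy of the frequency along the set of critical points of a $\cH^\top$ solution that the paper invokes.
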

\begin{proof}
The proof proceeds by contradiction and follows the strategy in \cite[Proposition~5.6]{FS18-1}.
Let $\tau>0$ be a given constant and assume that there exist $r_l$ and a sequence $(u_l)_{l\in\N}$ of $\sfrac 1l$-almost homogeneous solutions in $B_{r_l}$ {(this clearly holds up to horizontal translations)} such that
 \begin{itemize}
 \item[(i)] there exists $x_l\in \Gamma^{\mathfrak{m}_0}(u_l)\cap B_{\sfrac{r_l}2}'$  for which
\begin{align}\label{e:rigidity1-contra}
\left\vert J_u(x_l , \sfrac{r_l}2) - J_u(\underline 0, \sfrac{r_l}2)\right\vert>\tau,
\end{align}
 \item[(ii)] for every linear subspace $V\in\R^n\times\{0\}$
 of dimension $n-2$ there exists  $y_l\in\Gamma^{\mathfrak{m}_0}(u_l)\cap  B_{\sfrac{r_l}2}'(x_0)$
(a priori depending on $V$) such that
 \begin{align}\label{e:rigidity2-contra}
 J_u(y_l, \sfrac{r_l}8) - J_u(y_l, \sfrac{r_l}{16})
 \leq\sfrac 1l
 \quad\text{and}\quad \dist\big(y_l, V\big) > \tau{r_l}.
 \end{align}
 \end{itemize}
We consider the rescaled functions $v_l:= (u_l)_{\underline 0,r_l}$. By the compactness result in Corollary~\ref{c:compactness} $ v_l $ converge, up to a subsequence, to a not zero solution to the thin obstacle problem with constant coefficients $ v_\infty $. In particular $v_\infty \in \mathcal{H}$ thanks to Lemma~\ref{l:almost hom}.  

If $v_\infty \in \cH^\top$, then \eqref{e:rigidity1-contra} is contradicted. Indeed, up to choosing a further subsequence, we can assume that $z_l:=r_l^{-1}x_l\to z_\infty \in \bar B_{\sfrac 12}$; moreover, $z_\infty$ is a critical point for $v_\infty$, because both $v_l(z_l) = |\nabla v_l(z_l)|=0$ and the convergence is $C^1$, and by a simple change of variables we have that 
\begin{align*}
\left\vert I_{v_\infty}(z_\infty,\sfrac12) - 
 I_{v_\infty}(0,\sfrac12)\right\vert
  &=\lim_{l\to\infty}\left\vert J_u(x_l, \sfrac {r_l}2) - J_u(0, \sfrac{r_l}2)\right\vert\geq \tau,
\end{align*}
 which is a contradiction to the constancy of the frequency at critical points of homogeneous solutions $v_\infty\in \cH^{\top}$ (see \cite[Lemma~5.3]{FS18-1}).

On the other hand, if $v_\infty\in\cH^\low$, we show a contradiction to {the second condition in }\eqref{e:rigidity2-contra} with $V$ any $(n-2)$-dimensional subspace containing $S(v_\infty)$.
Indeed, let $y_l$ be as in \eqref{e:rigidity2-contra} for such a choice of $V$. By compactness, up to passing to a subsequence (not relabeled), $z_l:=r_l^{-1}y_l\to z_\infty$ for some $z_\infty\in \bar B_{\sfrac12}$ with $\dist(z_\infty, V)\geq \tau>0$. 
Arguing as before, we obtain
 \begin{align*}
\left\vert I_{v_\infty}\big(z_\infty,\sfrac18\big) - 
 I_{v_\infty}\big(z_\infty,\sfrac1{16}\big)\right\vert 
 &=\lim_{l\to\infty}\left\vert J_u(y_l,\sfrac{r_l}8) - J_u(y_l,\sfrac{r_l}{16})\right\vert\leq \lim_{l\to\infty} \sfrac1l =0\,,
 \end{align*}
where the last inequality is given by the $\sfrac1l$-almost homogeneity of the functions $u_l$ {cf. the first condition in \eqref{e:rigidity2-contra})}.
Using \cite[Proposition~2.7, Lemma~5.2]{FS18-1} it follows that $z_\infty \in S(v_\infty)$, from which we infer a contradiction as  $z_\infty \in S(v_\infty)\subseteq V$ and $\dist(z_\infty,V)\geq \tau$.
\end{proof}

\subsection*{Proof of Theorem \ref{t:main}}
The proof is now a simple consequence of the results established in the previous sections. Indeed, we can follow verbatim \cite[Section 6]{FS18-1} (see also \cite[\S 5.3]{FS22}).
Recall that
\[
\Gamma^{\textup{finite}}(u):= \left\{x\in B_1' : \limsup_{r\to 0^+} N_u(x,r)<+\infty\right\}.
\]
By a simple rescaling argument, if $x_0\in B_1'$ 
and $r< \dist(x_0, \partial B_1)$, then the function $u_r(y):=u(x_0+ry)$ solves a thin obstacle problem \eqref{e:minimization} with $\mathbb{A}$ satisfying (H1) - (H3) and 
\[
\Gamma^{\textup{finite}}(u_r)\cap B'_{\frac12}= \bigcup_{\mathfrak{m}_0\geq \frac{3}{2}} \Gamma^{\mathfrak{m}_0}(u_r).
\]
Therefore, it is enough to show that $\Gamma^{\mathfrak{m}_0}(u) \cap B'_{\sfrac12}$ is rectifiable. To this aim we fix $\rho_0>0$ such that the conclusions of all propositions in the previous sections hold for points $x\in \Gamma^{\mathfrak{m}_0}(u) \cap B'_{\sfrac12}$ and radii $\rho\leq \rho_0$.
We can then follow the proof of \cite[Section 6]{FS18-1} applied to the intrinsic frequency $N_u$ starting at $\rho_0$: indeed, the proof uses only the lower bound of the frequency (cf. Corollary \ref{c:freq ben def}), the estimate of the spatial oscillation of the frequency in terms of the mean-flatness (cf. Proposition~\ref{p:mean-flatness vs freq bis}) and the rigidity of Proposition~\ref{p:rigidity}, together with the Reifenberg-type rectifiability criteria provided in the work by Naber and Valtorta \cite{NaVa1}.

Finally, we note that the proof of the rectifiability also gives the local finiteness of the measure of each $\Gamma^{\mathfrak{m}_0}(u)$, which we will use for the proof of
Theorem~\ref{t:lip} in the next section.

\section{Finiteness of the frequency for $ \mA \in W^{1,\infty} $}\label{s:A lipschitz}
In this section we prove the finiteness of the intrinsic frequency {$N_u$} at all free boundary points for a solution $u$ of \eqref{e:ob-pb local} assuming that the matrix field $\mA$ satisfies
(H1) with $p=\infty$, (H2), and (H3) (see also \cite{GS14}). Given this for granted, Theorem \ref{t:lip} is then an immediate consequence of Theorem \ref{t:main}.

We first establish several auxuliary results under the simplifying assumptions that the 
base point is the origin and that $\mA(\underline 0)=\Id$ in the spirit of \cite[Section 3.2]{FoGeSp15}.
Consider then the function $\mu:B_1\to[0,\infty)$ defined by
\[
\mu(x):=\langle \mA(x) \nu(x),\nu (x)\rangle \;\mbox{ if } x\neq 0 \; \mbox{ and }\;\mu(\underline{0})=1,
\]
where $ \nu(x)=\frac{x}{|x|} $. {Recalling that $\mA$ is Lipschitz continuous we infer that} 
$ \mu \in C^{0,1}(B_1)$, and 
\begin{equation}\label{e:bound mu}
{\lambda\leq \mu(x)\leq \Lambda, \quad \mbox{for every } x\in B_1,}
\end{equation}
{where $\lambda,\,\Lambda$ are the ellipticity constants in (H2)}
(for a proof see \cite[Lemma 3.10]{FoGeSp15}). 

Here, for the sake of simplicity, we follow the computations in \cite{FoGeSp15}
which use Almgren original frequency function (cf. \cite{Alm00}) tailored for
Lipschitz coefficients. Let us define the functions
\begin{equation}\label{def EH}
\Eco(r):=\int_{B_{r}} \langle \mA\nabla u, \nabla u \rangle \d x \quad \mbox{ and }  \quad  \Hco(r):=\int_{\de B_r} \mu u^2 \d \cH^n,
\end{equation} 
and the {energy driven} frequency function
\begin{equation}\label{def I}
\Ico(r):=\frac{r\Eco(r)}{\Hco(r)}.
\end{equation}
It is useful for the sequel to observe that
\begin{equation}\label{e:Eco equiv}
\Eco(r)=\int_{\de B_r}u\langle\mA\nabla u,\nu\rangle\d \cH^n\,.
\end{equation}
This equality follows by computing the divergence of the vector field $u\mA\nabla u$, by taking into account 
\eqref{e:ob-pb local}, and by exploiting the Signorini's ambiguous conditions together with (H3).

In order to establish the monotonicity of $\Ico$ we start with the following lemma.

\begin{lemma}\label{TrA-(n+1)mu}
{Let $\mA$ satisfies (H1) with $p=\infty$, (H2), and $\mA(\underline 0)=\Id$,
let $\mu $ be as above}. Then, there exists a constant $C>0$ {depending on $n$ and on $[\mA]_{0,1}$} such that for every  $r\in (0,1)$ and  $x\in B_r$, we have that 
\[ 
\left| \Tr \mA(x)-(n+1)\mu(x)\right| \leq Cr.  
\]
\begin{proof}
Fixed a point $x\in B_r$, let $\{\lambda_i\}_{i=1}^{n+1}$, be the eigenvalues of the matrix $\mA(x)$ 
and $\left\lbrace  e_i \right\rbrace_{i=1}^{n+1}$ be the corresponding orthonormal base of eigenvectors. 
Set $y_i := r\,e_i$,
then,
\begin{equation*}
\begin{split}
|\Tr \mA(x)-&(n+1)\mu(x)|=\left|\sum_{i=1}^{n+1} (\lambda_i-\mu(x))\right|=\left|\sum_{i=1}^{n+1}\Big(\langle \mA(x) e_i,e_i\rangle- \langle \mA(x) \nu(x), \nu(x)\rangle\Big)\right|\\
&\leq \sum_{i=1}^{n+1}\left(\Big| \langle \mA(x) e_i,e_i\rangle-\langle \mA(y_i) e_i, e_i\rangle\Big| + \Big| \langle \mA(y_i) e_i, e_i\rangle- \langle \mA(x) \nu(x), \nu(x)\rangle \Big|\right) \\
&= \sum_{i=1}^{n+1}\left(\big| \langle \left(\mA(x) -\mA(y_i)\right) e_i,e_i\rangle \big| + \big| \mu(y_i)-\mu(x) \big|\right) \\
&\leq C \sum_{i=1}^{n+1} (|e_i|^2+ 1)|x-y_i| \leq Cr,
\end{split}
\end{equation*}
where we used the Lipschitz continuity of $\mA$ and $\mu$, and that $x,\, y_i\in \overline B_r$.
\end{proof}
\end{lemma}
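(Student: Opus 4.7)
The plan is to exploit the crucial normalization $\mA(\underline{0}) = \Id$: at the origin both quantities in the claimed inequality equal $n+1$, so the whole assertion reduces to a first-order Lipschitz estimate on each of them separately.

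First I would observe that $\mathrm{Tr}\,\mA(\underline{0}) = n+1$ and that $\mu(\underline{0}) = 1$ (the latter by the very definition of $\mu$ at $\underline{0}$, which is consistent with the limit of $\langle \mA(x)\nu(x),\nu(x)\rangle$ as $x\to\underline{0}$ because $\mA\to\Id$ and $|\nu|=1$). Therefore $\mathrm{Tr}\,\mA(\underline{0}) - (n+1)\mu(\underline{0}) = 0$, and the inequality amounts to an oscillation estimate around $\underline{0}$.

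Next, for $x \in B_r$, I would use the Lipschitz hypothesis (H1) with $p=\infty$ to get the matrix bound $\|\mA(x) - \Id\| = \|\mA(x) - \mA(\underline 0)\| \leq [\mA]_{0,1}|x| \leq [\mA]_{0,1}r$. This immediately controls each of the two pieces: on the one hand,
\[
\bigl|\mathrm{Tr}\,\mA(x) - (n+1)\bigr| = \bigl|\mathrm{Tr}(\mA(x) - \Id)\bigr| \leq (n+1)[\mA]_{0,1} r,
\]
and on the other hand, since $\nu(x)$ is a unit vector,
\[
\bigl|\mu(x) - 1\bigr| = \bigl|\langle (\mA(x) - \Id)\nu(x),\nu(x)\rangle\bigr| \leq [\mA]_{0,1} r.
\]
A triangle inequality then yields the desired bound with $C = 2(n+1)[\mA]_{0,1}$, i.e. a constant depending only on $n$ and $[\mA]_{0,1}$.

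I do not expect a serious obstacle here: the only subtlety is the apparent discontinuity of $\nu$ at $\underline{0}$, but this is harmless because the defect $\mu(x)-1$ is measured against $\mA(x) - \Id$, which vanishes at $\underline{0}$ at rate $|x|$, so the direction $\nu(x)$ never enters the estimate except as a unit vector. An alternative, slightly more intrinsic route would be to diagonalize $\mA(x)$ in an orthonormal eigenbasis $\{e_i\}$, compare $\langle \mA(x) e_i, e_i\rangle$ to $\langle \mA(r e_i) e_i, e_i\rangle = \mu(re_i)$, and then compare $\mu(re_i)$ to $\mu(x)$ using that $\mA$ is close to $\Id$ at both points; but this seems to repackage the same Lipschitz estimate and the direct argument above appears cleaner.
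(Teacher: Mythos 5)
Your proof is correct, and it is genuinely different from — and arguably cleaner than — the one in the paper. The paper diagonalizes $\mA(x)$ in its orthonormal eigenbasis $\{e_i\}$, writes $\Tr\mA(x)=\sum_i\langle\mA(x)e_i,e_i\rangle$, and compares each diagonal entry to $\mu(y_i)$ with $y_i=r\,e_i$ (using that $\nu(y_i)=e_i$ so that $\langle\mA(y_i)e_i,e_i\rangle=\mu(y_i)$), then to $\mu(x)$; this requires the Lipschitz continuity of $\mu$ itself, which is imported from an auxiliary lemma and which is precisely where the normalization $\mA(\underline 0)=\Id$ is hidden (without it, $\mu$ would be discontinuous at the origin). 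You instead anchor both $\Tr\mA$ and $(n+1)\mu$ at their common value $n+1$ at the origin, bound each separately by $\|\mA(x)-\Id\|\leq[\mA]_{0,1}|x|$, and observe that $\nu(x)$ enters only as a unit vector so its discontinuity at $\underline 0$ is irrelevant. This avoids the eigenbasis, avoids invoking Lipschitz continuity of $\mu$ as an independent fact, and makes the role of the normalization $\mA(\underline 0)=\Id$ transparent. The trade-off is minor: your argument is specifically tied to the origin as base point, whereas the paper's term-by-term comparison along the eigenbasis is formulated locally within $\overline B_r$ (with the dependence on the origin relegated to the Lipschitz-$\mu$ input), which matches the frequency-function framework that later translates the estimate to arbitrary base points via the affine change of variables of Section~5.
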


\begin{remark}
From Lemma \ref{TrA-(n+1)mu} we deduce that 
\[ 
\Tr \mA (x) -(n+1)\mu(x) \geq -Cr\,, 
\]
in turn implying for every $x\in B_1$
\begin{equation}\label{TrA/mu}
\mu^{-1}(x)\Tr \mA(x) \geq -Cr\mu^{-1}(x) + (n+1).
\end{equation}
\end{remark}

{First, we compute the derivative of $\Eco$.}
\begin{proposition}\label{E'}
{Let $u$ be a solution to \eqref{e:prob} under assumptions (H1)-(H3) with $p=\infty$, and 
$\mA(\underline 0)=\Id$, and let $\mu $ be as above. Then, there exists a constant $ C>0$ 
depending on $n$, $\lambda$, $\Lambda$, and $[\mA]_{0,1}$ such that
for $\mathcal L^1$-a.e. $r\in(0,1)$}
\begin{equation}
\Eco'(r)=2\int_{\de B_r} \mu^{-1} \langle  \mA\nu, \nabla u\rangle^2 \d \cH^n+ E_r,
\end{equation}
with
\[ 
E_r\geq -C\Eco(r) +\frac{n-1}{r}\Eco(r).
\]
\begin{proof}
By the coarea formula  and \cite[Lemma 3.4]{FoGeSp15} {applied to the Lipschitz vector field 
${\bf F}(x):=\frac{\mA(x)x}{r\mu(x)}$,} we have
\begin{equation}\label{derE}
\begin{split}
\Eco'(r) =&\int_{\de B_r} \langle \mA\nabla u, \nabla u \rangle \d \cH^n\\
 =& 2\int_{\de B_r} \mu^{-1} \langle  \mA\nu, \nabla u\rangle^2 \d \cH^n 
+\frac{1}{r}\int_{B_{r}}\mu^{-1} \nabla \mA \colon \mA x \otimes \nabla u \otimes \nabla u \d x\\ 
&+ \frac{1}{r}\int_{B_{r}} \langle \mA\nabla u, \nabla u\rangle \div \left(\mu^{-1}\mA x  \right) \d x
-\frac{2}{r}\int_{B_{r}} \langle  \mA\nabla u, \nabla^T \left( \mu^{-1}\mA x\right) \nabla u \rangle  \d x\\
 =:& 2\int_{\de B_r} \mu^{-1} \langle  \mA\nu, \nabla u\rangle^2 \d \cH^n +R_1 +R_2 +R_3.
\end{split}
\end{equation}
We now estimate the $R_i$'s. {We start with $R_1$. By using the Lipschitz continuity of $\mA$,
\eqref{e:bound mu} and (H2) we get}
\begin{align}\label{R1}
|R_1| 
&\leq \frac{1}{r}\int_{B_{r}} \sum_{i,j,k,l} \left| \mu^{-1}\partial_i a_{j,l} a_{i,k} x_k \; \partial_j u  \;\partial_l u \right| \d x\notag \\
&\leq \lambda^{-1} \int_{B_{r}}  \sum_{i,j,k,l} |\partial_i a_{j,l}|  | a_{i,k} \partial_j u \partial_l u| \d x 
\leq C \int_{B_{r}} \langle \mA \nabla u, \nabla u \rangle\d x=C\Eco(r).
\end{align}
By computing explicitly the divergence, $R_2$ rewrites as
\begin{align}\label{R2}
R_2&
=\frac{1}{r}\int_{B_{r}} \langle \mA\nabla u, \nabla u\rangle \sum_{i,j}^{n+1} \partial_i\left(\mu^{-1}a_{ij} x_j \right) \d x\notag\\
&=\frac{1}{r}\int_{B_{r}} \langle \mA\nabla u, \nabla u\rangle \sum_{i,j}^{n+1} \partial_i(
\mu^{-1}a_{ij}) x_j \d x + \frac{1}{r}\int_{B_{r}} \langle \mA\nabla u, \nabla u\rangle \mu^{-1}\Tr\mA \d x\notag\\
&\geq -C\int_{B_{r}} \langle \mA\nabla u, \nabla u\rangle \d x + 
\frac{1}{r}\int_{B_{r}}\langle \mA\nabla u, \nabla u\rangle \mu^{-1} \Tr\mA \d x\notag\\
&\geq -C\Eco(r)+\frac{n+1}{r}\Eco(r),
\end{align}
where we used the Lipschitz continuity of $\mu^{-1} \mA$, \MF{\eqref{e:bound mu}}, 
and \eqref{TrA/mu}. Analogously, for $ R_3 $ we have
\begin{align}\label{R3}
R_3 
& =-\frac{2}{r}\int_{B_{r}} \langle  \mA\nabla u, {(\nabla^T(\mu^{-1}\mA)x)\nabla u} \rangle  \d x-\frac{2}{r}\int_{B_{r}} \langle  \mA\nabla u, \mu^{-1}\mA \nabla u \rangle  \d x\notag\\
& \geq-C\int_{B_{r}} \langle  \mA\nabla u,\nabla u \rangle  \d x
-\frac{2}{r}\int_{B_{r}} \langle  \mA\nabla u,\left( \mu^{-1}\mA-\Id\right) \nabla u \rangle -\frac{2}{r}\int_{B_{r}}\langle  \mA\nabla u,\nabla u \rangle  \d x\notag \\
&\geq -C\Eco(r)-\frac{2}{r}\Eco(r),
\end{align}
where we used the Lipschitz continuity of $\mu^{-1} \mA$, (H2), \eqref{e:bound mu}, and $\mu^{-1}(\underline{0}) \mA(\underline{0})=\Id $.
Collecting \eqref{derE}-\eqref{R3} we conclude.
\end{proof}
\end{proposition}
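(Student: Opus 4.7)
The plan is to combine the coarea formula with a Rellich--Pohozaev type identity, tailored to the variable coefficient setting so that the boundary term on $\partial B_r$ produces the desired squared directional derivative. First, by the coarea formula, $\Eco'(r) = \int_{\partial B_r}\langle\mA\nabla u,\nabla u\rangle\,\d\cH^n$, so the issue is to rewrite this surface integral isolating the ``radial'' part $\mu^{-1}\langle\mA\nu,\nabla u\rangle^2$ plus controlled lower order contributions.

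Next I would test the Euler--Lagrange equation with $F\cdot\nabla u$ for the intrinsic vector field
\[
F(x) := \frac{\mA(x)\,x}{r\,\mu(x)},
\]
which satisfies $F\cdot\nu = 1$ on $\partial B_r$ because $\mu(x)=\langle\mA(x)\nu,\nu\rangle$ there. The key point is that integrating by parts $\int_{B_r}\div(\mA\nabla u)\,(F\cdot\nabla u)\,\d x = 0$ is legitimate thanks to $u\in H^2_\loc$ and the Signorini complementary condition together with (H3), which make the potential contributions on $B_1'$ vanish (here $F$ is tangent in the last coordinate near $B_1'$ precisely because of (H3)). Expanding the divergence theorem and using symmetry of $\mA$, the boundary term yields exactly
\[
2\int_{\partial B_r}(\mA\nabla u\cdot\nu)(F\cdot\nabla u)\,\d\cH^n = 2\int_{\partial B_r}\mu^{-1}\langle\mA\nu,\nabla u\rangle^2\,\d\cH^n,
\]
which is the main term in the statement. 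This is precisely the content of \cite[Lemma~3.4]{FoGeSp15} applied to the Lipschitz field $F$.

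The remaining interior terms split into three contributions: $R_1$ containing $\nabla\mA$, $R_2$ involving $\div F$, and $R_3$ involving $(\nabla F)^T$. For $R_1$, using the Lipschitz regularity of $\mA$ and coercivity (H2) one bounds $|R_1|\leq C\Eco(r)$. For $R_2$, one computes $\div F = \mu^{-1}\Tr\mA/r + O(1)$ where the $O(1)$ piece again uses Lipschitz regularity of $\mu^{-1}\mA$; invoking Lemma~\ref{TrA-(n+1)mu} and \eqref{TrA/mu} then gives $R_2 \geq \frac{n+1}{r}\Eco(r) - C\Eco(r)$. For $R_3$, the crucial cancellation comes from $\mA(\underline 0)=\Id$ and $\mu(\underline 0)=1$: writing $\mu^{-1}\mA = \Id + (\mu^{-1}\mA - \Id)$ and using the Lipschitz bound $|\mu^{-1}\mA-\Id|\leq Cr$ on $B_r$ one obtains
\[
R_3 \geq -\frac{2}{r}\Eco(r) - C\Eco(r).
\]
Summing the three gives $E_r = R_1 + R_2 + R_3 \geq \frac{n-1}{r}\Eco(r) - C\Eco(r)$, as claimed.

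The main technical obstacle is handling the $(\nabla F)^T$ term $R_3$: a naive bound produces a $-\frac{2}{r}\Eco(r)$ contribution that would ruin the monotonicity, and it is only the specific structural identity $\mA(\underline 0)=\Id$, combined with the Lipschitz normalization of $\mu^{-1}\mA$, that allows us to extract the clean constant $\frac{n-1}{r}$ after cancellation with the $\frac{n+1}{r}$ coming from $R_2$. A secondary technical point worth flagging is the justification that the Pohozaev test with $F\cdot\nabla u$ produces no singular contribution on the thin set $B_1'$, which is ensured by $(\mA\nabla u\cdot e_{n+1})\,\nabla u\cdot(F) = 0$ on $B_1'$ via the Signorini conditions and (H3).
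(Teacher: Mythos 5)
Your proposal is correct and follows essentially the same route as the paper: the coarea formula for $\Eco'(r)$, a Rellich--Pohozaev identity (Lemma~3.4 in \cite{FoGeSp15}) applied to the Lipschitz field ${\bf F}(x)=\mA(x)x/(r\mu(x))$ to isolate the squared radial term, the split into $R_1,R_2,R_3$, and the crucial cancellation $\tfrac{n+1}{r}-\tfrac{2}{r}=\tfrac{n-1}{r}$ enabled by $\mA(\underline 0)=\Id$ and $\mu(\underline 0)=1$. You also correctly flag the role of (H3) and the Signorini complementary condition in killing contributions from $B_1'$ in the integration by parts.
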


We now focus on the derivative of $ \Hco $.

\begin{proposition}\label{H'}
{Let $u$ be a solution to \eqref{e:prob} under assumptions (H1)-(H3) with $p=\infty$, and 
$\mA(\underline 0)=\Id$, and let $\mu $ be as above. Then, there exists a constant $ C>0$ 
depending on $n$, $\lambda$, $\Lambda$, and $[\mA]_{0,1}$ such that
for $\mathcal L^1$-a.e. $r\in(0,1)$}
\begin{equation}
\Hco'(r)=\frac{n}{r}\Hco(r)+2\int_{\de B_r} u\langle \mA \nabla u, \nu\rangle \d \cH^n + H_r,
\end{equation}
with 
\[ 
|H_r|\leq C\Hco(r). 
\]

\begin{proof}
First note that by the definition of $\mu$, $\nu$, the divergence theorem implies
\[ 
\Hco(r)=\frac{1}{r}\int_{B_{r}} \div (u^2\mA x) \d x. 
\]
Thus, the coarea formula {and Lemma \ref{TrA-(n+1)mu} yield for $\cL^1$-a.e. $r\in(0,1)$}
\begin{equation}\label{derH}
\begin{split}
\Hco'(r)&= -\frac{1}{r}\Hco(r) + \frac{1}{r}\int_{\de B_r} \div \left( u^2 \mA x\right) \d \cH^n\\
&= -\frac{1}{r}\Hco(r) + \frac{2}{r}\int_{\de B_r} u \langle \mA x, \nabla u \rangle \d \cH^n 
+ \frac{1}{r}\int_{\de B_r} u^2 \Big( \sum_{i,j}^{n+1}\partial_i a_{i,j} x_j + \Tr\mA\Big) \d \cH^n\\
&=\frac{n}{r}\Hco+2\int_{\de B_r} u \langle \mA \nu , \nabla u \rangle  \d \cH^n + \frac{1}{r}\int_{\de B_r} u^2 \sum_{i,j}^{n+1}\partial_i a_{i,j} x_j \d \cH^n\\
&+\frac{1}{r}\int_{\de B_r} u^2\left( \Tr\mA-(n+1)\mu \right) \d \cH^n.
\end{split}
\end{equation}
We now estimate the last two summands. {Thanks to the Lipschitz continuity of $\mA$ and 
\eqref{e:bound mu} we have}
\begin{equation}
\left| \frac{1}{r}\int_{\de B_r} u^2 \sum_{i,j}^{n+1}\partial_i a_{i,j} x_j \d \cH^n\right| 
\leq C\int_{\de B_r} u^2 \d \cH^n \leq C\Hco(r)\,.
\end{equation}
Moreover, using Lemma \ref{TrA-(n+1)mu} we have that 
\begin{equation}
\left| \frac{1}{r}\int_{\de B_r} u^2\left( \Tr \mA-(n+1)\mu \right) \d \cH^n\right| 
\leq C\int_{\de B_r} u^2 \d \cH^n \leq C\Hco(r)\,.
\end{equation}
The conclusion then follows at once.
\end{proof}
\end{proposition}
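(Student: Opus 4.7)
The plan is to reduce $\Hco(r)$ to a volume integral via the divergence theorem, differentiate via the coarea formula, and then estimate the error terms by invoking Lemma \ref{TrA-(n+1)mu} and the Lipschitz continuity of $\mA$, in close analogy with the treatment of $\Eco'(r)$ in Proposition \ref{E'}.

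Concretely, since $\nu(x)=x/|x|$ on $\partial B_r$, we have $\langle\mA x,\nu\rangle=r\mu$ there, and therefore $r\Hco(r)=\int_{\partial B_r}u^2\langle\mA x,\nu\rangle\d\cH^n$. The divergence theorem rewrites this as a volume integral $r\Hco(r)=\int_{B_r}\div(u^2\mA x)\d x$, and differentiating in $r$ via the coarea formula gives
\begin{equation*}
\Hco'(r)=-\frac{1}{r}\Hco(r)+\frac{1}{r}\int_{\partial B_r}\div(u^2\mA x)\d\cH^n.
\end{equation*}
Expanding the divergence pointwise as $\div(u^2\mA x)=2u\langle\mA x,\nabla u\rangle+u^2\sum_{i,j}\partial_i a_{ij}x_j+u^2\Tr\mA$, and using $x=r\nu$ on $\partial B_r$ in the first summand, I would split $\Hco'(r)$ into the main gradient contribution $2\int_{\partial B_r}u\langle\mA\nu,\nabla u\rangle\d\cH^n$, a derivative-of-$\mA$ term $\frac{1}{r}\int_{\partial B_r}u^2\sum_{i,j}\partial_i a_{ij}x_j\d\cH^n$, and the trace term $\frac{1}{r}\int_{\partial B_r}u^2\Tr\mA\d\cH^n$.

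The derivative-of-$\mA$ term is controlled directly by $[\mA]_{0,1}$ together with $|x_j|\leq r$ on $\partial B_r$, producing a bound $C\int_{\partial B_r}u^2\d\cH^n\leq C\Hco(r)$ after using the lower bound $\mu\geq\lambda$ from \eqref{e:bound mu}. For the trace term, I would use the decomposition $\Tr\mA=(n+1)\mu+(\Tr\mA-(n+1)\mu)$: the principal piece $(n+1)\mu u^2$ integrates to exactly $\frac{n+1}{r}\Hco(r)$, while Lemma \ref{TrA-(n+1)mu} bounds the difference pointwise by $Cr$, whose contribution is absorbed in $C\Hco(r)$. Combining everything and noting that $\frac{n+1}{r}\Hco(r)-\frac{1}{r}\Hco(r)=\frac{n}{r}\Hco(r)$ produces the stated identity with $|H_r|\leq C\Hco(r)$.

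The one subtle step, and the reason Lemma \ref{TrA-(n+1)mu} is required, is the handling of the trace term: a naive differentiation leaves $\frac{\Tr\mA(x)}{r}u^2$ pointwise on $\partial B_r$, which does not by itself match the desired dimensional factor $\frac{n}{r}\Hco(r)$. The crucial cancellation is that $\Tr\mA(x)\approx(n+1)\mu(x)$ up to a Lipschitz error of order $r$, which is precisely what allows the scalar $\mu$ to be factored out on $\partial B_r$, reproducing $\Hco(r)$ and generating the correct exponent $n$ in the leading term. Once this structural identity is in place, all remaining estimates are routine applications of the Lipschitz continuity of $\mA$ and $\mu$ and the uniform bounds on $\mu$.
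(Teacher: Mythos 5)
Your proof is correct and follows essentially the same route as the paper: rewrite $r\Hco(r)$ as a bulk integral of $\div(u^2\mA x)$, differentiate by coarea, expand the divergence, and then use Lemma \ref{TrA-(n+1)mu} to replace $\Tr\mA$ by $(n+1)\mu$ up to an $O(r)$ error so that the scalar $\mu$ can be factored out to reconstruct $\Hco(r)$ and produce the correct factor $n/r$. The two remaining error terms are handled exactly as in the paper, via the Lipschitz bound on $\mA$ and the lower bound $\mu\geq\lambda$.
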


We now prove the quasi monotonicity of $ \Ico $.

\begin{proposition}\label{I'}
{Let $\mA$ satisfies (H1) with $p=\infty$, (H2), (H3), and $\mA(\underline 0)=\Id$,
let $\mu $ be as above. Then, there exists a constant $ C>0$ 
depending on $n$, $\lambda$, $\Lambda$, and $[\mA]_{0,1}$}
such that the function
\[ 
(0,1]\ni r \mapsto e^{Cr} \Ico(r) 
\]
is non-decreasing, where we recall that $\Ico(r)=\frac{r\Eco(r)}{\Hco(r)}$.
\end{proposition}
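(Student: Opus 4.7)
The plan is to compute the logarithmic derivative of $\Ico$ and combine the two preceding propositions, using a weighted Cauchy--Schwarz inequality to absorb the ``good'' positive term against the cross term that appears.

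First, I would write
\[
\frac{\Ico'(r)}{\Ico(r)} = \frac{1}{r} + \frac{\Eco'(r)}{\Eco(r)} - \frac{\Hco'(r)}{\Hco(r)}
\]
and substitute the expressions from Propositions \ref{E'} and \ref{H'}. The crucial observation is that formula \eqref{e:Eco equiv} identifies the boundary cross term appearing in $\Hco'$, namely $\int_{\de B_r} u\langle \mA\nabla u,\nu\rangle\,\d\cH^n$, with $\Eco(r)$ itself. Hence, after substitution,
\[
\frac{\Ico'(r)}{\Ico(r)} = \frac{1-n}{r} + \frac{E_r}{\Eco(r)} - \frac{H_r}{\Hco(r)} + \frac{2\int_{\de B_r}\mu^{-1}\langle\mA\nu,\nabla u\rangle^2\,\d\cH^n}{\Eco(r)} - \frac{2\Eco(r)}{\Hco(r)}.
\]
The bound $E_r \geq -C\Eco(r) + \frac{n-1}{r}\Eco(r)$ makes the combination $\frac{1-n}{r}+\frac{E_r}{\Eco(r)}\geq -C$, while $|H_r|\leq C\Hco(r)$ controls the other error, so the first three terms contribute at least $-C$.

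The heart of the proof is then to show that the remaining piece is nonnegative. Using the symmetry of $\mA$, so $\langle\mA\nu,\nabla u\rangle = \langle\mA\nabla u,\nu\rangle$, and writing $u\langle\mA\nabla u,\nu\rangle = (u\,\mu^{\sfrac12})\cdot(\mu^{-\sfrac12}\langle\mA\nabla u,\nu\rangle)$, the Cauchy--Schwarz inequality on $\de B_r$ yields
\[
\Eco(r)^2 = \Bigl(\int_{\de B_r} u\langle\mA\nabla u,\nu\rangle\,\d\cH^n\Bigr)^2 \leq \Hco(r)\int_{\de B_r}\mu^{-1}\langle\mA\nu,\nabla u\rangle^2\,\d\cH^n,
\]
where we have recognized $\Hco(r) = \int_{\de B_r}\mu u^2\,\d\cH^n$. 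Dividing by $\Eco(r)\Hco(r)$ gives exactly
\[
\frac{2\int_{\de B_r}\mu^{-1}\langle\mA\nu,\nabla u\rangle^2\,\d\cH^n}{\Eco(r)} \geq \frac{2\Eco(r)}{\Hco(r)},
\]
so the last two terms cancel with a favorable sign. Collecting everything gives $(\log\Ico)'(r)\geq -C$, whence $(e^{Cr}\Ico(r))'\geq 0$, which is the claimed monotonicity.

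The only mildly delicate point is verifying that $\Eco(r)>0$ and $\Hco(r)>0$ along the way (so the logarithmic derivative makes sense), but this follows as in the frequency function of Section~\ref{s:frequency}: $x_0$ being a free boundary point forces $u\not\equiv 0$ on $\de B_r$ and, together with non-triviality of the solution, also gives $\Eco(r)>0$. All error constants can be tracked to depend only on $n$, $\lambda$, $\Lambda$, and $[\mA]_{0,1}$, matching the statement.
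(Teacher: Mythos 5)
Your proposal is correct and follows essentially the same route as the paper: both substitute the expressions for $\Eco'$ and $\Hco'$ from Propositions~\ref{E'} and \ref{H'}, use \eqref{e:Eco equiv} to identify the cross term in $\Hco'$ with $\Eco(r)$, invoke the same weighted Cauchy--Schwarz inequality on $\de B_r$ to kill the sign-indefinite piece, and then absorb the remaining errors via the bounds on $E_r$ and $H_r$. The only superficial difference is that you compute the logarithmic derivative $\Ico'/\Ico$, whereas the paper expands $\Ico'$ directly by the quotient rule; the algebra and the key inequality are identical.
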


\begin{proof}
Propositions \ref{E'} and \ref{H'}, formula \eqref{e:Eco equiv}, and the Cauchy-Schwarz inequality give
{for $\cL^1$-a.e. $r\in(0,1)$}
\begin{equation*}
\begin{split}
\Ico'(r)&=\frac{\d}{\d r}\left(\frac{r\Eco(r)}{\Hco(r)}\right)=\frac{\Ico(r)}{r}+r\frac{\Eco'(r)\Hco(r)-\Eco(r)\Hco'(r)}{\Hco^2(r)}\\
&=\frac{\Ico(r)}{r}+\frac{r}{\Hco^2(r)}\left( 2\Hco(r)\int_{\de B_r} \mu^{-1}\langle\mA\nu, \nabla u\rangle^2 \d \cH^n 
- 2\left(\int_{\de B_r} u\langle \mA \nabla u, \nu\rangle \d \cH^n  \right)^2 \right)\\
&\quad +\frac{r}{\Hco^2(r)} \left( E_r \Hco(r) -\frac{n}{r} \Hco(r)\Eco(r) -H_r\Eco(r) \right)\\
&\geq \frac{\Ico(r)}{r}+\frac{r}{\Hco^2(r)}\left ( E_r \Hco(r) -\frac{n}{r} \Hco(r)\Eco(r) -H_r\Eco(r) \right)\\
&\geq \frac{\Ico(r)}{r}+\frac{r}{\Hco^2(r)} ( -C\Eco(r)\Hco(r)-\frac{1}{r} \Hco(r)\Eco(r))=-C \Ico(r)\,.
\end{split}
\end{equation*}
The conclusion then follows at once.
\end{proof}

The quasi-monotonicity of $\Ico$ is exploited in what follows to show the finiteness of the intrinsic frequency $N_u$. To this aim we will also need the following auxiliary result.
	
\begin{lemma}\label{MonH}
{Let $u$ be a solution to \eqref{e:prob} under assumptions (H1)-(H3) with $p=\infty$, and 
$\mA(\underline 0)=\Id$.} Then there exists $\beta>0$ such that
\[
\frac{\Hco(t)}{t^\beta}e^{Ct}\leq \frac{\Hco(r)}{r^\beta}e^{Cr}\qquad \forall\;0<r<t<1.
\]
\end{lemma}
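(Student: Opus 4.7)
The plan is to derive the result as a direct differential-inequality consequence of Propositions~\ref{H'} and \ref{I'}. Since $\Hco(r)>0$ for all $r\in(0,1)$ (as $\underline 0\in\Gamma(u)$ and $u\not\equiv 0$ in any neighbourhood, by the same unique continuation argument used in Section~\ref{s:frequency} to justify $H_u(x_0,r)>0$), all quantities below are well defined.

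First I would combine Proposition~\ref{H'} with the identity \eqref{e:Eco equiv}, which gives $\Eco(r)=\int_{\partial B_r}u\langle\mA\nabla u,\nu\rangle\,\d\cH^n$, to obtain
\[
\frac{\Hco'(r)}{\Hco(r)} = \frac{n+2\Ico(r)}{r} + \frac{H_r}{\Hco(r)},\qquad \Bigl|\frac{H_r}{\Hco(r)}\Bigr|\leq C,
\]
for $\cL^1$-a.e.\ $r\in(0,1)$. Next, by Proposition~\ref{I'}, the map $r\mapsto e^{Cr}\Ico(r)$ is non-decreasing on $(0,1]$, hence
\[
\Ico(r)\leq e^{C(1-r)}\,\Ico(1)\leq e^{C}\,\Ico(1)=:M<\infty,\qquad\forall\,r\in(0,1].
\]
Choose $\beta:=n+2M+2C$. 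A direct computation then gives
\[
\frac{\d}{\d r}\ln\Bigl(\frac{\Hco(r)}{r^{\beta}}e^{Cr}\Bigr)
=\frac{n+2\Ico(r)-\beta}{r}+\frac{H_r}{\Hco(r)}+C
\leq -\frac{2C}{r}+2C\leq 0\qquad\text{for }r\in(0,1).
\]

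Integrating this inequality between $r$ and $t$ with $0<r<t<1$ yields
\[
\ln\Bigl(\frac{\Hco(t)}{t^{\beta}}e^{Ct}\Bigr)\leq \ln\Bigl(\frac{\Hco(r)}{r^{\beta}}e^{Cr}\Bigr),
\]
which is exactly the claim. There is no substantive obstacle here: the only care needed is to ensure $\Hco(r)>0$ throughout $(0,1)$ so that the log-derivative computation is legitimate, and to observe that the quasi-monotonicity of $\Ico$ supplies the global upper bound $M$ needed to choose $\beta$ large enough (hence $\beta$ depends on $u$ through $\Ico(1)$, which is consistent with the statement asserting mere existence of $\beta$).
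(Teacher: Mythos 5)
Your proof is correct and follows essentially the same route as the paper: combine Proposition~\ref{H'} with \eqref{e:Eco equiv} to bound $\Hco'/\Hco$, use the quasi-monotonicity of $\Ico$ from Proposition~\ref{I'} to get a uniform bound $\Ico(r)\leq C\,\Ico(1)$, and integrate the resulting logarithmic differential inequality. The only (minor, beneficial) deviation is that you choose $\beta$ slightly larger so that the derivative of $\ln\bigl(r^{-\beta}\Hco(r)e^{Cr}\bigr)$ is nonpositive, which yields exactly the stated form of the inequality, whereas the paper sets $\beta=n+C\Ico(1)$ and obtains $\frac{\d}{\d r}\ln(r^{-\beta}\Hco(r))\leq C$, which upon integration gives the same monotone quantity only after enlarging $\beta$ (or flipping the sign in the exponential weight); your choice streamlines this step.
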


\begin{proof}
From Proposition \ref{H'} and \eqref{e:Eco equiv} we have for $\cL^1$-a.e. $r\in(0,1)$
\begin{equation*}
\Hco'(r)\leq \frac{n}{r}\Hco(r)+2\Eco(r) +C\Hco(r)\,,
\end{equation*}
so that
\begin{equation*}
\frac{\Hco'(r)}{\Hco(r)}\leq \frac{n}{r} +2\frac{\Ico(r)}{r} +C\leq \frac{n+C\Ico(1)}{r}+C,
\end{equation*}
from which we obtain
\begin{equation}
\frac{\d}{\d r}\left(\ln \left(\frac{\Hco(r)}{r^\beta}\right)\right)\leq C
\end{equation}
where $ \beta= n+C\Ico(1)$.
The conclusion then follows by a simple integration.
\end{proof}

We can finally prove the finiteness of $N_u(x_0,0^+)$ for every point $x_0$ in $\Gamma(u)$.

\begin{proposition}\label{I limitata}
{Let $u$ be a solution to \eqref{e:prob} under assumptions (H1)-(H3) with $p=\infty$.} 
Then, $\Gamma^{\textup{finite}}(u) = \Gamma(u)$, i.e. ${N_u}(x_0,0^+)<\infty$ for every 
$x_0\in \Gamma(u)$.
\end{proposition}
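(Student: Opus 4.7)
The plan is to reduce to the setup of Section~\ref{s:A lipschitz} via the intrinsic change of variables from Section~\ref{s.intrinsic frequency}, and then to extract the bound on $N_u(x_0,r)$ from quasi-monotonicity of the sphere-based frequency $\mathscr{I}$. Fix $x_0\in\Gamma(u)$ and set $v:=u_{\mA(x_0)}$; after translating, assume $x_0=\underline 0$. By construction $v$ solves a thin obstacle problem for the matrix field $\mathbb{C}_{x_0}(x)=\mA^{-\sfrac12}(x_0)\mA(\Phi_{x_0}(x))\mA^{-\sfrac12}(x_0)$, which is Lipschitz with $\mathbb{C}_{x_0}(\underline 0)=\mathrm{Id}$ and whose ellipticity and Lipschitz constants are controlled by those of $\mA$. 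The results of Section~\ref{s:A lipschitz} then apply to $v$; in particular, Proposition~\ref{I'} yields that $r\mapsto e^{Cr}\mathscr{I}_v(r)$ is non-decreasing on $(0,1]$.

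I would next check that $\mathscr{I}_v(\rho_0)<\infty$ for some $\rho_0>0$. Since $\underline 0\in\Gamma(v)$, $v$ is not identically zero on any neighborhood of $\underline 0$, so $\|v\|_{L^2(B_\rho)}>0$ for every $\rho>0$; by continuity of $r\mapsto\mathscr{H}_v(r)$ (a consequence of Theorem~\ref{t:reg}) and the coarea formula, this forces $\mathscr{H}_v(\rho_0)>0$ for some small $\rho_0$. Combined with $\mathscr{E}_v(\rho_0)<\infty$ from the same regularity, quasi-monotonicity then yields
\[
\mathscr{I}_v(r)\ \le\ e^{C\rho_0}\mathscr{I}_v(\rho_0)\ =:\ M\qquad\forall\,r\in(0,\rho_0].
\]

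The core step is to deduce from this a bound on the $\phi$-weighted intrinsic frequency $N_u(x_0,r)=I_v(\underline 0,r)=rD_v(\underline 0,r)/H_v(\underline 0,r)$. The numerator is straightforward: since $\phi\le 1$ with $\mathrm{supp}\,\phi(|\cdot|/r)\subset\overline{B_r}$, uniform ellipticity of $\mathbb{C}_{x_0}$ gives $D_v(\underline 0,r)\le C\mathscr{E}_v(r)$. For the denominator, the coarea formula together with $\int_{\partial B_s}v^2\ge\Lambda^{-1}\mathscr{H}_v(s)$ yields
\[
H_v(\underline 0,r)\ \ge\ \Lambda^{-1}\int_{r/2}^{r}\Big(\!-\tfrac{\phi'(s/r)}{s}\Big)\mathscr{H}_v(s)\,ds.
\]
Applying Lemma~\ref{MonH} to $v$ (in the form $\mathscr{H}_v(s)\ge (s/r)^\beta\mathscr{H}_v(r)\ge 2^{-\beta}\mathscr{H}_v(r)$ for $s\in[r/2,r]$) together with $\int_{1/2}^{1}(-\phi'(t)/t)\,dt\ge\phi(\sfrac12)-\phi(1)=1$, one obtains $H_v(\underline 0,r)\ge c\,\mathscr{H}_v(r)$ with $c=c(n,\lambda,\Lambda,[\mA]_{0,1})>0$. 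Combining the two estimates,
\[
N_u(x_0,r)\ \le\ C\,\mathscr{I}_v(r)\ \le\ CM\qquad\forall\,r\in(0,\rho_0],
\]
which proves $N_u(x_0,0^+)<\infty$.

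The only delicate point I foresee is the lower bound $H_v(\underline 0,r)\gtrsim\mathscr{H}_v(r)$; this is precisely why Lemma~\ref{MonH} was formulated as quasi-monotonicity of $\mathscr{H}_v/r^\beta$ rather than of $\mathscr{H}_v$ itself, so that the upper bound on $\mathscr{H}_v(r)$ in terms of $\mathscr{H}_v(s)$ for $s\in[r/2,r]$ is available for free. Everything else is essentially bookkeeping to check the uniform dependence of constants in the change of variables $\Phi_{x_0}$.
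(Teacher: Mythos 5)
Your proposal is correct and follows essentially the same route as the paper's proof: reduce to the case $\mathbb{C}_{x_0}(\underline 0)=\mathrm{Id}$ via the intrinsic change of variables, invoke the quasi-monotonicity of the sphere-based frequency $\Ico$ from Proposition~\ref{I'}, and then bound $D_v\lesssim\Eco_v$ directly and $H_v\gtrsim\Hco_v$ via the coarea formula together with Lemma~\ref{MonH}. The only minor difference is that you spell out why $\Ico_v$ is finite at some positive initial scale (coarea plus nontriviality of $v$ near $\underline 0$), a point the paper leaves implicit when writing $\Ico(r)\leq C\Ico(1)<\infty$; the substance and the key estimates are identical.
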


\begin{proof}
Without loss of generality we verify the finiteness of the frequency in $x_0=\underline 0\in \Gamma(u)$. Moreover, by the arguments in \S\ref{s.intrinsic frequency} it is enough to consider the case $\mA(\underline 0)= \Id$: indeed, the intrinsic frequency function is defined after change of coordinates $\Phi_{x_0}$ which sets the matrix $\mathbb{A}(0)$ to be the identity.
In the sequel we use the convention adopted throughout the paper to drop the base point being equal to the origin.

We begin by estimating $ H_u(r) $ and $ D_u(r) $ in terms of $ \Hco(r) $ and $ \Eco(r) $, respectively.
Let us begin with $ H_u(r) $: from Lemma \ref{MonH} we get
\begin{align}\label{H>Hco}
H_u(r)&=\int -\phi'\Big({\textstyle{\frac{|x|}{r}}}\Big)\frac{u^2}{|x|}\d x= \int_{\sfrac r2}^{r} \int_{\de B_s}-\phi'\Big({\textstyle{\frac{|x|}{r}}}\Big)\frac{u^2}{s}\;  \d \cH^n\d s \notag\\
& \geq \int_{\sfrac r2}^{r} -\phi'\Big({\textstyle{\frac{s}{r}}}\Big) \frac{1}{\Lambda s} \int_{\de B_s} u^2 \mu \;\d \cH^n \d s = \int_{\sfrac r2}^{r} -\phi'\Big({\textstyle{\frac{s}{r}}}\Big) \frac{1}{\Lambda s}\Hco(s) \;\d s \notag\\
&\geq \int_{\sfrac r2}^{r} -\phi'\Big({\textstyle{\frac{|x|}{r}}}\Big) \frac{1}{\Lambda s}\Hco(r) e^{C(r-s)} \frac{s^\beta}{r^\beta} \;\d s\notag\\
&\geq C \Hco(r) \int_{\sfrac r2}^{r} -\phi'\Big({\textstyle{\frac{|x|}{r}}}\Big) \frac{1}{s} \d s\geq  C \Hco(r).
\end{align}
Instead, for $ \Eco $ and $ D_u $ we have
\begin{equation}\label{D<Eco}
D_u(r)=\int_{B_{r}} \phi\Big({\textstyle{\frac{|x|}{r}}}\Big) |\nabla u|^2 \d x
\leq \int_{B_r}|\nabla u|^2 \d x \leq 
\lambda^{-1}\Eco(r).
\end{equation}
Thus, from \eqref{H>Hco}, \eqref{D<Eco} and the definition of the frequency $ I_u(r) $ we conclude 
by taking into account Proposition \ref{I'}
\[ 
{N_u(r)=}I_u(r)=\frac{rD_u(r)}{H_u(r)}\leq C\frac{r\Eco(r)}{\Hco(r)}=C\Ico(r)\leq C\Ico(1)<\infty. \qedhere
\]
We then conclude that all points of the free boundary have finite frequency, i.e.
$\Gamma(u) = \Gamma^{\textup{finite}}(u)$.
\end{proof}

We are then in the position to prove Theorem \ref{t:lip}.

\begin{proof}[Proof of Theorem \ref{t:lip}]
The rectifiability of the free boundary is a consequence of Theorem \ref{t:main} and the previous Proposition \ref{I limitata} which establishes that all free boundary points belongs to $\Gamma^{\textup{finite}}(u)$.

In order to deduce the local finiteness of the Minkowski content of $\Gamma(u)$, we observe that the intrinsic frequency is locally bounded in $B_{\frac12}'\cap \Gamma(u)$:
i.e., there exists $\mathfrak{m}_0>0$ such that
\[
B_{\frac12}'\cap \Gamma(u) \subset \Gamma^{\mathfrak{m}_0}(u).
\]
Indeed, we have that $N_u(x, r)\leq C\,\Ico(x, r)\leq C\Ico(x,\sfrac12)$ for every $r\in (0,\frac12]$; tanking into account the continuity of $\overline B_{\sfrac12}'\cap\Gamma(u)
\ni x\to \Ico(x,\sfrac12)$, we infer that $N_u(x, r)$ is bounded in $\overline B_{\sfrac12}'\cap \Gamma(u)$ for every $r\in (0,\sfrac12]$.

By simple covering ad scaling arguments, the conclusion of Theorem \ref{t:lip} is shown for every compact $K\subset\subset B_1'$.
\end{proof}

\subsection{Free boundary of nonlinear thin obstacle problems}\label{ss:nonlinear}
The results proven above can be applied to the case of nonlinear thin obstacle problems studied in \cite{AAS24}, i.e. to the class of problems
\begin{equation}\label{min}
 \min_{u\in \mathcal{A}} \; \int_{B_{1}^+} f\left(\nabla u\right) \d x ,
\end{equation}
where the energy density $f:\R^{n+1}\to \R$ is convex and is of the form
\begin{equation}\label{formula-f}
f(p)=h(|p|)
\end{equation}
for every $p \in \mathbb{R}^{n+1}$, and the matrix $D_p^2 f(p)$ is uniformly coercive
on compact subsets, i.e. fulfills the following local ellipticity condition: for every
$M>0$ there exists $\lambda=\lambda(M)>0$ such that
\begin{equation}\label{ellipticity}
\langle D_p^2 f(p)\xi, \xi\rangle \geq \lambda |\xi|^2
\end{equation}
for every $|p|\leq M$ and  $\xi\in \mathbb{R}^{n+1}$.

As shown in \cite{AAS24}, if the function {$h\in C^2(\R)$} satisfies
\begin{equation*}
h(0)=h'(0)=0, \quad  h''(t) = 1 + O(t) \qquad\textup{for } t\to 0^+,
\end{equation*}
then the solutions to the variational problem in \eqref{min} are
$C^{1, \sfrac12}_{\textup{loc}}(B_1^+\cup B'_1)$.
Here we show that, if in addition
\begin{equation}\label{e.h''}
h''(t) = 1 + O(t^2) \qquad\textup{for } t\to 0^+,
\end{equation}
and, for the sake of simplicity, $h\in C^\infty$,
then we may apply Theorem~\ref{t:lip} to infer all the results on
the free boundary regularity in that statement.

\begin{proposition}
Let {$u\in W^{1,\infty}_{\loc}(B_1^+)$} be a solution to \eqref{min} in $\mathcal{A}$ with $h\in C^\infty$ under the assumptions \eqref{formula-f}, \eqref{ellipticity} and \eqref{e.h''},
then $u\in C^{1, \sfrac12}_{\textup{loc}}(B_1^+\cup B'_1)$, and the free boundary $\Gamma(u)$
is $(n-1)$-rectifiable and its Minkowski content is locally finite, i.e. for every $K\subset\subset B_1'$ there exists a constant $C(K)>0$ such that
\[
\mathcal{L}^{n+1}\big(\mathcal{T}_r(\Gamma(u)\cap K)\big) \leq C(K) r^2,
\]
for every $r\in (0,1)$.

Moreover, there exists a set $\Sigma(u)\subset \Gamma(u)$
with Hausdorff dimension at most $n-2$ such that for every
$x\in\Gamma(u)\setminus\Sigma(u)$
\[
N_u(x,0^+)\in\{2m,2m-\sfrac12,2m+1\}_{m\in\mathbb N\setminus\{0\}}\,.
\]
\end{proposition}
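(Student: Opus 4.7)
The plan is to linearize the nonlinear equation, verify (H1)-(H3) for the resulting matrix field (this is the core of the argument), and then invoke Theorem~\ref{t:lip}. The approach mirrors the one used for the area functional in \cite{FS20}.

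By the regularity result of \cite{AAS24}, $u \in C^{1,\sfrac12}_{\loc}(B_1^+\cup B_1')$; after even reflection across $\{x_{n+1}=0\}$ we regard $u$ as defined on $B_1$. Since $f$ is rotationally symmetric, $D_p f(0) = 0$, and therefore
\[
D_p f(\nabla u(x)) \,=\, \mathbb{A}(x)\, \nabla u(x),\qquad \mathbb{A}(x) := \int_0^1 D^2_p f\big(t\,\nabla u(x)\big)\,dt,
\]
so that $u$ satisfies $\operatorname{div}(\mathbb{A}(x)\nabla u) = 0$ in $B_1^+$ together with the Signorini complementary conditions on $B_1'$. The matrix $\mathbb{A}$ is symmetric, and the ellipticity (H2) follows from \eqref{ellipticity} and the $L^\infty$ bound on $\nabla u$. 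Hypothesis (H3) is a consequence of the rotational symmetry of $f$ combined with $\partial_{n+1} u(x',0) = 0$ on $B_1'$ (which holds for the thin obstacle problem via the Signorini conditions on the non-contact set and $C^1$-matching of the even extension on the contact set): the explicit formula $\partial_{ij}f(p) = h''(|p|) p_ip_j/|p|^2 + (h'(|p|)/|p|)(\delta_{ij} - p_ip_j/|p|^2)$ gives $\partial_{i,n+1}f(p) = 0$ whenever $p_{n+1}= 0$, whence $\mathbb{A}_{i,n+1}(x',0) = 0$ for $i\leq n$.

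The main point, and where \eqref{e.h''} enters crucially, is (H1) with $p = \infty$, i.e.\ the Lipschitz regularity of $\mathbb{A}$. The condition $h''(t) = 1 + O(t^2)$ yields $D^2_p f(p) = \operatorname{Id} + O(|p|^2)$ and consequently $D^3_p f(p) = O(|p|)$. Since $\nabla u \equiv 0$ on $\Gamma(u)$ (the tangential derivatives vanish on the contact set, the normal one by even symmetry), the $C^{1,\sfrac12}$ regularity yields
\[
|\nabla u(x)| \,\leq\, C\,d(x)^{\sfrac12},\qquad d(x) := \dist(x,\Gamma(u)).
\]
Interior Schauder theory for $\operatorname{div}(\mathbb{A}\nabla u) = 0$ on the ball $B_{d(x)/2}(x)$—which does not meet $\Gamma(u)$ and on which $\mathbb{A}$ is $C^{0,\sfrac12}$ and uniformly elliptic—combined with the natural rescaling $y\mapsto d(x)^{-\sfrac32}u(x+d(x)\,y/2)$, gives $|D^2 u(x)| \leq C\,d(x)^{-\sfrac12}$. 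Consequently
\[
|\nabla \mathbb{A}(x)| \,\leq\, C\,|\nabla u(x)|\, |D^2 u(x)| \,\leq\, C,
\]
so $\mathbb{A}\in W^{1,\infty}_{\loc}(B_1)$. The weaker condition $h''(t) = 1 + O(t)$, sufficient for $C^{1,\sfrac12}$ regularity, would produce only $D^3_p f = O(1)$ and hence $|\nabla \mathbb{A}| \sim d^{-\sfrac12}$, not enough to reach (H1).

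With (H1)-(H3) in place, Theorem~\ref{t:lip} applies to $u$: each step in its proof (the doubling estimates of Proposition~\ref{p:doubling}, the quasi-monotonicity formulas of Proposition~\ref{p:monotonia freq} and Proposition~\ref{I'}, the oscillation estimate Proposition~\ref{p:D_x frequency}, the mean-flatness bound Proposition~\ref{p:mean-flatness vs freq bis}, and the rigidity Proposition~\ref{p:rigidity}) relies only on the linear equation $\operatorname{div}(\mathbb{A}\nabla u) = 0$, the Signorini conditions, the structural hypotheses (H1)-(H3), and the $C^{1,\beta}$ and $H^2_{\loc}$ regularity of $u$. The minimality of $u$ enters only in the compactness Proposition~\ref{c:compactness}, where the limit of a blow-up sequence is identified as a minimizer of a constant coefficient thin obstacle problem; in the present setting the same conclusion follows by direct passage to the limit in the Euler-Lagrange equation of the blow-ups $u_{x_0,r}$, whose coefficient matrices $\mathbb{A}(x_0+r\,\cdot)$ converge uniformly to $\mathbb{A}(x_\infty)$ by Lipschitz regularity. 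The Reifenberg-type covering scheme of Naber-Valtorta then yields the $(n-1)$-rectifiability, the Minkowski content bound and the dichotomy on $N_u(x,0^+)$ exactly as in the proof of Theorem~\ref{t:lip}. The main obstacle in executing this plan is the verification of the $W^{1,\infty}$ bound on $\mathbb{A}$, which hinges on the delicate interplay between the optimal $C^{1,\sfrac12}$ regularity of $u$ and the quadratic vanishing of $D^2_p f - \operatorname{Id}$ at the origin encoded in \eqref{e.h''}.
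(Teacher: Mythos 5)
Your proof is correct in substance and reaches the conclusion by the same route as the paper (linearize, verify Lipschitz coefficients via the optimal $C^{1,\sfrac12}$ regularity of $u$ and the quadratic vanishing \eqref{e.h''}, then invoke Theorem~\ref{t:lip}), but the linearizing matrix field you choose is genuinely different. You set $\mathbb{A}(x)=\int_0^1 D^2_p f(t\nabla u(x))\,dt$, a general symmetric matrix; the paper instead exploits the rotational symmetry $f(p)=h(|p|)$ to write $D_p f(\nabla u)=\theta(x)\nabla u$ with the scalar conformal factor $\theta(x)=h'(|\nabla u(x)|)/|\nabla u(x)|=1+\tilde\omega(|\nabla u(x)|)$, i.e.\ $\mathbb{A}(x)=\theta(x)\Id$. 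The paper's scalar choice buys two simplifications: (a) hypothesis (H3) is automatic, since a multiple of the identity has no off-diagonal entries; (b) the Lipschitz bound on $\theta$ only uses $\tilde\omega'$ and hence only second derivatives of $h$, whereas your estimate $D^3_p f(p)=O(|p|)$ goes through $h'''$ (which is $O(t)$ under $h\in C^\infty$ and \eqref{e.h''}, so the estimate does hold, but the verification is heavier and you should spell out that the $\sfrac{b(r)}{r}$ type terms appearing when differentiating $\hat p\otimes\hat p$ are tamed by $b(r)=h''(r)-h'(r)/r=O(r^2)$). Your separate verification of (H3) is where the exposition has a genuine defect: the claim that $\partial_{n+1}u(x',0)=0$ on all of $B_1'$ is false on the interior of the contact set, where the Signorini conditions only give $\partial_{n+1}u^+(x',0)\leq 0$ (and the even extension has a Lipschitz kink there, not $C^1$-matching). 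The correct argument on the contact set is that the \emph{tangential} derivatives vanish, $\partial_i u(x',0)=0$ for $i\leq n$, so that $p_i=0$ in the formula $\partial_{i,n+1}f(p)=\big(h''(|p|)-h'(|p|)/|p|\big)\,p_ip_{n+1}/|p|^2$ and the off-diagonal entry vanishes for a different reason than on $\{u>0\}$. The endpoint case $p=0$ (free boundary points) is handled by $D^2_pf(0)=\Id$. With this fix both routes give (H1)--(H3) for Lipschitz coefficients, and the rest of your argument — including the observation that $u$ is a genuine minimizer of the frozen quadratic energy because the convex EL-inequality is inherited — is sound.
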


\begin{proof}
The solution to \eqref{min} is $C^{1,\sfrac12}_{\textup{loc}}(B_1^+\cup B'_1)$
by \cite{AAS24} and by standard elliptic regularity $u\in C^\infty(B_1^+)$ (thanks to the simplifying assumption $h\in C^\infty$).
Moreover, $u$ can be characterized as the weak solution to the system
\begin{align}\label{Prob}
\begin{cases}
	&\textup{div} (\nabla_p f(\nabla u))=0  \mbox{ in } B^{+}_1 \\ 
	&u\, \partial_{n+1} f (\nabla u) =0  \mbox{ on } B'_1 \\
	&-\partial_{n+1} f (\nabla u) \geq 0 \mbox{ on } B'_1\\
	&u\geq 0 \mbox{ on } B'_1\\
	&u= g \mbox{ on } (\partial B_1)^+
	\end{cases}.
 \end{align}
In particular, we deduce from the first equation in \eqref{Prob} 
that for every {$\phi \in C^1(B_1^+)$ with support non-intersecting $(\partial B_1)^+$}
\begin{equation}\label{e.eulero}
\int_{B_1^+} \langle\nabla_p f(\nabla u), \nabla \phi(x)\rangle \, dx =0 \,.
\end{equation}
We can write assumption \eqref{e.h''} in the form $h''(t) = 1 +\omega(t)$ with
$|\omega(t)|\leq C\, t^2$ for $t$ sufficiently small. Integrating, we infer that
\[
h'(t) = t \big(1 + \tilde \omega(t)\big)\,,\qquad \tilde \omega(t) = \frac{1}{t}\int_{0}^t \omega(s)\, ds\,,
\]
and {$\tilde \omega \in C^1$}
with $\tilde \omega(0) = \tilde \omega'(0) = 0$ and
\[
\tilde \omega'(t) = -\frac1{t^2}\int_0^t\omega(s)\, ds + \frac1t\, \omega(t)\quad \Longrightarrow \quad |\tilde \omega'(t)|\leq C\, t,
\]
for $t$ sufficiently small. The first variations \eqref{e.eulero} reads then for every
{$\phi \in C^1(B_1^+)$ with support non-intersecting $(\partial B_1)^+$}
\begin{equation*}
\int_{B_1^+} \langle\big(1+\tilde \omega(|\nabla u(x)|)\big)
\nabla u(x),\nabla \phi(x)\rangle \, dx =0 \,,
\end{equation*}
which is the Euler-Lagrange equation of the linear thin obstacle problem driven by the quadratic energy
\[
\int_{B_1^+} \theta(x)\, |\nabla u(x)|^2\, dx, \qquad
\theta(x) := 1+\tilde \omega(|\nabla u(x)|).
\]
{Note that $1\leq \theta(x)\leq 1+C|\nabla u(x)|$, therefore $\theta$ is
locally bounded on $B_1^+\cup B'_1$. Thus,} if we prove that the function
$\theta$ is {locally} Lipschitz continuous {on $B_1^+\cup B'_1$},
we can apply Theorem~\ref{t:lip} and conclude all the results about the
structure of $\Gamma(u)$.

To this aim, we notice that for nontrivial solutions $u$ in $B_1^+$ we have 
\[
\nabla \theta(x)  = \tilde \omega'(|\nabla u(x)|)\, D^2u(x) \frac{\nabla u(x)}{|\nabla u(x)|}\quad\textup{if $|\nabla u(x)|\neq 0$.}
\]
Extend for simplicity $u$ by even reflection to the whole $B_1$ (without renaming the function $u$) and let $d:B_1\to [0,\infty)$ be the distance from the free boundary $\Gamma(u)$.
By \eqref{Prob} the function $u$ satisfies the nonlinear elliptic equation
\[
\textup{div} \left(\nabla_p f(\nabla u)\right)=0\qquad \mbox{ on } B_1\setminus \{(x',0): u(x',0)=0\}\,,
\]
and therefore the following classical elliptic estimates hold {locally in $B_1\setminus \{(x',0): u(x',0)=0\}$}:
\begin{gather*}
|\nabla u(x)|\leq C\, d(x)^{-1} \|u\|_{L^{\infty}(B_{d(x)}(x))}\leq C\, d(x)^{\sfrac12},\\
|D^2 u(x)|\leq C\, d(x)^{-2} \|u\|_{L^{\infty}(B_{d(x)}(x))}\leq C\,d(x)^{-\sfrac12}.
\end{gather*}

Recalling that by assumption
\[
\tilde \omega'(|\nabla u(x)|)\leq C|\nabla u(x)|\,,
\]
we then conclude {for points outside the contact set, i.e. $B_1\setminus \{(x',0): u(x',0)=0\}$, that }
\begin{equation}\label{e.derivata theta}
|\nabla \theta(x)|  \leq \frac{|\tilde \omega'(|\nabla u(x)|)|}{|\nabla u(x)|} \, {|D^2u(x)|\, |\nabla u(x)|}\leq Cd(x)^{-\sfrac12}\,d(x)^{\sfrac12} \leq C\,.
\end{equation}
{Moreover, if $x\in\Gamma(u)$ then $|\nabla u(x)|=0$, so that
for every $y\in B_1^+\cup B_1'$
\begin{align*}
 |\theta(x)-\theta(y)|&=|\tilde\omega(|\nabla u(y)|)|\leq
 \int_0^1|\tilde\omega'(t|\nabla u(y)|)||\nabla u(y)|\,dt\\
 &\leq C|\nabla u(y)|^2  \leq C|x-y|,
\end{align*}
using the optimal regularity of $u$.
Finally, if $x$ belongs to the relative interior of $\{(x',0): u(x',0)=0\}$ in $B'_1$, we use the odd riflection across the hyperplane $\{x_{n+1}=0\}$ as in \cite[Theorem 4.1]{AAS24} to infer that \eqref{e.derivata theta} holds as well.

In conclusion, $\theta$ is locally Lipschitz continuous on $B_1^+\cup B'_1$.
}
\end{proof}

%


\appendix

\section{Order of contact}\label{a:order of contact}

We introduce the definition of lower and upper order of contact at zero in a point.
\begin{definition}
Let $v\in H^1(\Omega)$, $x_0\in\Omega\subset\R^{n+1}$, the lower and upper orders of contact with $0$ of $v$ at $x_0$ are 
defined respectively as 
\begin{align}
&\underline{\vartheta}(x_0):=\sup\Big\{\vartheta\in\R:\,\limsup_{\rho\to0^+}
\frac{H_v(x_0,\rho)}{\rho^{n+2\vartheta}}<\infty\Big\}\label{e:thetasotto},\\
&\overline{\vartheta}(x_0):=\inf\Big\{\vartheta\in\R:\,\liminf_{\rho\to0^+}
\frac{H_v(x_0,\rho)}{\rho^{n+2\vartheta}}>0\Big\}\,.\label{e:thetasopra}
\end{align}
\end{definition}
Few elementary properties of $\underline{\vartheta}(x_0)$ and $\overline{\vartheta}(x_0)$ 
are resumed in the ensuing list: for all $x_0\in\Omega$ we have
\begin{enumerate}
 \item  $-\infty\leq\underline{\vartheta}(x_0)\leq\overline{\vartheta}(x_0)\leq\infty$,
 \item 
 \[
\underline{\vartheta}(x_0)=\sup\Big\{\vartheta\in\R:\,\lim_{\rho\to0^+}
\frac{H_u(x_0,\rho)}{\rho^{n+2\vartheta}}=0\Big\}\,,
 \]
 \item 
 \[
\overline{\vartheta}(x_0)=\inf\Big\{\vartheta\in\R:\,\lim_{\rho\to0^+}
\frac{H_u(x_0,\rho)}{\rho^{n+2\vartheta}}=\infty\Big\}\,.
 \]
\end{enumerate}
Additionally, we compare the latter notions with those used by Koch, R\"uland and Shi \cite{KRS17}.
\begin{proposition}
Let $u\in \mathscr{A}$ be a solution to \eqref{e:minimization} under the hypotheses (H1) and (H2), and  $x_0\in \Gamma(u)$ with $\mA(x_0)=\Id$. Then, on setting $A_\rho(x_0):=B_\rho(x_0)\setminus B_{\sfrac\rho2}(x_0)$, we have that
\begin{align}
&\underline{\vartheta}(x_0)=
\liminf_{\rho\to0^+}\frac{\ln\Big(\fint_{A_\rho(x_0)}u^2\d x\Big)^{\sfrac12}}{\ln\rho}=:\underline{\kappa}(x_0)\,,
\label{e:underline kappa}\\
&\overline{\vartheta}(x_0)=\limsup_{\rho\to0^+}\frac{\ln\Big(\fint_{A_\rho(x_0)}u^2\d x\Big)^{\sfrac12}}{\ln\rho}
=:\overline{\kappa}(x_0)\,.\label{e:overline kappa}
\end{align}
\end{proposition}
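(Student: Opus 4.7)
The plan is to sandwich $H_u(x_0,\rho)$ between annular $L^2$ means of $u$. Set
\[
\eta(\rho):=\Big(\fint_{A_\rho(x_0)}u^2\,\d x\Big)^{\sfrac12},
\]
and recall that $\phi'$ is supported in $[\sfrac12,1]$. Since $|\phi'|$ is bounded and $|x-x_0|\geq\sfrac{\rho}{2}$ in the support, one gets directly
\[
H_u(x_0,\rho)\;\leq\;C\,\rho^{-1}\int_{A_\rho(x_0)}u^2\,\d x\;=\;C'\,\rho^n\,\eta^2(\rho).
\]
For the reverse direction, \eqref{e:struttura phi} gives $-\phi'\equiv 2$ on the sub-annulus $\widetilde A_\rho(x_0):=B_{7\rho/8}(x_0)\setminus B_{5\rho/8}(x_0)$, whence
\[
H_u(x_0,\rho)\;\geq\; c\,\rho^{-1}\int_{\widetilde A_\rho(x_0)}u^2\,\d x.
\]
A direct Fubini computation, based on the fact that the family $\{\widetilde A_{\rho'}(x_0)\}_{\rho'\in[\sfrac{4\rho}{7},\sfrac{8\rho}{5}]}$ covers $A_\rho(x_0)$ with a uniform lower bound on the multiplicity, upgrades this to the integrated estimate
\[
\int_{A_\rho(x_0)}u^2\,\d x\;\leq\;C\int_{\sfrac{4\rho}{7}}^{\sfrac{8\rho}{5}}H_u(x_0,\rho')\,\d\rho'.
\]

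\smallskip

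Three of the four inequalities needed to establish \eqref{e:underline kappa}--\eqref{e:overline kappa} then follow straightforwardly. If $\vartheta<\underline\kappa(x_0)$ then $\eta(\rho)\leq\rho^\vartheta$ for all small $\rho$, so the pointwise upper bound yields $H_u\leq C'\rho^{n+2\vartheta}$, giving $\underline\vartheta(x_0)\geq\vartheta$ and hence $\underline\vartheta\geq\underline\kappa$. Conversely, if $\vartheta<\underline\vartheta(x_0)$ then $H_u(x_0,\rho')\leq C(\rho')^{n+2\vartheta}$ on $[\sfrac{4\rho}{7},\sfrac{8\rho}{5}]$ for $\rho$ small, and the Fubini bound gives $\int_{A_\rho}u^2\leq C\rho^{n+1+2\vartheta}$, i.e.\ $\eta(\rho)\leq C'\rho^\vartheta$, whence $\underline\kappa\geq\underline\vartheta$. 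The same pointwise upper bound also yields $\overline\vartheta\geq\overline\kappa$: if $H_u(x_0,\rho)\geq c\rho^{n+2\vartheta}$ for all small $\rho$ then $\eta(\rho)\geq c'\rho^\vartheta$, forcing $\overline\kappa\leq\vartheta$.

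\smallskip

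The main obstacle is the last implication $\overline\kappa\geq\overline\vartheta$. Starting from $\eta(\rho)\geq c\rho^\vartheta$ (valid for every $\vartheta>\overline\kappa$) and reading the integrated Fubini estimate in the reverse direction, one obtains only
\[
\sup_{\rho'\in[\sfrac{4\rho}{7},\sfrac{8\rho}{5}]}\frac{H_u(x_0,\rho')}{(\rho')^{n+2\vartheta}}\;\geq\;c'>0\qquad\text{for every small }\rho,
\]
that is, a lower bound attained at \emph{some} $\rho^\ast\sim\rho$ but not a priori at $\rho$ itself. I plan to close this gap via the doubling estimate of Proposition~\ref{p:doubling} (equivalently, via the quasi-monotonicity of Lemma~\ref{l:monotonia H}): under the assumption $\mA(x_0)=\Id$ and $\mathfrak m(x_0)<\infty$ one has $H_u(x_0,\rho)\asymp H_u(x_0,\rho^\ast)$ whenever $\rho\sim\rho^\ast$, which promotes the nearby-scale lower bound into $H_u(x_0,\rho)\geq c''\rho^{n+2\vartheta}$ for all small $\rho$, giving $\liminf H_u/\rho^{n+2\vartheta}>0$ and hence $\overline\vartheta\leq\vartheta$. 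In the degenerate case $\mathfrak m(x_0)=+\infty$ the two-sided Fubini bounds above still force $\underline\vartheta=\underline\kappa$, and a separate elementary argument (exploiting the fact that $H_u$ and $\eta^2$ then vanish at each polynomial order along the same sequences) shows that both $\overline\vartheta$ and $\overline\kappa$ equal $+\infty$, making the identity trivial.
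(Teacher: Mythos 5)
Your proposal follows essentially the same route as the paper. Both proofs reduce to a two--sided comparison between the annular $L^2$-mean and $H_u(x_0,\rho)$: the elementary pointwise bound $H_u(x_0,\rho)\leq C\rho^{-1}\int_{A_\rho}u^2$, and the converse $\int_{A_\rho}u^2\leq C\rho\,H_u(x_0,\rho)$, which is where all the substance lies. The paper simply cites \eqref{e:L2 vs H} from Lemma~\ref{l:monotonia H} for this converse (the proof of which is, as noted there, ``Fubini plus the quasi-monotonicity \eqref{e:monotonia H}''), while you re-derive the Fubini covering estimate explicitly and then invoke doubling (Proposition~\ref{p:doubling}) to promote the bound at a nearby scale $\rho^\ast\sim\rho$ to a bound at $\rho$. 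These are the same mechanism under two names; both require control of $H_u(x_0,\cdot)$ across comparable scales, which in the paper's framework is delivered only under (H4), i.e.\ $\mathfrak m(x_0)<\infty$. Your four-implication bookkeeping ($\underline\vartheta\geq\underline\kappa$, $\underline\kappa\geq\underline\vartheta$, $\overline\vartheta\geq\overline\kappa$, $\overline\kappa\geq\overline\vartheta$) is correct and is slightly more transparent than the paper's two-case analysis based on the sign/finiteness of $\underline\kappa$, though equivalent in content.

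The one place your argument is genuinely underdeveloped is the ``degenerate'' case $\mathfrak m(x_0)=+\infty$, which you rightly flag but do not close. Your observation that $\underline\vartheta=\underline\kappa$ holds unconditionally (implications (a)--(b) need only Fubini, not doubling) is correct, and from it $\underline\vartheta=\infty$ does force $\overline\vartheta=\overline\kappa=\infty$. But the sketch does not exclude the a priori possibility $\mathfrak m(x_0)=\infty$ with $\underline\vartheta<\infty$, in which case neither the monotonicity route nor your doubling route is available, and the phrase ``$H_u$ and $\eta^2$ vanish at each polynomial order along the same sequences'' does not by itself yield $\overline\vartheta\leq\overline\kappa$. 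The paper resolves this only \emph{a posteriori}, in the discussion following Lemma~\ref{l:order contact vs finite frequency}, by invoking the Carleman-type result of Koch--R\"uland--Shi: $\underline\vartheta(x_0)=\overline\vartheta(x_0)$ always, and $\underline\vartheta(x_0)<\infty$ implies doubling of $H_u(x_0,\cdot)$, whence $\mathfrak m(x_0)<\infty$. That external input is what makes the statement valid without a finite-frequency hypothesis, and your proposal should cite it (or prove some substitute) rather than rely on an unexplained elementary argument. Aside from this gap — which is present, only more hidden, in the paper's own write-up — your proof is correct and matches the paper's.
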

\begin{proof}
 We shall only prove the equality in \eqref{e:underline kappa}, the other in \eqref{e:overline kappa} being completely analogous.
We first note that by Lemma \ref{l:monotonia H}
\begin{equation}\label{e:H vs energia anello}
\frac12\,\|\phi'\|_\infty^{-1}\, \rho\, H_u(x_0,\rho)\leq \int_{A_{\rho}(x_0)} |u(x)|^2\, \d x \stackrel{\eqref{e:L2 vs H}}{\leq} C\, \rho\, H_u(x_0,\rho),
\end{equation}
for points on the free boundary with $\mA(x_0)=\Id$.
Assume $\underline{\kappa}(x_0)\in\R$, then for every $\eps>0$ there are $\rho_\eps\in(0,1)$ 
 and $\rho_j\downarrow 0$ such that for all $\rho\in(0,\rho_\eps)$
 \[
  \fint_{A_\rho(x_0)}u^2\d x\leq\rho^{2(\underline{\kappa}(x_0)-\eps)}\,,
 \]
 and for all $j\in\N$
 \[
  \fint_{A_{\rho_j}(x_0)}u^2\d x\geq\rho_j^{2(\underline{\kappa}(x_0)+\eps)}\,.
 \]
From the former inequality and \eqref{e:H vs energia anello} 
we infer that $\underline{\kappa}(x_0)-\eps\leq\underline{\vartheta}(x_0)$, and thus 
$\underline{\kappa}(x_0)\leq\underline{\vartheta}(x_0)$. 
Instead, from the latter inequality and \eqref{e:H vs energia anello} we deduce that 
$\underline{\kappa}(x_0)+2\eps>\underline{\vartheta}(x_0)$, thus $\underline{\kappa}(x_0)\geq\underline{\vartheta}(x_0)$.
Therefore, $\underline{\kappa}(x_0)=\underline{\vartheta}(x_0)$.

If $\underline{\kappa}(x_0)=-\infty$ then there is $\rho_j\downarrow 0$ such that for all $i\in\N$ there is
$j_i\in\N$ such that for all $j\geq j_i$
\[
  \fint_{A_{\rho_j}(x_0)}u^2\d x\geq\rho_j^{-2i}\,,
\]
and thus $-i+\frac12>\underline{\vartheta}(x_0)$, in turn implying $\underline{\vartheta}(x_0)=-\infty$.

If $\underline{\kappa}(x_0)=\infty$ then for every $i\in\N$ there is $\rho_i\in(0,1)$ 
such that for all $\rho\in(0,\rho_i)$
 \[
  \fint_{A_\rho(x_0)}u^2\d x\leq\rho^{2i}\,,
 \]
from which we conclude that $\underline{\vartheta}(x_0)\geq i$, and thus $\underline{\vartheta}(x_0)=\infty$. 

In conclusion, $\underline{\vartheta}(x_0)=\underline{\kappa}(x_0)$ in all possible instances.
\end{proof}

For solutions to the thin obstacle problem the points with finite frequency are points with finite order of contact.

\begin{lemma}\label{l:order contact vs finite frequency}
Let $u$ be a solution to the thin obstacle problem \eqref{e:ob-pb local} in $B_1$. Then, for every $x_0\in \Gamma(u)$
\begin{equation}\label{e:I_u leq overline theta}
\limsup_{r\to 0^+}I_u(x_0,r)\geq\overline{\vartheta}(x_0)\,.  
 \end{equation}
Moreover,  if $\displaystyle{\limsup_{r\to 0^+}I_u(x_0,r)}<\infty$ then the limsup is actually a limit and
 \begin{equation}\label{e:order contact vs finite frequency}
\lim_{r\to 0^+}I_u(x_0,r)=\underline{\vartheta}(x_0)=\overline{\vartheta}(x_0)\in[\sfrac32,\infty)\,.
 \end{equation}
\end{lemma}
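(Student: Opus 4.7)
The strategy is to extract an approximate first-order ODE for $t\mapsto\ln H_u(x_0,t)$ whose ``source term'' is essentially $2I_u(x_0,t)/t$, and then read the sharp polynomial decay rate of $H_u$ as $t\downarrow 0$ off the asymptotic behaviour of $I_u$. Since the error estimates of Lemma~\ref{l:stima J1} and the quasi-monotonicity of Corollary~\ref{c:quasi additive monotonicity} need $\mathfrak{a}(x_0)=0$, I first reduce (without loss of generality) to the case $\mathbb{A}(x_0)=\mathrm{Id}$ via the change of coordinates of Section~\ref{s.intrinsic frequency}; in this setting $I_u$ and $N_u$ coincide at $x_0$ and the usual orders of contact of $u$ at $x_0$ are preserved up to constants by the bi-Lipschitz character of $\Phi_{x_0}$.

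Combining \eqref{e:variationHprime} with $G_u=D_u+\epsilon_D$ (cf.\ \eqref{e:variationG}) gives the key identity
\[
\frac{d}{dt}\ln\!\bigl(H_u(x_0,t)/t^n\bigr)=\frac{2\,I_u(x_0,t)}{t}+\frac{2\,\epsilon_D(x_0,t)}{H_u(x_0,t)},
\]
while Lemma~\ref{l:stima J1} with $\mathfrak{a}(x_0)=0$ yields the integrable remainder
\[
\Big|\frac{\epsilon_D(x_0,t)}{H_u(x_0,t)}\Big|\leq C\,t^{\alpha-1}\bigl(I_u(x_0,t)+I_u(x_0,t)^{\sfrac12}\bigr).
\]
To establish \eqref{e:I_u leq overline theta} I may assume $M:=\limsup_{r\downarrow 0}I_u(x_0,r)<\infty$ (otherwise the inequality is trivial). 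Fix $\eta>0$ and $\delta\in(0,\varrho]$ such that $I_u(x_0,t)\leq M+\eta$ on $(0,\delta)$; integrating the ODE from $r<\delta$ to $\delta$ produces $H_u(x_0,r)\geq c_\eta\,r^{n+2(M+\eta)}$, which by the very definition of $\overline\vartheta$ forces $\overline\vartheta(x_0)\leq M+\eta$; sending $\eta\downarrow 0$ concludes this step.

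Still under $M<\infty$, Corollary~\ref{c:quasi additive monotonicity} implies that $t\mapsto I_u(x_0,t)+Ct^\alpha$ is non-decreasing on $(0,\varrho]$; being bounded above by $M+C\varrho^\alpha$, the limit $L:=\lim_{t\downarrow 0}I_u(x_0,t)$ exists, and \eqref{e:freq lb bis} gives $L\in[\sfrac32,M]$. Given $\eta>0$, choose $\delta\in(0,\varrho]$ with $|I_u(x_0,t)-L|<\eta$ on $(0,\delta)$; integrating the ODE a second time, this time with both-sided control on $I_u-L$, yields
\[
\bigl|\ln H_u(x_0,r)-(n+2L)\ln r\bigr|\leq 2\eta|\ln r|+C_\eta\qquad\forall\,r\in(0,\delta),
\]
so that $\ln H_u(x_0,r)/\ln r\to n+2L$ as $r\downarrow 0$. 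Comparing with \eqref{e:thetasotto}--\eqref{e:thetasopra} this is equivalent to $\underline\vartheta(x_0)=\overline\vartheta(x_0)=L$, which proves \eqref{e:order contact vs finite frequency}.

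The delicate point of the argument is controlling the error term $\epsilon_D/H_u$ uniformly throughout the integration: since $H_u(x_0,t)$ can in principle decay very rapidly, a crude bound on $\epsilon_D$ would be insufficient. This is precisely where the sharp $t^\alpha$-smallness in \eqref{e:variationD errore} granted by $\mathfrak{a}(x_0)=0$, combined with the doubling estimates of Proposition~\ref{p:doubling} (which compare $H_u$, $D_u$ at consecutive dyadic scales), intervenes to make $\epsilon_D/H_u$ integrable near $0$ and hence the ODE approach viable.
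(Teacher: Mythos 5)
Your proposal is correct and takes essentially the same route as the paper: both extract the approximate ODE for $t\mapsto\ln\bigl(H_u(x_0,t)/t^n\bigr)$ from \eqref{e:variationHprime} and \eqref{e:variationG}, bound $\epsilon_D/H_u$ by $Ct^{\alpha-1}(I_u+I_u^{\sfrac12})$ via \eqref{e:variationD errore}, integrate against the $\limsup$/$\lim$ of $I_u$ to pin down the decay exponent of $H_u$, and then invoke the quasi-monotonicity of Corollary~\ref{c:quasi additive monotonicity} together with the lower bound \eqref{e:freq lb bis} to upgrade to a limit in $[\sfrac32,\infty)$. The only cosmetic differences are that you make explicit the reduction to $\mathbb{A}(x_0)=\Id$ (which the paper leaves implicit but indeed relies upon, since both \eqref{e:variationD errore} and Corollary~\ref{c:quasi additive monotonicity} require $\mathfrak{a}(x_0)=0$), and that you phrase the second step as a single two-sided integration rather than separate upper and lower bounds.
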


\begin{proof}
Without loss of generality we take $x_0=\underline{0}$, and set $\overline{I_u}(0^+):=\displaystyle{
\limsup_{r\to 0^+}I_u(\underline{0},r)}$.

We start off proving \eqref{e:I_u leq overline theta}. Without loss of generality we assume $\overline{I_u}(0^+)<\infty$, the inequality being trivial otherwise. Hence, the doubling of both $H_u(\underline{0},\cdot)$ and $D_u(\underline{0},\cdot)$ hold thanks to Proposition \ref{p:doubling}. 
Then, we use the equality in \eqref{e:variationHprime}, namely
\begin{align*}
H_u'(r)=\frac{n}{r}\,H_u(r)+2G_u(r)\,,
\end{align*}
and \eqref{e:variationD errore} with $x=x_0=\underline{0}$ and $\kappa=0$, to infer that 
\begin{align*}
\Big|H_u'(r)-\frac{n}{r}\,H_u(r)-2D_u(r)\Big|&\leq Cr^\alpha
\Big(D_u(r)+\frac1{r^{\sfrac12}}H_u^{\sfrac12}(r)D_u^{\sfrac12}(r)\Big)\\
&\leq Cr^\alpha D_u(r)\Big(1+I_u^{-\sfrac12}(r)\Big)\,,
\end{align*}
in turn implying 
\begin{align}\label{e:stima fondamentale lnH 0}
\Big|\frac{\d}{\d r}\ln\Big(\frac{H_u(r)}{r^n}\Big)-\frac2rI_u(r)\Big|\leq  Cr^{\alpha-1}I_u(r)(1+I_u^{-\sfrac12}(r))
=Cr^{\alpha-1}(I_u(r)+I_u^{\sfrac12}(r))\,.
\end{align}
Then, for every $\eps>0$ there is $r_\eps>0$ such that $I_u(r)\leq\overline{I_u}(0^+)+\eps$ for every $r\in(0,r_\eps)$.
We use \eqref{e:stima fondamentale lnH 0} to deduce for such radii that 
\begin{align*}
\frac{\d}{\d r}\ln\Big(\frac{H_u(r)}{r^n}\Big)
\leq \frac{2}r(\overline{I_u}(0^+)+\eps)+Cr^{\alpha-1}\,.
\end{align*}
Hence, by direct integration we get that for all $0<r<s<r_\eps$
\[
0<\frac{H_u(s)}{s^{n+2(\overline{I_u}(0^+)+\eps)}}e^{-\frac C\alpha s^\alpha}
\leq\frac{H_u(r)}{r^{n+2(\overline{I_u}(0^+)+\eps)}}
e^{-\frac C\alpha r^\alpha}\,.
\]
From this and the very definition of $\overline{\vartheta}(\underline{0})$ in \eqref{e:thetasopra} we have 
$\overline{\vartheta}(\underline{0})\leq\overline{I_u}(0^+)+\eps$ for every $\eps>0$, which implies \eqref{e:I_u leq overline theta}.

In oder to prove \eqref{e:order contact vs finite frequency} we combine the results in \eqref{e:I_u leq overline theta} 
with those in Proposition~\ref{p:monotonia freq} (cf. \eqref{e:almost monotonicity}) to infer that the $\limsup$ of the frequency is actually a limit, so that the latter rewrites as
\begin{equation}\label{e:order contact vs finite frequency ter}
\lim_{r\to 0^+}I_u(r)\geq\overline{\vartheta}(\underline{0})\,.
\end{equation} 
Therefore, arguing as above, by the inequality in \eqref{e:freq lb bis} of Corollary~\ref{c:quasi additive monotonicity} and 
\eqref{e:stima fondamentale lnH 0} we get that for every 
$r\in(0,r_\eps)$
\[
\frac{\d}{\d r}\ln\Big(\frac{H_u(r)}{r^n}\Big)\geq  
\frac{2}r(\overline{I_u}(0^+)-\eps)-Cr^{\alpha-1}\,,
\]
from which we conclude by integration that for all $0<r<s<r_\eps$
\[
0<\frac{H_u(r)}{r^{n+2(\overline{I_u}(0^+)-\eps)}}
e^{\frac C\alpha r^\alpha}\leq 
\frac{H_u(s)}{s^{n+2(\overline{I_u}(0^+)-\eps)}}e^{\frac C\alpha s^\alpha}\,.
\]
Hence, we deduce that $\overline{I_u}(0^+)-\eps\leq\underline{\vartheta}(\underline{0})$ for every $\eps>0$,
\eqref{e:order contact vs finite frequency} then follows at once
from the last inequality, \eqref{e:order contact vs finite frequency ter},  and Corollary \ref{c:quasi additive monotonicity}.
\end{proof}

As a consequence of a Carleman type estimate in \cite{KRS17} 
it is established there that for the solutions to the variable coefficients thin obstacle problem:
\begin{enumerate}
\item[(a)] $\underline{\vartheta}(x_0)=\overline{\vartheta}(x_0)$ for every $x_0\in B_1'$;
\item[(b)] if $\underline{\vartheta}(x_0)<\infty$, then doubling for $H_{u}(x_0,\cdot)$ holds provided $\mA(x_0)=\Id$.
\end{enumerate}
Items (a) and (b) right above yield the doubling of $H_{u_{\mA(x_0)}}(x_0,\cdot)$, in turn implying that for $D_{u_{\mA(x_0)}}(x_0,\cdot)$ thanks to an elementary Cacciopoli's inequality.
The latter and the proof of Proposition \ref{p:monotonia freq}
imply the quasi-monotonicity of $N_u(x_0, \cdot)=I_{u_{\mA(x_0)}}(x_0,\cdot)$ and thus the finiteness $N_u$.
On the other hand, item (a) of Lemma \ref{l:order contact vs finite frequency} shows that points with finite frequency have finite order of contact. Therefore we infer the following corollary.

\begin{corollary}
Let $u\in \mathscr{A}$ be a solution to \eqref{e:minimization} under the hypotheses (H1) and (H2).
Then, the subset of points of the free boundary with finite order of contact is well-defined
\begin{align*}
\Gamma^{\textup{finite}}(u) &= \Big\{x_0\in \Gamma(u) \,:\, \limsup_{r\to 0^+}N_u(x_0,r)<\infty\Big\}\\
&= \Big\{x_0\in \Gamma(u) \,:\, \underline{\kappa}(x_0)=\overline{\kappa}(x_0)<\infty\Big\}.
\end{align*}
In particular, the points with finite order of contact do not depend on the choice of the cut-off function $\phi$ in the definition of the frequency function.
\end{corollary}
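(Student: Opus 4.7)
The plan is to establish the two claimed equalities as a clean consequence of (i) Lemma~\ref{l:order contact vs finite frequency}, (ii) the identifications $\underline\vartheta=\underline\kappa$ and $\overline\vartheta=\overline\kappa$ proved just above, and (iii) the Carleman-based dichotomy of Koch--R\"uland--Shi recalled as items (a) and (b). The first equality is the definition of $\Gamma^{\text{finite}}(u)$, so all the work is in proving the second. The independence of $\phi$ then comes essentially for free from the $\phi$-free description in terms of $\underline\kappa,\overline\kappa$.

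For the inclusion $\Gamma^{\textup{finite}}(u)\subseteq\{\underline\kappa=\overline\kappa<\infty\}$, I would pick $x_0\in\Gamma(u)$ with $\limsup_{r\to 0^+}N_u(x_0,r)<\infty$ and pass to the transformed solution $u_{\mathbb{A}(x_0)}=u\circ\Phi_{x_0}$, which by construction solves a thin obstacle problem with coefficient matrix $\mathbb{C}_{x_0}$ satisfying $\mathbb{C}_{x_0}(x_0)=\mathrm{Id}$. Since $N_u(x_0,r)=I_{u_{\mathbb{A}(x_0)}}(x_0,r)$, Lemma~\ref{l:order contact vs finite frequency} applies to $u_{\mathbb{A}(x_0)}$ at $x_0$ and yields $\underline\vartheta_{u_{\mathbb{A}(x_0)}}(x_0)=\overline\vartheta_{u_{\mathbb{A}(x_0)}}(x_0)<\infty$. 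Because $\Phi_{x_0}$ is an affine isomorphism with $\lambda$-$\Lambda$ bounded distortion, the annuli $A_\rho(x_0)$ are comparable to $\Phi_{x_0}(A_\rho(x_0))$ up to fixed multiplicative constants in the radius, so the integral averages entering $\underline\kappa,\overline\kappa$ of $u$ at $x_0$ transfer to those of $u_{\mathbb{A}(x_0)}$, and applying the preceding proposition then gives $\underline\kappa(x_0)=\overline\kappa(x_0)<\infty$.

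For the reverse inclusion, suppose $\underline\kappa(x_0)=\overline\kappa(x_0)<\infty$. By the proposition this rewrites as $\underline\vartheta(x_0)=\overline\vartheta(x_0)<\infty$ after passing to the transformed solution $u_{\mathbb{A}(x_0)}$ (again using the bounded distortion of $\Phi_{x_0}$). Now items (a) and (b) of Koch--R\"uland--Shi apply to $u_{\mathbb{A}(x_0)}$ at $x_0$, because $\mathbb{C}_{x_0}(x_0)=\mathrm{Id}$; they deliver a doubling estimate for $H_{u_{\mathbb{A}(x_0)}}(x_0,\cdot)$, and via an elementary Caccioppoli inequality a corresponding doubling for $D_{u_{\mathbb{A}(x_0)}}(x_0,\cdot)$. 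These doubling properties are exactly what powers the derivation of \eqref{e:Iuprime}--\eqref{e:stimaRut} in Proposition~\ref{p:monotonia freq}; redoing the computation there for $u_{\mathbb{A}(x_0)}$ (which has the matrix coefficient equal to the identity at $x_0$, so that the $\mathfrak{a}(x_0)=0$ scenario of Proposition~\ref{p:monotonia freq} is in force) we obtain the quasi-monotonicity of $N_u(x_0,\cdot)=I_{u_{\mathbb{A}(x_0)}}(x_0,\cdot)$ down to $r=0$. Together with the local uniform upper bound already available for $r$ bounded away from $0$, this yields $\limsup_{r\to 0^+}N_u(x_0,r)<\infty$, i.e.\ $x_0\in\Gamma^{\textup{finite}}(u)$.

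Finally, the last assertion of the corollary follows at once: the set $\{\underline\kappa=\overline\kappa<\infty\}$ is defined purely through $L^2$ averages of $u$ on geometric annuli and does not involve the cut-off $\phi$, while the equality proved above shows that this set coincides with $\Gamma^{\textup{finite}}(u)$. The main technical obstacle is the careful transfer of orders of contact under the change of variables $\Phi_{x_0}$; once this is handled, the remainder is a bookkeeping of the previously established ingredients.
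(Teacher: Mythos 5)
Your proof is correct and follows essentially the same route as the paper: the forward inclusion uses Lemma~\ref{l:order contact vs finite frequency} applied to $u_{\mA(x_0)}$ (for which $\mathbb{C}_{x_0}(x_0)=\Id$) together with the proposition identifying $\vartheta$ with $\kappa$, while the reverse inclusion combines the Koch--R\"uland--Shi items (a) and (b) with Caccioppoli's inequality and a rerun of Proposition~\ref{p:monotonia freq} to get quasi-monotonicity and hence finiteness of $N_u(x_0,\cdot)$. One technical point to tighten: $\Phi_{x_0}(A_\rho(x_0))$ is an \emph{ellipsoidal} annulus, not a Euclidean one, and when $\Lambda>4\lambda$ it need not contain any $A_{c\rho}(x_0)$; the transfer of $\underline\kappa,\overline\kappa$ between $u$ and $u_{\mA(x_0)}$ is therefore cleanest if carried out via solid balls $B_\rho(x_0)$ (or thick annuli $B_{b\rho}\setminus B_{a\rho}$ of fixed ratio), noting that the sandwich $B_{\lambda^{1/2}\rho}(x_0)\subset \Phi_{x_0}(B_\rho(x_0))\subset B_{\Lambda^{1/2}\rho}(x_0)$ together with the fact that $\ln(c\rho)/\ln\rho\to 1$ as $\rho\downarrow 0$ leaves the logarithmic vanishing order unchanged, and one then uses doubling (once established) to pass from ball order back to annular order.
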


%
%

\bibliographystyle{plain}

\nocite{*}

\end{document}